\documentclass[hidelinks,onefignum,onetabnum]{siamart220329}

\usepackage{lipsum}
\usepackage{amsfonts}
\usepackage{graphicx}
\usepackage{epstopdf}
\usepackage{algorithmic}
\ifpdf
  \DeclareGraphicsExtensions{.eps,.pdf,.png,.jpg}
\else
  \DeclareGraphicsExtensions{.eps}
\fi

\newsiamremark{remark}{Remark}
\newsiamremark{Ex}{Example}
\newsiamremark{hypothesis}{Hypothesis}
\crefname{hypothesis}{Hypothesis}{Hypotheses}
\newsiamthm{claim}{Claim}

\headers{Limiter-based fully-discrete entropy stable explicit DG schemes}{}

\title{A limiter-based approach to construct high-order fully-discrete entropy stable explicit DG schemes for hyperbolic conservation laws}

\author{Yuchang Liu\thanks{School of Mathematical Sciences,
         University of Science and Technology of China,
         Hefei, Anhui 230026, P.R. China.
  (\email{lissandra@mail.ustc.edu.cn}).}
\and Wei Guo\thanks{Department of Mathematics and Statistics, Texas Tech University, Lubbock, TX, 70409, USA. 
  (\email{weimath.guo@ttu.edu}).  }
\and Yan Jiang\thanks{ School of Mathematical Sciences,
         University of Science and Technology of China, Hefei, 
         Anhui 230026, P.R. China.  
         (\email{jiangy@ustc.edu.cn}). 
         Research supported by NSFC grant 12271499. }
\and Zheng Sun\thanks{Department of Mathematics, The University of Alabama,
		Tuscaloosa, AL 35487, USA. 
(\email{zsun30@ua.edu}). Research supported by NSF grant DMS-2208391.}
}

\usepackage{amsopn}

\ifpdf
\hypersetup{
	pdftitle={A limiter-based approach to construct high-order fully-discrete entropy stable explicit DG schemes for hyperbolic conservation laws},
    pdfauthor={}
}
\fi

\allowdisplaybreaks

\newtheorem{assumption}{Assumption}[section]
\usepackage{subfigure}

\begin{document}
	
	\maketitle
	
	\begin{abstract}
		This paper presents a class of novel high-order fully-discrete entropy stable (ES) discontinuous Galerkin (DG) schemes with explicit time discretization. The proposed methodology exploits a critical observation from \cite{carlier2023invariant} that the cell averages of  classical DG solutions with forward Euler time stepping satisfy an ``entropy-stable-like'' property. Building on this result, fully-discrete entropy stability is rigorously enforced through a simple Zhang--Shu-type scaling limiter \cite{zhang2010positivity} applied as a post-processing step, without modifying the underlying spatial discretization. Furthermore, the proposed methodology can simultaneously enforce multiple cell entropy inequalities, a capability unavailable in existing ES DG schemes.
		High-order accuracy in time is achieved by using strong-stability-preserving (SSP) multistep methods. Theoretically, we prove that the proposed scheme indeed maintains high-order accuracy and establish a Lax--Wendroff-type theorem guaranteeing that the limit of the numerical solutions, if it exists, satisfies the desired entropy inequality. Extensive numerical tests for scalar equations and systems, including the nonconvex Buckley--Leverett problem and extreme examples of Euler equations, demonstrate optimal accuracy,  enforcement of multiple entropy conditions, and strong robustness.
	\end{abstract}
	
	\begin{keywords}
		High-order accuracy, discontinuous Galerkin method, conservation laws, entropy stability, explicit schemes.
	\end{keywords}
	
	\begin{MSCcodes}
		65M60, 65M12.
	\end{MSCcodes}
	\section{Introduction}
	\label{sec1}
	    
	Hyperbolic conservation laws are a class of partial differential equations describing the evolution of conserved quantities, with wide-ranging applications in physics and engineering. Prominent examples include the inviscid Euler equations in gas dynamics, the ideal magnetohydrodynamics (MHD) equations in plasma physics, and shallow water equations over a flat bottom. Over the past few decades, the rapid development of high-order numerical methods  across various discretization frameworks has enabled the accurate resolution of complex solution structures using relatively coarse meshes. Among these, the discontinuous Galerkin (DG) method, originally introduced by Reed and Hill in 1973 \cite{reed1973triangular} for neutron transport problems, has attracted significant research attention for its local conservation property, high-order accuracy, suitability for parallel computation. Building on this foundation, Cockburn and Shu subsequently developed the Runge–Kutta DG (RKDG) framework \cite{cockburn1989tvb, cockburn1991runge}, which combines DG spatial discretization with strong-stability-preserving Runge–Kutta (SSP-RK) time integration and nonlinear limiters to control spurious oscillations, establishing a cornerstone in numerical solutions of hyperbolic conservation laws.
	
	A well-known property of hyperbolic conservation laws is that even with smooth initial data, solutions may develop discontinuities in finite time, rendering classical smooth solutions invalid. This necessitates the introduction of weak solutions. However, weak solutions are generally not unique, and additional physical principles are therefore required to identify the physically relevant solution. Among these, the entropy condition plays a fundamental role: a weak solution that satisfies the entropy inequality for all admissible entropy pairs is called an \textit{entropy solution}. For scalar conservation laws, entropy solutions are known to be unique \cite{godlewski2013numerical}.

	From the perspective of numerical algorithm design, ensuring a \textit{discrete entropy condition} poses a significant challenge. The monotone schemes and E-schemes are known to satisfy the entropy inequality for any entropy pair \cite{leveque1992numerical,osher1988convergence}. However, both approaches are at best first-order accurate. For high-order numerical schemes, one typically seeks to enforce the entropy inequality only for a prescribed entropy pair. A seminal contribution in this direction was made by Tadmor, who proposed the framework of two-point entropy conservative and entropy stable (ES) fluxes \cite{tadmor1987numerical, tadmor2003entropy}. This technique was subsequently extended by Lefloch \emph{et al.} \cite{lefloch2002fully}, who constructed entropy conservative fluxes with arbitrarily high-order accuracy. These foundational developments have since been widely used in the design of ES finite difference schemes \cite{duan2019high, duan2021high, agnihotri2024second, singh2025entropy}, including the well-known TeCNO method \cite{fjordholm2012arbitrarily}, which utilizes the sign-property of ENO reconstruction.
	
	In the context of DG discretization, there are two principal approaches for achieving discrete entropy stability.
	The first approach, developed primarily within the nodal DG framework, exploits the summation-by-parts (SBP) property of the discrete differentiation operator and employs flux differencing techniques combined with entropy conservative and ES numerical fluxes \cite{chen2017entropy, liu2018entropy, bohm2020entropy, rueda2021entropy, liu2024entropy, liu2025structure, liu2025globally, liu2025entropyMHD}. 
	The second approach, originally proposed by Abgrall \cite{abgrall2018general} in a general discretization setting, seeks to directly control entropy production by introducing carefully designed artificial viscosity terms into the DG formulation \cite{gaburro2023high, liu2024non, chan2025artificial}.
	A comprehensive review of ES DG methods can be found in \cite{chen2020review}.
	To date, entropy stability for both approaches has been rigorously established at the semi-discrete level. Furthermore, fully-discrete entropy stability can be attained by employing implicit time discretizations. For instance, in \cite{lefloch2002fully}, a general framework for high-order ES implicit time-stepping schemes is introduced. Alternatively, space–time discretization techniques can be used to achieve fully-discrete entropy stability \cite{hiltebrand2014entropy, zakerzadeh2017entropy}.
	
	On the other hand, discrete entropy stability for high-order explicit time discretization remains largely an open problem. A class of ES relaxation RK techniques was developed in \cite{ranocha2020relaxation}. In this approach, a relaxation factor is calculated after completing one-step RK time advancement by solving a global nonlinear equation, then the time step is adjusted by multiplying this relaxation factor to achieve fully-discrete global entropy conservation or stability. However,  solving the nonlinear equation at each time step incurs additional computational costs. Moreover, this approach only attains global entropy stability and cannot guarantee a cell entropy inequality, which is much more desirable for both theoretical and practical purposes.
	
	Recently, a class of fully-discrete ES finite difference schemes coupled with forward Euler time discretization was proposed in \cite{kivva2022entropy, kivva2024entropy}. These schemes are formally first-order accurate in time. The central idea relies on the algebraic flux correction (AFC) framework, in which entropy stability is achieved by forming a convex combination of a high-order flux and a suitably chosen low-order ES flux. A key novelty of these developments is the introduction of the concept of a proper numerical entropy flux, which provides a convenient and rigorous mechanism for enforcing a fully-discrete local cell entropy inequality, and thus global entropy stability. The AFC technique has also been extended to continuous finite element methods \cite{kuzmin2022limiter} to construct first-order explicit fully-discrete ES schemes.
	
	In this paper, we present a novel high-order explicit DG framework that achieves fully-discrete 
	local cell entropy inequality and global entropy stability. In \cite{carlier2023invariant}, the authors showed that cell averages of the standard DG solution with forward Euler time stepping satisfy an ``entropy-stable-like" property. Although it does not directly lead to genuine entropy stability, it provides a crucial insight that motivates the present work. In particular, we first reformulate the ``entropy-stable-like" property \cite{carlier2023invariant} by leveraging the concept of proper numerical entropy fluxes \cite{kivva2022entropy, kivva2024entropy}. This reformulation yields a computable upper bound for the cell entropy at the next time level, while guaranteeing that the entropy of the cell average does not exceed this bound.
	Furthermore, in \cite{chen2017entropy}, it is shown that scaling a numerical solution toward its cell average cannot increase the cell entropy. Leveraging this property, we apply a Zhang–Shu-type limiter to rescale the DG solution toward the cell average, enforcing that the limited cell entropy remains below the upper bound. 
	This procedure allows us to rigorously achieve entropy stability at the fully-discrete level without modification of the underlying DG spatial discretization. An additional advantage of this framework is that it naturally allows the simultaneous enforcement of multiple entropy inequalities, each associated with a different entropy pair. This capability distinguishes our approach from existing methods and is particularly beneficial for the simulation of nonconvex hyperbolic conservation laws.
	forward Euler time discretization  can be replaced by SSP multistep methods to achieve high-order accuracy in both space and time \cite{gottlieb2001strong}. Theoretically, we show that under appropriate assumptions, the proposed method indeed maintains high-order accuracy, and the limit of numerical solutions, if it exists, satisfies the desired entropy inequality in the sense of the classical Lax--Wendroff theorem.
	
	In summary, our method has the following key features: 
	\begin{itemize}

        \item[1.] It is fully explicit.
		
		\item[2.] It achieves high-order accuracy in both space and time.
		
		\item[3.] It strictly satisfies fully-discrete local cell entropy inequalities as well as global entropy stability. In particular, cell entropy inequalities can be simultaneously enforced for any finite number of prescribed entropy pairs.
		
		\item[4.] It is simple and non-intrusive: entropy stability is enforced without modifying the semi-discrete DG formulation or introducing nonlinear solves; it is achieved through a conservative cell-wise scaling post-processing step.
		
	\end{itemize}
	
	The rest of the paper is organized as follows. In Section \ref{sec2}, we review the fundamental entropy analysis for hyperbolic conservation laws. Section \ref{sec3} introduces the proposed scheme for scalar conservation laws. Section \ref{sec4} extends the proposed methodology to systems of conservation laws. Section \ref{sec5} provides comprehensive numerical experiments to validate the efficiency and effectiveness of the proposed scheme. Finally, Section \ref{sec6} concludes the paper with a summary and discussion of future research directions.

	\section{Entropy analysis}\label{sec2}
	
	Consider a general $d$-dimensional system of conservation laws
	\begin{equation}\label{eq:hcl-d}
		\frac{\partial\mathbf u}{\partial t}+\sum\limits_{i=1}^d \frac{\partial \mathbf f_i(\mathbf u)}{\partial x_i} = \mathbf 0,\quad \mathbf x\in \mathbb R^d,\ t>0
	\end{equation}
	with initial data $\mathbf u(\mathbf x,0)=\mathbf u_0(\mathbf x)$. Here, $\mathbf u(\mathbf x,t):\mathbb R^d\times \mathbb R^+\to \mathcal D\subset  \mathbb R^p,\  \mathbf u_0(\mathbf x):\mathbb R^d\to\mathcal D$, and the flux functions $\mathbf f_i:\mathcal D\to\mathbb R^p$. A characteristic of equation \eqref{eq:hcl-d} is that, shock waves can form in finite time even from smooth initial data. Hence we have to seek the weak solutions of \eqref{eq:hcl-d}. 
	However, since weak solutions are not unique, an entropy condition is typically imposed to select the ``physically relevant" one.
	
	\begin{definition}[Entropy function]
		A convex function $\mathcal{U}(\mathbf{u}): \mathcal{D} \rightarrow \mathbb{R}$ is called an entropy function for system \eqref{eq:hcl-d} if there exists an entropy flux $$\boldsymbol{\mathcal{F}}(\mathbf{u}) = (\mathcal{F}_1(\mathbf{u}), \dots, \mathcal{F}_d(\mathbf{u})): \mathcal{D} \rightarrow \mathbb{R}^d$$ such that 
		\begin{equation}\label{eq:entropy}
			\mathcal{F}_i'(\mathbf{u}) = \mathcal{U}'(\mathbf{u}) \,\mathbf{f}_i'(\mathbf{u}), \quad i = 1, \dots, d.
		\end{equation}
		We call $(\mathcal U,\boldsymbol{\mathcal F})$ an entropy pair.
	\end{definition}

	Let $\mathbf v=\mathcal U'(\mathbf u)$ denote the \textit{entropy variable}. If we assume $\mathcal U$ is strictly convex, then the mapping $\mathbf u\to \mathbf v$ is one-to-one. We may rewrite \eqref{eq:hcl-d} according to entropy variables:
	\begin{equation}\label{eq:hcl-v}
		\mathbf u'(\mathbf v)\frac{\partial \mathbf v}{\partial t}+\sum\limits_{i=1}^d \mathbf f_i'(\mathbf u)\mathbf u'(\mathbf v)\frac{\partial\mathbf v}{\partial x_i} = \mathbf 0.
	\end{equation}
	Since $\mathcal U$ is strictly convex, $\mathcal U''(\mathbf u)$ is positive definite, and hence $\mathbf u'(\mathbf v)=\big(\mathcal U''(\mathbf u)\big)^{-1}$ is positive definite as well. If the matrices $\mathbf f_i'(\mathbf u)\mathbf u'(\mathbf v)$ are symmetric for all $i$,  \eqref{eq:hcl-v} is called a \textit{symmetrization} of \eqref{eq:hcl-d}, and \eqref{eq:hcl-d} is said to be symmetrizable. A classical result in \cite{godlewski2013numerical} shows that the symmetrizability of \eqref{eq:hcl-d} is equivalent to the existence of an entropy function.
	
	\begin{lemma}[Mock]\label{lem:mock}
		A strictly convex function $\mathcal U$ serves as an entropy function of \eqref{eq:hcl-d} if and only if \eqref{eq:hcl-v} is a symmetrization of \eqref{eq:hcl-d}.
	\end{lemma}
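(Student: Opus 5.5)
The plan is to reduce both directions of the equivalence to one identity relating the entropy-flux compatibility condition \eqref{eq:entropy} to the symmetry of $\mathbf f_i'(\mathbf u)\mathbf u'(\mathbf v)$. Because $\mathcal U$ is strictly convex, $\mathbf v=\mathcal U'(\mathbf u)$ is a diffeomorphism with $\mathbf u'(\mathbf v)=(\mathcal U''(\mathbf u))^{-1}$ symmetric positive definite. This lets us move between the $\mathbf u$ and $\mathbf v$ variables freely. We assume $\mathcal D$ is simply connected (convex suffices), so that closed forms are exact; the argument is carried out for each fixed $i$.

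\emph{($\Rightarrow$).} Suppose $\mathcal F_i$ exists with $\mathcal F_i'(\mathbf u)=\mathbf v^T\mathbf f_i'(\mathbf u)$, viewing gradients as row vectors.
\begin{itemize}
\item Differentiate this identity once more in $\mathbf u$. The Hessian $\mathcal F_i''$ is symmetric, and the derivative of the right-hand side in components is $\sum_k \partial_{u_m}v_k\,\partial_{u_l}f_{i,k}+\sum_k v_k\,\partial^2_{u_lu_m}f_{i,k}$.
\item The second sum is symmetric in $(l,m)$. Hence the first term, $\mathcal U''(\mathbf u)\,\mathbf f_i'(\mathbf u)$, must also be symmetric.
\item Note that $\mathcal U''\mathbf f_i'=\mathbf u'(\mathbf v)^{-1}\mathbf f_i'$. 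Conjugating by the symmetric matrix $\mathbf u'(\mathbf v)$ gives $\mathbf f_i'\mathbf u'(\mathbf v)=\mathbf u'(\mathbf v)\,(\mathcal U''\mathbf f_i')\,\mathbf u'(\mathbf v)$, which is symmetric.
\end{itemize}

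\emph{($\Leftarrow$).} Suppose $\mathbf f_i'(\mathbf u)\mathbf u'(\mathbf v)$ is symmetric. Regard $\mathbf g_i(\mathbf v):=\mathbf f_i(\mathbf u(\mathbf v))$; its Jacobian $\partial\mathbf g_i/\partial\mathbf v=\mathbf f_i'\mathbf u'(\mathbf v)$ is symmetric.
\begin{itemize}
\item Since this Jacobian is symmetric, the 1-form $\mathbf g_i\cdot d\mathbf v$ is closed, so on a simply connected domain there is a potential $\psi_i(\mathbf v)$ with $\nabla_{\mathbf v}\psi_i=\mathbf g_i$.
\item Define $\mathcal F_i:=\mathbf v^T\mathbf g_i(\mathbf v)-\psi_i(\mathbf v)$. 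Then $\nabla_{\mathbf v}\mathcal F_i=\mathbf g_i+(\partial\mathbf g_i/\partial\mathbf v)^T\mathbf v-\mathbf g_i=\mathbf v^T\mathbf f_i'\mathbf u'(\mathbf v)$, using symmetry in the last step.
\item Changing back to $\mathbf u$ by the chain rule with $\partial\mathbf v/\partial\mathbf u=\mathbf u'(\mathbf v)^{-1}$ yields $\mathcal F_i'(\mathbf u)=\mathbf v^T\mathbf f_i'(\mathbf u)$, which is \eqref{eq:entropy}.
\end{itemize}

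The main obstacle is the converse direction, specifically the existence of the potential $\psi_i$. That step needs the Poincaré lemma, so the topological assumption on $\mathcal D$ (equivalently, on its image under $\mathbf v$) must be stated explicitly. The rest is routine index bookkeeping with symmetric matrices, keeping the row/column conventions consistent in \eqref{eq:entropy}.
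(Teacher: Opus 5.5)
Your proof is correct. The paper does not prove this lemma; it quotes it as a classical result and defers to \cite{godlewski2013numerical}, so there is no in-paper argument to compare against. What you wrote is a complete version of the standard Godunov--Mock reasoning behind that citation. In the forward direction you differentiate \eqref{eq:entropy} once more and transfer symmetry of $\mathcal U''\mathbf f_i'$ to $\mathbf f_i'\mathbf u'(\mathbf v)$ through the congruence $\mathbf f_i'\mathbf u'(\mathbf v)=\mathbf u'(\mathbf v)(\mathcal U''\mathbf f_i')\mathbf u'(\mathbf v)$. In the converse you use the flux potential $\psi_i$ with $\nabla_{\mathbf v}\psi_i=\mathbf f_i(\mathbf u(\mathbf v))$ and set $\mathcal F_i=\mathbf v^T\mathbf f_i-\psi_i$, which has the merit of producing the entropy flux explicitly.

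A few minor points.

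In the converse, symmetry is not what justifies the last equality in your computation of $\nabla_{\mathbf v}\mathcal F_i$. The identity $\partial_{\mathbf v}(\mathbf v^T\mathbf g_i-\psi_i)=\mathbf v^T\,\partial\mathbf g_i/\partial\mathbf v$ holds as soon as $\nabla_{\mathbf v}\psi_i=\mathbf g_i$; symmetry is used only to obtain $\psi_i$.

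Your forward direction implicitly needs $\mathbf f_i\in C^2$, both for Schwarz's theorem on $\sum_k v_k\,\partial^2_{u_lu_m}f_{i,k}$ and to know $\mathcal F_i\in C^2$. The paper never states this; it only assumes $\mathcal U\in C^2$. You can drop it and make the two directions mirror each other. Given $\mathcal F_i$, the function $\psi_i(\mathbf v):=\mathbf v^T\mathbf f_i(\mathbf u(\mathbf v))-\mathcal F_i(\mathbf u(\mathbf v))$ satisfies $\nabla_{\mathbf v}\psi_i=\mathbf f_i(\mathbf u(\mathbf v))$ by \eqref{eq:entropy}. Hence $\mathbf f_i'\mathbf u'(\mathbf v)$ is the Hessian of a $C^2$ function and is symmetric, using only $\mathbf f_i\in C^1$. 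This argument is also purely local.

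Like the paper, you read ``strictly convex'' as $\mathcal U''$ positive definite. That is genuinely needed for $\mathbf u'(\mathbf v)$ to exist; for example, it fails for $\mathcal U(u)=u^4$ at $u=0$. Convexity of $\mathcal D$ guarantees, by strict monotonicity of $\nabla\mathcal U$, that $\mathbf v=\mathcal U'(\mathbf u)$ is globally one-to-one. So your parenthetical ``convex suffices'' is the natural standing hypothesis, alongside the simple connectivity you correctly flag for the Poincar\'e lemma.
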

	
	If the system admits an entropy pair, then left-multiplying $\mathcal U'(\mathbf u)$ to \eqref{eq:hcl-d} yields
	\begin{equation}\label{eq:EC}
		\frac{ \partial\mathcal U(\mathbf u)}{\partial t}+\sum\limits_{i=1}^d\frac{\partial \mathcal F_i(\mathbf u)}{\partial x_i} = 0
	\end{equation}
	for smooth solutions; while for discontinuous solutions, the vanishing viscosity method yields
	\begin{equation}\label{eq:ES}
		\frac{ \partial\mathcal U(\mathbf u)}{\partial t}+\sum\limits_{i=1}^d\frac{\partial \mathcal F_i(\mathbf u)}{\partial x_i} \le  0
	\end{equation}
	in the sense of distribution \cite{leveque1992numerical}, i.e., for any smooth test function $\phi\in C_0^{\infty}(\mathbb R^d\times \mathbb R^+)$ with $\phi \ge 0$, there holds
	\begin{equation}\label{eq:ESweak}
		\int_{\mathbb R^+}\int_{\mathbb R^d} \left( \mathcal U(\mathbf u)\phi_t+\boldsymbol{\mathcal F}(\mathbf u)\cdot\nabla\phi\right) \mathrm d\mathbf x\mathrm dt   \ge -\int_{\mathbb R^d}\mathcal U(\mathbf u_0(\mathbf x))\phi(\mathbf x,0)\,\mathrm d\mathbf x.
	\end{equation}
	This is called the \textit{entropy inequality} or \textit{entropy condition}. A solution $\mathbf u$ is called an \textit{entropy solution} if $\mathbf u$ satisfies \eqref{eq:ES} (or \eqref{eq:ESweak}) for any entropy pair. For scalar problems, it is proved that the entropy solution is unique, while the uniqueness is still an open problem for systems. Integrating \eqref{eq:ES} on $\mathbb R^d$ yields
	\begin{equation}\label{eq:ESglobal}
		\frac{\mathrm d}{\mathrm dt}\int_{\mathbb R^d}\mathcal U(\mathbf u)\mathrm d\mathbf x \le 0,
	\end{equation}
	which is called the ES property. That is, the total entropy of an entropy solution is non-increasing in time. In the fully-discrete sense, this corresponds to
	\begin{equation}\label{eq:ESglobal_fully}
		\int_{\mathbb R^d}\mathcal U(\mathbf u(\mathbf x,t+\Delta t))\mathrm d\mathbf x\le \int_{\mathbb R^d}\mathcal U(\mathbf u(\mathbf x,t))\mathrm d\mathbf x,\quad \forall t,\Delta t>0.
	\end{equation}
	In the following, we focus on constructing ES DG methods that approximate the entropy solution and preserve the fully-discrete ES property \eqref{eq:ESglobal_fully}. For clarity of presentation, we restrict our discussion one-dimensional (1D) scalar equations and and systems of conservation laws. The extension to higher dimensions on tensor-product meshes follows in a dimension-by-dimension manner and is therefore straightforward. In particular, we will assume $\mathcal U\in C^2(\mathcal D)$ to facilitate the analysis.

	\section{Fully-discrete explicit entropy stable scheme for scalar problems}
	\label{sec3}
	
	\subsection{DG formulation}
	
	In this section, we consider the 1D scalar conservation law
	\begin{equation}\label{eq:hcl}
		\left\{\begin{array}{l} u_t+f(u)_x=0,\quad x\in\mathbb R,\ t>0,\\ u(x,0)=u_0(x). \end{array}\right.
	\end{equation}
    Let the spatial domain be partitioned into uniform cells 
    $$ \mathbb R=\bigcup\limits_{i\in\mathbb Z} I_i,\quad I_i=[x_{i-1/2},x_{i+1/2}],
    $$
    with mesh size $\Delta x = x_{i+1/2}-x_{i-1/2} $. The cell centers are defined by $x_i = (x_{i+1/2}+x_{i-1/2})/2$. 
	Define the finite element space as
	\begin{equation}
		V_h^k = \{w(x): w(x)|_{I_i}\in \mathbb P^k(I_i),\  \forall i  \}, 
	\end{equation}
	where $\mathbb P^k(I_i)$ is the space of polynomials of degree at most $k$ on cell $I_i$. Then, the semi-discrete DG scheme reads: Find $u_h\in V_h^k$, such that for all $w\in V_h^k$ and  $I_i$,
	\begin{equation}\label{eq:DG_standard}
		\int_{I_i}{\frac{\partial u_h}{\partial t} w \,\mathrm{d}x}
		=\int_{I_i}{f\left( u_h \right) \frac{\partial w}{\partial x}\mathrm{d}x}
		-\hat{f}_{i+1/2}w _{i+1/2}^{-}+\hat{f}_{i-1/2}w _{i-1/2}^{+}.
	\end{equation}
	Here, $\hat f_{i+1/2} = \hat f (u_h|^-_{i+1/2}, u_h|^+_{i+1/2})$ denotes a consistent, Lipschitz continuous, and monotone numerical flux defined at $x_{i+1/2}$. In this work, we employ the Lax--Friedrichs (LF) flux
	\begin{equation}\label{eq:LF} \hat f(u^-,u^+)=\frac{1}{2}(f(u^+)+f(u^-))-\frac{1}{2}\alpha(u^+-u^-),\quad \alpha=\max\limits_{u\in I(u^-,u^+)}\left|f'(u)\right|. \end{equation}
    Here, $I(a,b)$ represents the interval with endpoints $a$ and $b$. The partial derivatives of $\hat f$ with respect to $u^-$ and $u^+$ are denoted by $\hat f_1$ and $\hat f_2$ respectively, with the property $\hat f_1\geq0$ and $\hat f_2\leq0$. Within the method of lines (MOL) framework, the semi-discrete DG discretization \eqref{eq:DG_standard} leads to an ODE system
	\begin{equation}\label{eq:MOL} 
		\frac{\mathrm d u_h}{\mathrm dt} = \mathcal L_h(u_h),
	\end{equation}
	where $\mathcal L_h(u_h)\in V_h^k$. The solution is then advanced in time by employing suitable time discretization methods, such as the forward Euler method, strong-stability-preserving Runge--Kutta (SSP-RK) methods \cite{ketcheson2008highly}, or the SSP multistep methods \cite{gottlieb2001strong}.

	\subsection{Forward Euler ES scheme}
	
	We assume the time domain is also divided to uniform time levels
	$$ 0=t^0< t^1 < \cdots < t^n < \cdots,\quad t^n=n\Delta t. $$
	For illustrative purposes, we first consider forward Euler time discretization. For this case, the fully-discrete scheme is given by
	\begin{equation}\label{eq:EFDG0} 
		u_h^{n+1}=u_h^n+\Delta t \mathcal L_h(u_h^n).
	\end{equation}
	However, it is well-known that the scheme \eqref{eq:EFDG0} does not necessarily satisfy the ES property.  We therefore denote the intermediate solution computed from \eqref{eq:EFDG0} as $u_h^{n+1,(\mathrm{pre})}$, i.e.,
	\begin{equation}\label{eq:EFDG} 
		u_h^{n+1,(\mathrm{pre})}=u_h^n+\Delta t \mathcal L_h(u_h^n).
	\end{equation}
	Below, we introduce a correction step applied to $u_h^{n+1,(\mathrm{pre})}$ to obtain $u_h^{n+1}$, which satisfies the fully-discrete entropy inequality in an appropriate sense. Moreover, to preserve conservation, the correction is required to maintain the cell averages, namely,
	$\bar u_{i}^{n+1}=\bar u_{i}^{n+1,(\mathrm{pre})}$. Here, $$\bar u_i^n:=\displaystyle\frac{1}{\Delta x}\int_{I_i}u_h^n(x)\mathrm dx$$ denotes the cell average of $u_h^n$ on $I_i$.

   We begin by recalling the concept of a proper numerical entropy flux introduced in \cite{kivva2022entropy} for constructing fully-discrete first-order ES schemes.  
	
	\begin{definition}
		A two-point flux function $\hat{\mathcal F}(u^-,u^+)$ associated with numerical flux $\hat f(u^-,u^+)$ is called a proper numerical entropy flux corresponding to entropy pair $(\mathcal U,\mathcal F)$, if it is consistent, i.e., $\hat{\mathcal F}(u,u)=\mathcal F(u)$, and satisfies
		\begin{equation}
			\label{eq:properflux}
			\frac{\partial \hat{\mathcal{F}}(u^-,u^+)}{\partial u^{\pm}}=\mathcal{U}' ( u^{\pm} ) \frac{\partial \hat{f}(u^-,u^+)}{\partial u^{\pm}}.
		\end{equation}
		Hence, a proper numerical entropy flux preserves the relation \eqref{eq:entropy}.
	\end{definition}
	As an example, for the LF flux \eqref{eq:LF}, the corresponding proper numerical entropy flux is given by
	\begin{equation}\label{eq:LFF} \hat {\mathcal F}(u^-,u^+)=\frac{1}{2}(\mathcal F(u^+)+\mathcal F(u^-))-\frac{1}{2}\alpha(\mathcal U(u^+)-\mathcal U(u^-)), 
	\end{equation}
	which satisfies the condition \eqref{eq:properflux}. 
	
	Using the concept of the proper numerical entropy flux, entropy stability can be established for the first-order three-point finite volume scheme
	\begin{equation}\label{eq:1st} \bar u_i^{n+1}=\bar u_i^n-\lambda\left( \hat f(\bar u_i^n, \bar u_{i+1}^n)-\hat f(\bar u_{i-1}^n,\bar u_i^n) \right),\quad \lambda=\Delta t/\Delta x,
	\end{equation}
	as summarized in the following lemma. A proof can be found in \cite{kivva2022entropy}. This result forms the foundation for the generalization to high-order ES DG schemes.
	
	\begin{lemma}\label{lem:ES1st}
		The first-order scheme \eqref{eq:1st} satisfies the ES property
		\begin{equation}\label{eq:ES1st} \mathcal U(\bar u_i^{n+1})\le \mathcal U(\bar u_i^n)-\lambda(\hat{\mathcal F}(\bar u_{i}^n,\bar u_{i+1}^n)-\hat{\mathcal F}(\bar u_{i-1}^n,\bar u_{i}^n)) \end{equation}
		under the CFL condition 
			\begin{equation}\label{eq:CFLBP}
				\lambda\le \hat f_1(\bar u_{i}^n,\bar u_{i+1}^n)-\hat f_2(\bar u_{i-1}^n,\bar u_i^n) ,
			\end{equation} 
			\begin{equation}\label{eq:CFL1st}
				\lambda \le 2\frac{\mathcal{U} '\left( \bar{u}_i^n \right) \left[ \hat{f}\left( \bar{u}_i^n,\bar{u}_{i+1}^n \right) -\hat{f}\left( \bar{u}_{i-1}^n,\bar{u}_i^n \right) \right] -\hat{\mathcal{F}}\left( \bar{u}_i^n,\bar{u}_{i+1}^n \right) +\hat{\mathcal{F}}\left( \bar{u}_{i-1}^n,\bar{u}_i^n \right)}{\left( \max\limits_{u\in [m_i,M_i]} \mathcal{U} ''\left( u \right) \right)\left[ \hat{f}\left( \bar{u}_i^n,\bar{u}_{i+1}^n \right) -\hat{f}\left( \bar{u}_{i-1}^n,\bar{u}_i^n \right) \right] ^2} 
			\end{equation} 
			for each $i$. Here, $m_i=\min\{\bar u_{i-1}^n,\bar u_i^n,\bar u_{i+1}^n\}$ and $M_i=\max\{\bar u_{i-1}^n,\bar u_i^n,\bar u_{i+1}^n\}$.
		\end{lemma}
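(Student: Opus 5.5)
Write $\hat f_{i+1/2}=\hat f(\bar u_i^n,\bar u_{i+1}^n)$, $\hat f_{i-1/2}=\hat f(\bar u_{i-1}^n,\bar u_i^n)$, and similarly $\hat{\mathcal F}_{i\pm1/2}$. Set $\Delta_i=\hat f_{i+1/2}-\hat f_{i-1/2}$, so that $\bar u_i^{n+1}=\bar u_i^n-\lambda\Delta_i$. The plan has three steps: show $\bar u_i^{n+1}\in[m_i,M_i]$, Taylor-expand $\mathcal U$ about $\bar u_i^n$, and compare the first-order term with the entropy-flux difference using the proper entropy flux property \eqref{eq:properflux}.

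\textbf{Step 1 (bound preservation).} The first step uses \eqref{eq:CFLBP}. By the mean value theorem,
\[
\hat f(\bar u_i^n,\bar u_{i+1}^n)-\hat f(\bar u_i^n,\bar u_i^n)=\hat f_2\,(\bar u_{i+1}^n-\bar u_i^n),
\]
and similarly on the left side, where $\hat f(\bar u_i^n,\bar u_i^n)=f(\bar u_i^n)$. This lets us write $\bar u_i^{n+1}$ as a combination of $\bar u_{i-1}^n,\bar u_i^n,\bar u_{i+1}^n$. The coefficients $-\lambda\hat f_2\ge0$ and $\lambda\hat f_1\ge0$ come from monotonicity, and the CFL condition makes the central coefficient nonnegative. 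Hence $\bar u_i^{n+1}\in[m_i,M_i]$.

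One should note that the mean-value derivatives are evaluated at intermediate points, whereas \eqref{eq:CFLBP} is stated at the given arguments. For the LF flux, $\hat f_1$ and $\hat f_2$ are essentially $(f'\pm\alpha)/2$, so I would interpret \eqref{eq:CFLBP} in that averaged sense, or simply take it as sufficient for bound preservation. This interpretation is the technical point I would check carefully.

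\textbf{Step 2 (Taylor expansion).} Taylor's theorem with Lagrange remainder gives
\[
\mathcal U(\bar u_i^{n+1})=\mathcal U(\bar u_i^n)-\lambda\,\mathcal U'(\bar u_i^n)\,\Delta_i+\tfrac12\mathcal U''(\xi)\,\lambda^2\Delta_i^2 ,
\]
with $\xi$ between $\bar u_i^n$ and $\bar u_i^{n+1}$. By Step 1, $\xi\in[m_i,M_i]$, so $\mathcal U''(\xi)\le\max_{[m_i,M_i]}\mathcal U''$.

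\textbf{Step 3 (sign of the entropy numerator).} The target inequality \eqref{eq:ES1st} is equivalent to
\[
\tfrac12\lambda^2\,\mathcal U''(\xi)\,\Delta_i^2\le\lambda\Bigl(\mathcal U'(\bar u_i^n)\Delta_i-(\hat{\mathcal F}_{i+1/2}-\hat{\mathcal F}_{i-1/2})\Bigr).
\]
When $\Delta_i\neq0$, this follows directly from \eqref{eq:CFL1st}, provided the numerator there is nonnegative. The key lemma is therefore that
\[
N_i:=\mathcal U'(\bar u_i^n)\Delta_i-\hat{\mathcal F}_{i+1/2}+\hat{\mathcal F}_{i-1/2}\ge0
\]
unconditionally. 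I would prove this by splitting $N_i=A-B$, where
\[
A=\bigl[\mathcal U'(\bar u_i^n)\hat f_{i+1/2}-\hat{\mathcal F}_{i+1/2}\bigr]-\bigl[\mathcal U'(\bar u_i^n)f(\bar u_i^n)-\mathcal F(\bar u_i^n)\bigr],
\]
and $B$ is the analogous bracket at $i-1/2$.

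For $A$, define $g(s)=\mathcal U'(\bar u_i^n)\hat f(\bar u_i^n,s)-\hat{\mathcal F}(\bar u_i^n,s)$. By \eqref{eq:properflux}, $g'(s)=\bigl(\mathcal U'(\bar u_i^n)-\mathcal U'(s)\bigr)\hat f_2$. Since $\hat f_2\le0$ and $\mathcal U'$ is increasing, $g'(s)$ has the sign of $s-\bar u_i^n$. Hence $g(\bar u_{i+1}^n)\ge g(\bar u_i^n)$, i.e.\ $A\ge0$. Symmetrically, using $\hat f_1\ge0$ in the first argument gives $B\le0$. Therefore $N_i\ge0$.

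The degenerate case $\Delta_i=0$ is trivial: the quadratic term vanishes and $N_i\ge0$ gives the inequality directly. The main obstacle is really Step 1, namely making the CFL bound \eqref{eq:CFLBP} rigorously yield $\bar u_i^{n+1}\in[m_i,M_i]$, so that the $\max\mathcal U''$ over $[m_i,M_i]$ legitimately controls the remainder.
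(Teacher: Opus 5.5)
Your proof is correct and is the argument the paper has in mind. The paper defers the proof to \cite{kivva2022entropy}, but its remarks confirm your structure: \eqref{eq:CFLBP} serves only to guarantee $\bar u_i^{n+1}\in[m_i,M_i]$, so that $\max_{[m_i,M_i]}\mathcal U''$ controls the Taylor remainder, and the numerator of \eqref{eq:CFL1st} is nonnegative by the proper-entropy-flux argument you give in Step~3.

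One thing should be made explicit in Step~1. Your central coefficient $1-\lambda(\hat f_1-\hat f_2)$, with the mean-value slopes, is nonnegative only if $\lambda(\hat f_1-\hat f_2)\le 1$ at those intermediate points. The condition \eqref{eq:CFLBP} as printed, $\lambda\le \hat f_1-\hat f_2$, is dimensionally inconsistent and does not give this. You must therefore read it as that bound-preserving condition, which is exactly the role the paper's remark assigns to it.
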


		\begin{remark}
			
			The CFL condition \eqref{eq:CFLBP} introduced in \cite{zhang2012maximum} is to ensure the cell average at the next step satisfy $\bar u_i^{n+1}\in [m_i,M_i]$. Without this constraint, \eqref{eq:CFL1st} will change to the following form to ensure the entropy stability,
			\begin{equation}\label{eq:CFL1stim}
				\lambda \le 2\frac{\mathcal{U} '\left( \bar{u}_i^n \right) \left[ \hat{f}\left( \bar{u}_i^n,\bar{u}_{i+1}^n \right) -\hat{f}\left( \bar{u}_{i-1}^n,\bar{u}_i^n \right) \right] -\hat{\mathcal{F}}\left( \bar{u}_i^n,\bar{u}_{i+1}^n \right) +\hat{\mathcal{F}}\left( \bar{u}_{i-1}^n,\bar{u}_i^n \right)}{\left(\max\limits_{u\in I(\bar u_i^n,\bar u_i^{n+1})} \mathcal{U} ''\left( u \right)\right)\left[ \hat{f}\left( \bar{u}_i^n,\bar{u}_{i+1}^n \right) -\hat{f}\left( \bar{u}_{i-1}^n,\bar{u}_i^n \right) \right] ^2}. 
			\end{equation} 
			
		\end{remark}

		\begin{remark}
			Note that the right hand side (RHS) of \eqref{eq:CFL1st} will not deviate significantly from the standard CFL condition. For example, consider the linear equation $u_t+u_x=0$ with the square entropy $\mathcal U(u)=u^2/2$. Using the LF flux \eqref{eq:LF} with proper numerical entropy flux \eqref{eq:LFF}, we have
			$$ \mathcal U'(u)=u,\quad \mathcal U''(u)=1,\quad \mathcal F(u)=\frac{u^2}{2},\quad \hat f(u^-,u^+)=u^-,\quad \hat{\mathcal F}(u^-,u^+)=\frac{(u^-)^2}{2}. $$
			By direct computation, the CFL condition   \eqref{eq:CFL1st} reduces to
			$$\lambda \le \frac{2}{1}\cdot \frac{\displaystyle\frac{(\bar{u}_{i}^n)^{2}}{2}-\bar{u}_i^n\bar{u}_{i-1}^n+\frac{(\bar{u}_{i-1}^n)^{2}}{2}}{(\bar{u}_{i}^n)^{2}-2\bar{u}_i^n\bar{u}_{i-1}^n+(\bar{u}_{i-1}^n)^{2}}=1,$$
			which coincides exactly with the standard CFL condition for the linear advection equation. Moreover, it is proved in \cite{kivva2022entropy} that the numerator of \eqref{eq:CFL1st} is non-negative.
		\end{remark}
		
		Setting $w = 1$ in \eqref{eq:DG_standard} and applying forward Euler time discretization yields the following update formula for the cell average
		\begin{equation}\label{eq:Euler}
			\bar u_i^{n+1}=\bar u_i^n-\lambda(\hat f_{i+1/2}-\hat f_{i-1/2}).
		\end{equation}
		Although the high-order DG scheme combined with the forward Euler method \eqref{eq:EFDG} is not ES in general, it was shown in \cite{carlier2023invariant} that, even without any modification, the cell average updating scheme \eqref{eq:Euler} still satisfies an ES-like property. For completeness, we reinterpret this result and provide an alternative proof within our framework. This proof follows the approach of \cite{zhang2010positivity}, relying on Gauss--Lobatto quadrature and the convexity of $\mathcal U$. 
		
		\begin{theorem}\label{thm:ESlike}
			Let $2N-3\ge k$. Denote the Gauss--Lobatto points in cell $I_i$ by $\{\hat x_{i,q}\}_{q=1}^N$, and the quadrature weights in $I=[0,1]$ by $\{\omega_q\}_{q=1}^N$. Then, the Euler-forward DG scheme \eqref{eq:Euler} satisfies the ES-like property
			\begin{equation}\label{eq:ESlike} \mathcal U(\bar u^{n+1}_i)\le \tilde{\mathcal U}_i^n-\lambda( \hat{\mathcal F}_{i+1/2}-\hat{\mathcal F}_{i-1/2} ),\end{equation}
			under the CFL condition 
			\begin{equation}\label{eq:EFCFLBP}
				\lambda\le (\hat f_1)_{i+1/2} - (\hat f_2)_i,\quad \lambda\le (\hat f_1)_i-(\hat f_2)_{i-1/2},
			\end{equation} 
			\begin{equation}\label{eq:EFCFL1}
            \lambda \le \frac{2\omega_N}{\max\limits_{x} \mathcal{U} ''\left( u_h^n \right)}\frac{\mathcal{U}'\left( u_{i+1/2}^{n,-} \right) \left( \hat{f}_{i+1/2}-\hat{f}_i \right) -\hat{\mathcal{F}}_{i+1/2}+\hat{\mathcal{F}}_i}{\left( \hat{f}_{i+1/2}-\hat{f}_i \right) ^2},
			\end{equation}
			\begin{equation}\label{eq:EFCFL2}
				\lambda \le \frac{2\omega_1}{\max\limits_{x} \mathcal{U}''\left( u_h^n \right)}\frac{\mathcal{U}'\left( u_{i-1/2}^{n,+} \right) \left( \hat{f}_i-\hat{f}_{i-1/2} \right) -\hat{\mathcal{F}}_i+\hat{\mathcal{F}}_{i-1/2}}{\left( \hat{f}_i-\hat{f}_{i-1/2} \right) ^2}\end{equation} 
			for all $i$. Here,
			\begin{equation}\label{eq:Ubar} \tilde{\mathcal U}_i^n:=\frac{1}{\Delta x}\int_{I_i}^{\langle N\rangle}\mathcal U(u_h^n(x))\mathrm dx,\quad \hat{\mathcal F}_{i+1/2}:=\hat{\mathcal F}(u_{i+1/2}^{n,-}, u_{i+1/2}^{n,+}), \end{equation}
			and $\hat f_i:=\hat f(u_{i-1/2}^{n,+},u_{i+1/2}^{n,-}),\ \hat{\mathcal F}_i:=\hat{\mathcal F}(u_{i-1/2}^{n,+},u_{i+1/2}^{n,-})$.
				The notation $$\displaystyle\int_{I_i}^{\langle N\rangle} u\,\mathrm dx:=\Delta x\sum\limits_{q=1}^N \omega_qu(\hat x_{i,q})$$ represents the $N$-point Gauss--Lobatto quadrature of $u$ on the cell $I_i$.
			\end{theorem}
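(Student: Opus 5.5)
We follow the Zhang--Shu decomposition \cite{zhang2010positivity}. Write $u_q:=u_h^n(\hat x_{i,q})$, so $u_1=u_{i-1/2}^{n,+}$ and $u_N=u_{i+1/2}^{n,-}$. Since $2N-3\ge k$, the $N$-point Gauss--Lobatto rule is exact for $u_h^n|_{I_i}$, hence $\bar u_i^n=\sum_q\omega_q u_q$. Using $\omega_1=\omega_N$, we add and subtract the interior flux $\hat f_i=\hat f(u_1,u_N)$ in \eqref{eq:Euler} to split the update into three pieces:
\begin{equation*}
\bar u_i^{n+1}=\sum_{q=2}^{N-1}\omega_q u_q+\omega_N H_N+\omega_1 H_1,
\end{equation*}
where
\begin{equation*}
H_N:=u_N-\frac{\lambda}{\omega_N}\big(\hat f_{i+1/2}-\hat f_i\big),\qquad
H_1:=u_1-\frac{\lambda}{\omega_1}\big(\hat f_i-\hat f_{i-1/2}\big).
\end{equation*}
Each $H$ is a first-order three-point update of the form \eqref{eq:1st}, with effective ratio $\lambda/\omega$. $H_N$ has stencil $(u_1,u_N,u_{i+1/2}^{n,+})$ centered at $u_N$, and $H_1$ has stencil $(u_{i-1/2}^{n,-},u_1,u_N)$ centered at $u_1$.

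By convexity of $\mathcal U$ and Jensen's inequality (the weights $\omega_q$ are positive and sum to one),
\begin{equation*}
\mathcal U(\bar u_i^{n+1})\le\sum_{q=2}^{N-1}\omega_q\mathcal U(u_q)+\omega_N\mathcal U(H_N)+\omega_1\mathcal U(H_1).
\end{equation*}
We then apply Lemma \ref{lem:ES1st}, more precisely its proof, to each of $H_N$ and $H_1$ with $\lambda$ replaced by $\lambda/\omega_N$ and $\lambda/\omega_1$. This gives
\begin{equation*}
\mathcal U(H_N)\le\mathcal U(u_N)-\frac{\lambda}{\omega_N}\big(\hat{\mathcal F}_{i+1/2}-\hat{\mathcal F}_i\big),\qquad
\mathcal U(H_1)\le\mathcal U(u_1)-\frac{\lambda}{\omega_1}\big(\hat{\mathcal F}_i-\hat{\mathcal F}_{i-1/2}\big).
\end{equation*}
Multiplying by the weights and summing, the $\hat{\mathcal F}_i$ terms cancel and the entropy terms reassemble into $\tilde{\mathcal U}_i^n$, which yields \eqref{eq:ESlike}.

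The main work is in checking that the hypotheses of Lemma \ref{lem:ES1st} hold in this modified setting, and in the precise form used here.
\begin{itemize}
\item The bound-preserving conditions \eqref{eq:EFCFLBP} keep $H_N$ and $H_1$ inside the range of their stencils, which lie in the range of $u_h^n$. The weight factor must be tracked; presumably \eqref{eq:EFCFLBP} is meant up to the $\omega$ scaling.
\item Once $H$ lies in that range, the curvature $\max\mathcal U''$ over the stencil interval is bounded by $\max_x\mathcal U''(u_h^n)$.
\item We redo the lemma's argument to confirm that \eqref{eq:EFCFL1} and \eqref{eq:EFCFL2} are exactly its CFL condition \eqref{eq:CFL1st} with $\lambda$ replaced by $\lambda/\omega$. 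The argument is a Taylor expansion: $\mathcal U(H)=\mathcal U(u)-\mathcal U'(u)\,\mu\,\Delta\hat f+\tfrac12\mathcal U''(\xi)\,\mu^2(\Delta\hat f)^2$ with $\mu=\lambda/\omega$. The first-order term is then rewritten through $\mathcal U'(u)\Delta\hat f-\Delta\hat{\mathcal F}\ge0$, which is the non-negativity of the numerator proved in \cite{kivva2022entropy} using \eqref{eq:properflux}.
\end{itemize}
If the numerator vanishes, or $\Delta\hat f=0$, the corresponding bound is trivial and that case is handled separately. Apart from these checks, the argument is bookkeeping.
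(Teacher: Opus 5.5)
Your proposal is correct and follows the paper's proof essentially line for line. It uses the same Zhang--Shu splitting $\bar u_i^{n+1}=\sum_{q=2}^{N-1}\omega_q u_{i,q}+\omega_N H_N+\omega_1 H_1$, with $\hat f_i$ added and subtracted, then Jensen's inequality and Lemma~\ref{lem:ES1st} applied to $H_N$ and $H_1$ with $\lambda/\omega_N$ and $\lambda/\omega_1$, after which the $\hat{\mathcal F}_i$ terms cancel. Your caveats are fair: \eqref{eq:EFCFLBP} is missing the $\omega$-scaling, and $\max_x\mathcal U''(u_h^n)$ must be read as a maximum over the interval spanned by the stencil values (across a jump, your literal claim that this interval lies in the range of $u_h^n$ can fail). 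These are imprecisions in the statement that the paper's proof passes over silently, not gaps in your argument; and the cancellation of $\hat f_i$ does not actually require $\omega_1=\omega_N$.
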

			\begin{proof}
				For the cell average, we exactly have 
				$$\bar u_i^n=\frac{1}{\Delta x} \int_{I_i}^{\langle N\rangle} u_h^n\mathrm dx =\displaystyle\sum\limits_{q=1}^N \omega_q u_h^n(\hat x_{i,q}).$$ 
				Denote $u_{i,q}:=u_h^n(\hat x_{i,q})$. Then, we can rewrite \eqref{eq:Euler} as
				\begin{align*}
            \bar{u}_{i}^{n+1}=&\sum_{q=1}^N{\omega _qu_{i,q}}-\lambda \left( \hat{f}\left( u_{i,N},u_{i+1,1} \right) -\hat{f}\left( u_{i-1,N},u_{i,1} \right) \right) 
						\\
						=&\sum_{q=2}^{N-1}{\omega _qu_{i,q}}+\omega _N\left\{ u_{i,N}-\frac{\lambda}{\omega _N}\left( \hat{f}\left( u_{i,N},u_{i+1,1} \right) -\hat{f}\left( u_{i,1},u_{i,N} \right) \right) \right\} 
						\\
						&+\omega _1\left\{ u_{i,1}-\frac{\lambda}{\omega _1}\left( \hat{f}\left( u_{i,1},u_{i,N} \right) -\hat{f}\left( u_{i-1,N},u_{i,1} \right) \right) \right\} 
						\\
						=:&\sum_{q=2}^{N-1}{\omega _qu_{i,q}}+\omega _NH_N+\omega _1H_1.
				\end{align*}
				Utilizing the convixity of $\mathcal U$ and employing Lemma \ref{lem:ES1st} yield
				\begin{align*}
						\mathcal{U} &\left( \bar{u}_{i}^{n+1} \right) =\ \mathcal{U} \left( \omega _NH_N+\omega _1H_1+\sum_{q=2}^{N-1}{\omega _qu_{i,q}} \right) 
						\\
						\le&\  \omega _N\mathcal{U} \left( H_N \right) +\omega _1\mathcal{U} \left( H_1 \right) +\sum_{q=2}^{N-1}{\omega _q\,\mathcal{U} \left( u_{i,q} \right)}
						\\
						\le&\  \omega _N\left\{ \mathcal{U} \left( u_{i,N} \right) -\frac{\lambda}{\omega _N}\left( \hat{\mathcal{F}}\left( u_{i,N},u_{i+1,1} \right) -\hat{\mathcal{F}}\left( u_{i,1},u_{i,N} \right) \right) \right\} 
						\\
						&+\omega _1\left\{ \mathcal{U} \left( u_{i,1} \right) -\frac{\lambda}{\omega _1}\left( \hat{\mathcal{F}}\left( u_{i,1},u_{i,N} \right) -\hat{\mathcal{F}}\left( u_{i-1,N},u_{i,1} \right) \right) \right\} +\sum_{q=2}^{N-1}{\omega _q\,\mathcal{U} \left( u_{i,q} \right)}
						\\
						=&\sum_{q=1}^N{\omega _q\,\mathcal{U} \left( u_{i,q} \right)}-\lambda \left( \hat{\mathcal{F}}_{i+1/2} -\hat{\mathcal{F}}_{i-1/2} \right) .
					\end{align*}
				We complete the proof.
			\end{proof}
			
			However, as pointed out in \cite{carlier2023invariant}, there is still a gap between \eqref{eq:ESlike} and the genuine ES property; that is, there may exist
			\begin{equation}\label{eq:nonES} \tilde{\mathcal U}_i^{n+1,(\mathrm{pre})}> \tilde{\mathcal U}_i^n-\lambda(\hat{\mathcal F}_{i+1/2}-\hat{\mathcal F}_{i-1/2}).
			\end{equation}
			Therefore, we seek a modified solution $u_h^{n+1}$ that satisfies
			\begin{equation}\label{eq:ESreal} \tilde{\mathcal U}_i^{n+1}\le \tilde{\mathcal U}_i^n-\lambda(\hat{\mathcal F}_{i+1/2}-\hat{\mathcal F}_{i-1/2}).
			\end{equation}
			It is shown in \cite{chen2017entropy} that taking a convex combination of the numerical solution and its cell average does not increase the cell entropy. The proof relies solely on the convexity of $\mathcal U$ and can be found therein.
			
			\begin{lemma}\label{lem:ES}
				Let $\theta_i\in [0,1]$. Denote $u_h^{n+1,(\mathrm{mod})}|_{I_i}:=\bar u_i^{n+1}+\theta_i(u_h^{n+1,(\mathrm{pre})}|_{I_i}-\bar u_i^{n+1})$. Then 
				$$ \tilde{\mathcal U}_i^{n+1,(\mathrm{mod})}\le \tilde{\mathcal U}_i^{n+1,(\mathrm{pre})}. $$
			\end{lemma}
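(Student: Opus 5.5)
The argument is pointwise at the Gauss--Lobatto nodes, using only the convexity of $\mathcal U$ and the nonnegativity of the quadrature weights. Write $u_{i,q}^{\mathrm{pre}}:=u_h^{n+1,(\mathrm{pre})}(\hat x_{i,q})$. By the definition of the modified solution, its nodal values are
\[
u_h^{n+1,(\mathrm{mod})}(\hat x_{i,q})=(1-\theta_i)\,\bar u_i^{n+1}+\theta_i\,u_{i,q}^{\mathrm{pre}},\qquad q=1,\dots,N.
\]
Since $\theta_i\in[0,1]$, this is a convex combination of $\bar u_i^{n+1}$ and $u_{i,q}^{\mathrm{pre}}$. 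Convexity of $\mathcal U$ therefore gives, at each node,
\[
\mathcal U\big(u_h^{n+1,(\mathrm{mod})}(\hat x_{i,q})\big)\le (1-\theta_i)\,\mathcal U(\bar u_i^{n+1})+\theta_i\,\mathcal U(u_{i,q}^{\mathrm{pre}}).
\]

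Next we multiply by the Gauss--Lobatto weights $\omega_q>0$ and sum over $q$. Using $\sum_q\omega_q=1$ and the definition \eqref{eq:Ubar} of $\tilde{\mathcal U}$, this yields
\[
\tilde{\mathcal U}_i^{n+1,(\mathrm{mod})}\le (1-\theta_i)\,\mathcal U(\bar u_i^{n+1})+\theta_i\,\tilde{\mathcal U}_i^{n+1,(\mathrm{pre})}.
\]

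It remains to bound $\mathcal U(\bar u_i^{n+1})$ by $\tilde{\mathcal U}_i^{n+1,(\mathrm{pre})}$. The correction is required to preserve cell averages, so $\bar u_i^{n+1}$ is also the cell average of $u_h^{n+1,(\mathrm{pre})}$. Because $u_h^{n+1,(\mathrm{pre})}|_{I_i}\in\mathbb P^k$ and $2N-3\ge k$, the $N$-point Gauss--Lobatto rule integrates it exactly. Hence $\bar u_i^{n+1}=\sum_q\omega_q u_{i,q}^{\mathrm{pre}}$, the same identity used in the proof of Theorem \ref{thm:ESlike}. Jensen's inequality for the discrete probability weights $\omega_q$ then gives
\[
\mathcal U(\bar u_i^{n+1})\le\sum_q\omega_q\,\mathcal U(u_{i,q}^{\mathrm{pre}})=\tilde{\mathcal U}_i^{n+1,(\mathrm{pre})}.
\]
Substituting this into the previous inequality gives $\tilde{\mathcal U}_i^{n+1,(\mathrm{mod})}\le\tilde{\mathcal U}_i^{n+1,(\mathrm{pre})}$, which is the claim.

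The only step that needs care is the exactness of the quadrature for the cell average. This is why the standing hypothesis $2N-3\ge k$ is needed: it ensures the cell average is reproduced by the nodal values, so that Jensen's inequality is applied in its discrete form. Everything else follows directly from convexity.

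The argument also shows that the map $\theta_i\mapsto\tilde{\mathcal U}_i^{n+1,(\mathrm{mod})}$ is convex, with value at most the $\theta_i=0$ endpoint value at $\theta_i=0$ and equal to the $\theta_i=1$ value at $\theta_i=1$. This monotone-type behavior will be useful later when choosing $\theta_i$ to enforce \eqref{eq:ESreal}.
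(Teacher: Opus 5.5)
Your proof is correct, and it is the same convexity argument the paper defers to \cite{chen2017entropy}; the paper also reuses its first step when verifying the explicit choice of $\theta_i$. Nodewise convexity gives $\tilde{\mathcal U}_i^{n+1,(\mathrm{mod})}\le(1-\theta_i)\,\mathcal U(\bar u_i^{n+1})+\theta_i\,\tilde{\mathcal U}_i^{n+1,(\mathrm{pre})}$, and discrete Jensen gives $\mathcal U(\bar u_i^{n+1})\le\tilde{\mathcal U}_i^{n+1,(\mathrm{pre})}$, which is valid because the Gauss--Lobatto rule reproduces the cell average when $2N-3\ge k$.

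One small caveat concerns only your closing aside. Convexity of $\varphi$ together with $\varphi(0)\le\varphi(1)$ does not by itself give monotonicity. What does give it is that $\theta=0$ is a global minimizer of $\varphi$ on $[0,1]$: apply Jensen to the modified solution, whose nodal average is still $\bar u_i^{n+1}$. Equivalently, apply the lemma with $u_h^{n+1,(\mathrm{pre})}$ replaced by the solution already scaled with the larger parameter.
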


			We further define the univariate function 
            \begin{equation} \varphi(\theta):=\displaystyle\frac{1}{\Delta x}\int_{I_i}^{\langle N\rangle} \mathcal U(\bar u_i^{n+1}+\theta(u_h^{n+1,(\mathrm{pre})}-\bar u_i^{n+1}))\mathrm dx .
            \end{equation}
            According to Lemma~\ref{lem:ES}, the function $\varphi$ satisfies the following properties.
			\begin{proposition}
				$\varphi(\theta)$ is a continuous, non-decreasing function on $[0,1]$. Moreover, 
				\begin{equation}
					\varphi(0)=\mathcal U(\bar u_i^{n+1}),\qquad \varphi(1)=\tilde {\mathcal U}_i^{n+1,(\mathrm{pre})}.
				\end{equation}
			\end{proposition}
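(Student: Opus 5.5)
The endpoint values come straight from the definition, and continuity and monotonicity come from convexity of $\varphi$ plus one application of Jensen.

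\textbf{Endpoint values.} At $\theta=0$ the integrand is the constant $\mathcal U(\bar u_i^{n+1})$. Since the Gauss--Lobatto weights satisfy $\sum_q\omega_q=1$, we get $\varphi(0)=\mathcal U(\bar u_i^{n+1})$. At $\theta=1$ the argument is $u_h^{n+1,(\mathrm{pre})}$, so $\varphi(1)=\tilde{\mathcal U}_i^{n+1,(\mathrm{pre})}$ by \eqref{eq:Ubar}.

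\textbf{Continuity.} Write
\[
\varphi(\theta)=\sum_{q=1}^N\omega_q\,\mathcal U\bigl(\bar u_i^{n+1}+\theta d_q\bigr),\qquad d_q:=u_h^{n+1,(\mathrm{pre})}(\hat x_{i,q})-\bar u_i^{n+1}.
\]
This is a finite nonnegative combination of compositions of $\mathcal U\in C^2$ with affine maps of $\theta$. Hence $\varphi$ is continuous, and in fact convex on $[0,1]$. Here we assume, as throughout, that the segment between $\bar u_i^{n+1}$ and each nodal value lies in $\mathcal D$; this holds since $\mathcal D$ is convex.

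\textbf{Monotonicity.} I would compare $\varphi(\theta)$ with $\varphi(0)$ and then use convexity.
\begin{enumerate}
\item Since $u_h^{n+1,(\mathrm{pre})}\in\mathbb P^k$ and $2N-3\ge k$, the quadrature is exact and $\sum_q\omega_q d_q=0$.
\item Jensen's inequality then gives
\[
\varphi(\theta)\ge\mathcal U\Bigl(\sum_q\omega_q(\bar u_i^{n+1}+\theta d_q)\Bigr)=\mathcal U(\bar u_i^{n+1})=\varphi(0)\quad\text{for all }\theta.
\]
\item Take $0\le\theta_1<\theta_2\le1$. Writing $\theta_1=(1-s)\cdot0+s\,\theta_2$ with $s=\theta_1/\theta_2$, convexity gives
\[
\varphi(\theta_1)\le(1-s)\varphi(0)+s\varphi(\theta_2)\le\varphi(\theta_2).
\]
\end{enumerate}
This is the content of Lemma~\ref{lem:ES}, applied with the rescaled function $\bar u_i^{n+1}+\theta_2 d$ in place of the pre-limited solution. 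That substitution is legitimate because the rescaled function has the same cell average.

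The only delicate point is the exactness of the quadrature for the cell average, which forces $\sum_q\omega_q d_q=0$. This requires the condition $2N-3\ge k$ from Theorem~\ref{thm:ESlike}. The rest is routine.
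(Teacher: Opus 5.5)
Your proof is correct and takes essentially the paper's route: the paper obtains the proposition directly from Lemma~\ref{lem:ES}, and your convexity-plus-Jensen argument is exactly that lemma applied to the rescaled function $\bar u_i^{n+1}+\theta_2(u_h^{n+1,(\mathrm{pre})}-\bar u_i^{n+1})$ with ratio $\theta_1/\theta_2$. You also make explicit a point the paper leaves implicit, namely that exactness of the Gauss--Lobatto rule for the cell average ($2N-3\ge k$) is what gives $\varphi(0)\le\varphi(\theta)$.
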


			Therefore, to achieve \eqref{eq:ESreal}, a simple and direct approach is to apply the Zhang--Shu limiter \cite{zhang2010positivity}, which rescales the intermediate solution toward its cell average, i.e., 
			\begin{equation}\label{eq:limiter} 
            u_h^{n+1}|_{I_i}=u_h^{n+1,(\mathrm{mod})}|_{I_i}=\bar u_i^{n+1}+\theta_i\,(u_h^{n+1,(\mathrm{pre})}|_{I_i}-\bar u_i^{n+1}).
			\end{equation}
			The remaining task is to determine a proper scaling parameter $\theta_i\in[0,1]$, such that
			\begin{equation}\label{eq:theta}
				\tilde {\mathcal U}_i^{n+1}\le \mathcal{U} ^{\mathrm{up}},
			\end{equation}
			while maintaining accuracy as much as possible. Here, $\mathcal U^{\mathrm{up}}:=\tilde{\mathcal U}_i^n-\lambda(\hat{\mathcal F}_{i+1/2}-\hat{\mathcal F}_{i-1/2})$ denotes the RHS in \eqref{eq:ESreal}. Importantly, such a $\theta_i$ must exist, because the limiting case with $\theta_i=0$ reduces to \eqref{eq:ESlike}.  In addition, the upper bound $\mathcal{U} ^{\mathrm{up}}$ is explicitly computable.
			
			As noted in \cite{zhang2010positivity}, the optimal value of $\theta_i$ can be obtained by solving the nonlinear equation
			\begin{equation}\label{eq:limiter_max} \frac{1}{\Delta x}\int_{I_i}^{\langle N\rangle}{\mathcal{U} \left( \bar{u}_{i}^{n+1}+\theta_i^{(\mathrm{up})} \left( u_{h}^{n+1,(\mathrm{pre})}-\bar{u}_{i}^{n+1} \right) \right) \mathrm{d}x}= \mathcal{U} ^{\mathrm{up}}.
			\end{equation}
			 However, this will introduce additional computational cost. Instead, we propose an explicit choice of $\theta_i$, given by
	    \begin{equation}
        \label{eq:theta_exp}\theta_i=\min\left\{\frac{\mathcal U^{\mathrm{up}}-\mathcal U^{\mathrm{1st}}}{\mathcal U^{\mathrm{high}}-\mathcal U^{\mathrm{1st}}}, 1\right\},
		\end{equation}
		where
			$$ \mathcal U^{\mathrm{1st}}=\mathcal U(\bar u^{n+1}_i),\quad \mathcal U^{\mathrm{high}}=\tilde{\mathcal U}_i^{n+1,(\mathrm{pre})}. $$
		Note that $\theta_i\ge0$. Indeed, Theorem \ref{thm:ESlike} ensures that $\mathcal U^{\mathrm{up}}\ge \mathcal U^{\mathrm{1st}}$, and Lemma \ref{lem:ES} ensures that $\mathcal U^{\mathrm{high}}\ge \mathcal U^{\mathrm{1st}}$. Moreover, if $\mathcal U^{\mathrm{high}} < \mathcal U^{\mathrm{up}}$, then $\theta_i=1$. In this case, the limiter is inactive on $I_i$, and hence  $u_h^{n+1}|_{I_i}=u_h^{n+1,(\mathrm{pre})}|_{I_i}$.

			The following theorem shows that the choice of $\theta_i$ introduced in \eqref{eq:theta_exp} satisfies the requirement \eqref{eq:theta}. 
			
			\begin{theorem}
				The ES limiter \eqref{eq:limiter} with $\theta_i$ given by \eqref{eq:theta_exp} satisfies \eqref{eq:theta}.  
			\end{theorem}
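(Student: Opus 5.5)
The argument is a convexity bound on $\varphi$, plus a check of the trivial case.

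First, when $\theta_i=1$ we have $\mathcal U^{\mathrm{high}}\le\mathcal U^{\mathrm{up}}$. The limiter is then inactive, so $\tilde{\mathcal U}_i^{n+1}=\mathcal U^{\mathrm{high}}\le\mathcal U^{\mathrm{up}}$ and \eqref{eq:theta} holds. The degenerate case $\mathcal U^{\mathrm{high}}=\mathcal U^{\mathrm{1st}}$ is handled by reading the quotient as $+\infty$, so $\theta_i=1$. Since $\mathcal U^{\mathrm{up}}\ge\mathcal U^{\mathrm{1st}}$ by Theorem \ref{thm:ESlike}, this case again gives $\mathcal U^{\mathrm{high}}=\mathcal U^{\mathrm{1st}}\le\mathcal U^{\mathrm{up}}$.

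Otherwise $\theta_i=(\mathcal U^{\mathrm{up}}-\mathcal U^{\mathrm{1st}})/(\mathcal U^{\mathrm{high}}-\mathcal U^{\mathrm{1st}})\in[0,1)$. As noted before the statement, nonnegativity follows from Theorem \ref{thm:ESlike} and Lemma \ref{lem:ES}. The key observation is that $\varphi(\theta)$ is convex in $\theta$. Indeed, $\varphi$ is a positive-weighted sum over the Gauss--Lobatto nodes, $\varphi(\theta)=\sum_q\omega_q\,\mathcal U\bigl(\bar u_i^{n+1}+\theta(u_{h}^{n+1,(\mathrm{pre})}(\hat x_{i,q})-\bar u_i^{n+1})\bigr)$. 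Each summand is the convex function $\mathcal U$ composed with an affine map of $\theta$, and all weights satisfy $\omega_q>0$. Writing $\theta_i=(1-\theta_i)\cdot0+\theta_i\cdot1$, convexity gives
\begin{equation*}
\tilde{\mathcal U}_i^{n+1}=\varphi(\theta_i)\le(1-\theta_i)\varphi(0)+\theta_i\varphi(1)=\mathcal U^{\mathrm{1st}}+\theta_i\bigl(\mathcal U^{\mathrm{high}}-\mathcal U^{\mathrm{1st}}\bigr)=\mathcal U^{\mathrm{up}}.
\end{equation*}
Here $\varphi(0)$ and $\varphi(1)$ are taken from the Proposition, and the last equality is the definition \eqref{eq:theta_exp}. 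This proves \eqref{eq:theta}.

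The only step needing care is the convexity of $\varphi$ and the positivity of the quadrature weights. That is where the proof actually uses the Gauss--Lobatto structure; the monotonicity in the Proposition alone would not be enough. Everything else is bookkeeping, because the explicit $\theta_i$ is exactly the chord (secant) interpolation between $\varphi(0)$ and $\varphi(1)$, which bounds the convex $\varphi$ from above on $[0,1]$.
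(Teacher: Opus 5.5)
Your proof is correct and follows the paper's argument. The paper likewise applies convexity of $\mathcal U$ at each node, $\mathcal U(\bar u_i^{n+1}+\theta_i(u_h^{n+1,(\mathrm{pre})}-\bar u_i^{n+1}))\le\theta_i\,\mathcal U(u_h^{n+1,(\mathrm{pre})})+(1-\theta_i)\,\mathcal U(\bar u_i^{n+1})$, and sums with the positive Gauss--Lobatto weights, which is exactly your chord bound for the convex function $\varphi$; your explicit treatment of $\theta_i=1$ and $\mathcal U^{\mathrm{high}}=\mathcal U^{\mathrm{1st}}$ fills in what the paper dismisses with ``without loss of generality.''
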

			\begin{proof}
				Without loss of generality, we assume $\theta_i<1$. Utilizing the convexity of $\mathcal U$, it is straightforward to verify that
				\begin{equation*}\begin{aligned}
						\tilde {\mathcal U}_i^{n+1} &= \frac{1}{\Delta x} \int_{I_i}^{\langle N\rangle}{\mathcal{U} \left( \bar{u}_{i}^{n+1}+\theta _i\left( u_{h}^{n+1,(\mathrm{pre})}-\bar{u}_{i}^{n+1} \right) \right) \mathrm{d}x}
						\\
						&\le \theta _i\frac{1}{\Delta x}\int_{I_i}^{\langle N\rangle}{\mathcal{U} \left( u_{h}^{n+1,(\mathrm{pre})} \right) \mathrm{d}x}+\left( 1-\theta _i \right) \mathcal{U} \left( \bar{u}_{i}^{n+1} \right) 
						\\
						&=\theta _i\,\mathcal{U} ^{\mathrm{high}}+\left( 1-\theta _i \right) \mathcal{U} ^{\mathrm{1st}}=\mathcal{U} ^{\mathrm{up}},
					\end{aligned}
				\end{equation*}
				i.e. $u_h^{n+1}$ satisfies \eqref{eq:theta}.
			\end{proof}
			
			\begin{remark}
				When $u_h^n$ is close to a constant in cell $I_i$, division by zero may occur in formula \eqref{eq:theta_exp}. In the implementation, we introduce a truncation mechanism: If $\left|\mathcal U^{\mathrm{high}}-\mathcal U^{\mathrm{1st}}\right|< \varepsilon$, then we set $\theta_i=1$. Here, $\varepsilon$ is a small positive number close to machine precision. In this work, it is set to $10^{-14}$. 
			\end{remark}
			\begin{remark}
				Evidently, the original forward Euler DG scheme satisfies the entropy inequality in cell $I_i$ if and only if $\theta_i=1$. In other words, if the ES limiter is activated on $I_i$, then the original DG scheme must violate the entropy inequality locally on $I_i$ in the sense of  \eqref{eq:nonES}.
			\end{remark}
			
			Moreover, to preserve accuracy, $\theta_i$ is expected to be as close to 1 as possible. In fact, when $\theta_i<1$, we have
			$$ 1-\theta_i=\frac{\mathcal U^{\mathrm{high}}-\mathcal U^{\mathrm{up}}}{\mathcal U^{\mathrm{high}}-\mathcal U^{\mathrm{1st}}}. $$
            Therefore, the magnitude of $1-\theta_i$ is essentially determined by $\mathcal U^{\mathrm{high}} - \mathcal U^{\mathrm{up}}$.
			 For simplicity, we assume that $u_h^n$ and $\mathcal L_h(u_h^n)$ are sufficiently accurate approximations of $u(x,t^n)$ and $\partial_t u(x,t^n)$, respectively. Under this assumption, we will show that
            $\mathcal U^{\mathrm{high}}-\mathcal U^{\mathrm{up}}$ is a reasonably small term, so that its effect on the accuracy of $u_h^{n+1}$ can be tolerated. Due to the strong assumptions we have made, we acknowledge that the arguments below do not constitute a rigorous error estimate. However, they shed light on the effectiveness of the proposed limiting approach.
			\begin{theorem}\label{thm:accEuler}
				Suppose $u$ and $\mathcal U$ are both smooth functions. For the numerical solution of \eqref{eq:EFDG}, under the following assumptions:
					\begin{enumerate}
						\item $\partial_x^qu_h^n(x)-\partial_x^qu(x,t^n)=\mathcal O(\Delta x^{k+1-q}),\quad q=0,\cdots,k$;
						\item $\partial_x^q\mathcal L_h(u_h^n)(x)-\partial_x^q\partial_tu(x,t^n)=\mathcal O(\Delta x^{k+1-q}), \quad q=0,\cdots,k$; 
						\item $ \mathcal U(u_h^{n+1,(\mathrm{pre})}(x))-\mathcal U(u_h^n(x))=\Delta t\cdot\mathcal U'(u_h^n(x))\mathcal L_h(u_h^n)(x) + \mathcal O(\Delta t^2) $;
					\end{enumerate}
					we have the estimate
				\begin{equation}
                \label{eq:accUhigh}
					\mathcal U^{\mathrm{high}}-\mathcal U^{\mathrm{up}}=\mathcal O\left(\Delta x ^{2k+1}\Delta t+\Delta x^{2N-2}\Delta t+\Delta t^2\right).
				\end{equation}
			\end{theorem}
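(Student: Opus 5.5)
We compute the difference directly using assumption 3. By definition,
\begin{equation*}
\mathcal U^{\mathrm{high}}-\mathcal U^{\mathrm{up}}
=\frac{1}{\Delta x}\int_{I_i}^{\langle N\rangle}\Big(\mathcal U(u_h^{n+1,(\mathrm{pre})})-\mathcal U(u_h^n)\Big)\mathrm dx+\lambda\big(\hat{\mathcal F}_{i+1/2}-\hat{\mathcal F}_{i-1/2}\big).
\end{equation*}
Applying assumption 3 pointwise at the Gauss--Lobatto nodes gives
\begin{equation*}
\mathcal U^{\mathrm{high}}-\mathcal U^{\mathrm{up}}=\Delta t\left[\frac{1}{\Delta x}\int_{I_i}^{\langle N\rangle}\mathcal U'(u_h^n)\mathcal L_h(u_h^n)\,\mathrm dx+\frac{1}{\Delta x}\big(\hat{\mathcal F}_{i+1/2}-\hat{\mathcal F}_{i-1/2}\big)\right]+\mathcal O(\Delta t^2).
\end{equation*}
It therefore remains to show that the bracket, call it $R_i$, is $\mathcal O(\Delta x^{2k+1}+\Delta x^{2N-2})$.

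\textbf{Step 1: replace the quadrature by the exact integral.} We write
\begin{equation*}
\frac{1}{\Delta x}\int_{I_i}^{\langle N\rangle}\mathcal U'(u_h^n)\mathcal L_h(u_h^n)\,\mathrm dx=\frac{1}{\Delta x}\int_{I_i}\mathcal U'(u_h^n)\mathcal L_h(u_h^n)\,\mathrm dx+E_q .
\end{equation*}
The $N$-point Gauss--Lobatto rule is exact for polynomials of degree $2N-3$. Its error for a smooth integrand $g$ is therefore bounded, after dividing by $\Delta x$, by $C\Delta x^{2N-2}\|\partial_x^{2N-2}g\|_\infty$. By assumptions 1--2, the derivatives of $u_h^n$ and $\mathcal L_h(u_h^n)$ within a cell stay bounded (up to order $k$; higher derivatives vanish). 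Hence $E_q=\mathcal O(\Delta x^{2N-2})$.

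\textbf{Step 2: use the Galerkin orthogonality of $\mathcal L_h$.} Let $\Pi$ be the $L^2$ projection onto $\mathbb P^k(I_i)$. Taking $w=\Pi\,\mathcal U'(u_h^n)$ in \eqref{eq:DG_standard}, we split the exact integral as
\begin{equation*}
\int_{I_i}\mathcal L_h(u_h^n)\,\mathcal U'(u_h^n)\,\mathrm dx=\int_{I_i}\mathcal L_h(u_h^n)\,w\,\mathrm dx+\int_{I_i}\mathcal L_h(u_h^n)\big(\mathcal U'(u_h^n)-w\big)\mathrm dx .
\end{equation*}
The second term equals $\int_{I_i}(\mathcal L_h-\Pi_0\cdots)$-type remainders: since $\mathcal U'(u_h^n)-w$ is orthogonal to $\mathbb P^k$, we may subtract from $\mathcal L_h(u_h^n)$ any polynomial. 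We choose the projection of $\partial_t u$. Then one factor is $\mathcal O(\Delta x^{k+1})$ by assumption 2, and the other is $\mathcal O(\Delta x^{k+1})$ by standard projection error. After dividing by $\Delta x$, this term contributes $\mathcal O(\Delta x^{2k+2})$.

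The first term is handled by the DG identity:
\begin{equation*}
\int_{I_i}f(u_h)w_x\,\mathrm dx-\hat f_{i+1/2}w^-_{i+1/2}+\hat f_{i-1/2}w^+_{i-1/2}.
\end{equation*}
We replace $w$ by $v:=\mathcal U'(u_h^n)$, incurring errors $w-v=\mathcal O(\Delta x^{k+1})$ and $w_x-v_x=\mathcal O(\Delta x^{k})$. Integrating by parts with $v$, using $f'(u)\mathcal U'(u)=\mathcal F'(u)$, gives $\int_{I_i} f(u_h)v_x\,\mathrm dx = [f v-\mathcal F]$ evaluated at the interior traces. The remaining combination is then
\begin{equation*}
\big(\hat{\mathcal F}_{i+1/2}-\mathcal F(u^-)-v^-(\hat f_{i+1/2}-f(u^-))\big)-(\text{same at } i-1/2 \text{ with } u^+).
\end{equation*}
By consistency and the differentiability of the proper entropy flux \eqref{eq:properflux}, each such term is $\mathcal O\big((u^+-u^-)^2\big)$, hence $\mathcal O(\Delta x^{2k+2})$ because the jumps are $\mathcal O(\Delta x^{k+1})$.

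\textbf{Main obstacle: the projection-replacement error.} The error from replacing $w$ by $v$ must not degrade to $\mathcal O(\Delta x^{k})$. To control it, we exploit the fact that the same replacement error appears paired with the flux integral. We first subtract the exact-solution identity $\int_{I_i}\big(\partial_t u\,(v-w)-f(u)(v-w)_x\big)\,\mathrm dx+\text{boundary terms}=0$, obtained from the PDE by integration by parts. The difference then involves $f(u_h)-f(u)=\mathcal O(\Delta x^{k+1})$ times $(v-w)_x=\mathcal O(\Delta x^k)$, plus the analogous boundary terms. After dividing by $\Delta x$, this yields $\mathcal O(\Delta x^{2k+1})$, which matches the claimed rate.

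Collecting Steps 1 and 2, $R_i=\mathcal O(\Delta x^{2k+1}+\Delta x^{2N-2})$. Multiplying by $\Delta t$ and adding the $\mathcal O(\Delta t^2)$ term gives \eqref{eq:accUhigh}.
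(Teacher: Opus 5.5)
Your proof is correct and follows the paper's skeleton: assumption 3 reduces the difference to the semi-discrete residual, the Gauss--Lobatto error is $\mathcal O(\Delta x^{2N-1})$, the DG form is tested with $w=\Pi\,\mathcal U'(u_h^n)$, and the proper-flux identity bounds the interface combination by a squared jump. Where you differ is in handling the mismatch between $w$ and $v=\mathcal U'(u_h^n)$.

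The paper keeps $w$ in the DG form, integrates by parts, and swaps $w$ for $v$ only in the volume term. There orthogonality gives $\int_{I_i}(v-\Pi v)(f(u_h)_x-\Pi f(u_h)_x)\,\mathrm dx=\mathcal O(\Delta x^{2k+3})$. The trace mismatch $v(u^-_{i+1/2})-w^-_{i+1/2}=\mathcal O(\Delta x^{k+1})$ is carried into the interface estimate. There it is integrated against $\hat f_2$ over an interval of length $|u^+_{i+1/2}-u^-_{i+1/2}|=\mathcal O(\Delta x^{k+1})$.

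You instead swap $w$ for $v$ in the whole weak form and control the swap error by subtracting the exact solution's weak form tested with $v-w$. This uses the PDE for $u$ and assumption 1 once more, through $f(u_h)-f(u)$ and $\hat f-f(u)$. It gives only $\mathcal O(\Delta x^{2k+2})$ for the volume part, which suffices because the interface terms are of that size anyway.

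Three small points.
\begin{itemize}
\item The subtraction leaves a term $\int_{I_i}\partial_t u\,(v-w)\,\mathrm dx$ that you never account for. It is harmless: it is $\mathcal O(\Delta x^{2k+3})$ by orthogonality.
\item Your ``second term'' $\int_{I_i}\mathcal L_h(u_h^n)(v-w)\,\mathrm dx$ is identically zero, since $\mathcal L_h(u_h^n)\in V_h^k$; this exact identity is the paper's first equality.
\item Integrating your swap error by parts once turns it directly into interface terms like $(\hat f_{i+1/2}-f^-_{i+1/2})(v-w)^-_{i+1/2}$ plus $\int_{I_i}f(u_h)_x(v-w)\,\mathrm dx$. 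That is exactly the paper's bookkeeping, so the detour through the exact solution is not needed.
\end{itemize}
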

			\begin{proof}

                 We have \begin{equation}\label{eq:accUEF1new}\begin{aligned}
						\frac{\Delta x}{\Delta t}&\left( \mathcal{U} ^{\mathrm{high}}-\mathcal{U} ^{\mathrm{up}} \right) \\=&\,\frac{1}{\Delta t}\left( \int_{I_i}^{\langle N\rangle}{\mathcal U(u_h^{n+1,(\mathrm{pre})})\mathrm{d}x}-\int_{I_i}^{\langle N\rangle}{\mathcal U(u_h^{n})\mathrm{d}x} \right) +\hat{\mathcal F}_{i+1/2}^n-\hat{\mathcal F}_{i-1/2}^n
						\\
						=&\,\int_{I_i}^{\langle N\rangle}\mathcal L_h(u_h^n)\,\mathcal U'(u_h^n)\,\mathrm dx +\hat{\mathcal F}_{i+1/2}^n-\hat{\mathcal F}_{i-1/2}^n+\mathcal O(\Delta x\Delta t).
					\end{aligned}
				\end{equation}
                Note that $u_h^n$ and $\mathcal{L}_h(u_h^n)$ have bounded derivatives. Using the quadrature error of Gauss--Lobatto rule, we get
                \begin{equation}\label{eq:FEDG-entropy-error}
                \begin{aligned}
                    \frac{\Delta x}{\Delta t}&\left( \mathcal{U} ^{\mathrm{high}}-\mathcal{U} ^{\mathrm{up}} \right)\\
                    = &\,\int_{I_i}\mathcal L_h(u_h^n)\,\mathcal U'(u_h^n)\,\mathrm dx +\hat{\mathcal F}_{i+1/2}^n-\hat{\mathcal F}_{i-1/2}^n+\mathcal O(\Delta x^{2N-1} + \Delta x\Delta t).
                    \end{aligned}
                \end{equation}
				Let $\Pi_h$ denote the $L^2$ projection operator to $\mathbb P^k$ on each cell. For a smooth function $w\in W^{k+1,\infty}(I_i)$, the standard estimate gives that
					$$ \left|w-\Pi_h w\right|\le C_i\Delta x^{k+1},\quad  \forall x\in I_i $$
					where $C_i$ depends on $w$ and its derivatives on $I_i$. Hence,   Denote the projection of entropy variable as $v_h^n=\Pi_hv(u_h^n)=\Pi_h(\mathcal U'(u_h^n))$, utilizing the smoothness of $\mathcal U$ and the boundedness of $\partial_x^qu_h$, we can find a uniform $C$ independent of $i$ and  $\Delta x$, such that 
					$$ \left|v(u_h^n)-v_h^n\right|\le C\Delta x^{k+1},\quad \forall x\in \mathbb R. $$
				 Omit the superscript $n$, then the term $\displaystyle \int_{I_i}\mathcal L_h(u_h^n)\,\mathcal U'(u_h^n)\,\mathrm dx$ can be written as				\begin{equation}\label{eq:UaccF}\begin{aligned}\int_{I_i}&{\mathcal L_h(u_h)\,\mathcal U'\left( u_{h} \right) \mathrm{d}x}=\int_{I_i}{\mathcal L_h(u_h) v_{h}\mathrm{d}x}
						\\
						=&\,\int_{I_i}{f\left( u_{h} \right) \left( v_{h} \right) _x\mathrm{d}x}-\hat{f}_{i+1/2}v_{i+1/2}^{-}+\hat{f}_{i-1/2}v_{i-1/2}^{+}
						\\
						=&\,-\int_{I_i}{v_{h}f\left( u_{h} \right) _x\mathrm{d}x}+f_{i+1/2}^{-}v_{i+1/2}^{-}-f_{i-1/2}^{+}v_{i-1/2}^{+}-\hat{f}_{i+1/2}v_{i+1/2}^{-}+\hat{f}_{i-1/2}v_{i-1/2}^{+}
						\\
						=&\,-\int_{I_i}{\mathcal{F} \left( u_{h} \right) _x\mathrm{d}x}+\int_{I_i}{\left( v\left( u_{h}\right)-\Pi_h v(u_h) \right)  \left( f\left( u_{h}\right) _x-\Pi _hf\left( u_{h} \right) _x \right)}\mathrm{d}x
						\\
						&+f_{i+1/2}^{-}v_{i+1/2}^{-}-f_{i-1/2}^{+}v_{i-1/2}^{+}-\hat{f}_{i+1/2}v_{i+1/2}^{-}+\hat{f}_{i-1/2}v_{i-1/2}^{+}
						\\
						=&\,-\mathcal{F} _{i+1/2}^{-}+\mathcal{F} _{i-1/2}^{+}+f_{i+1/2}^{-}v_{i+1/2}^{-}-f_{i-1/2}^{+}v_{i-1/2}^{+}-\hat{f}_{i+1/2}v_{i+1/2}^{-}+\hat{f}_{i-1/2}v_{i-1/2}^{+}
						\\
						&+\mathcal O(\Delta x^{2k+3}).
				\end{aligned}\end{equation}

				We remark here that $v_{i\pm 1/2}^{\pm}$ represents the value of $v_h$ but not $v(u_h)$. Combining \eqref{eq:accUEF1new} and \eqref{eq:UaccF}, we get
				\begin{align*}\frac{\Delta x}{\Delta t}\left( \mathcal U^{\mathrm{high}}-\mathcal U^{\mathrm{up}} \right) =&\,\left( \hat{\mathcal{F}}_{i+1/2}-\mathcal{F} _{i+1/2}^{-}-\left( \hat{f}_{i+1/2}-f_{i+1/2}^{-} \right) v_{i+1/2}^{-} \right) 
					\\
					&-\left( \hat{\mathcal{F}}_{i-1/2}-\mathcal{F} _{i-1/2}^{+}-\left( \hat{f}_{i-1/2}-f_{i-1/2}^{+} \right) v_{i-1/2}^{+} \right) 
					\\
					&+\mathcal{O} \left( \Delta x^{2k+3}+\Delta x^{2N-1}+\Delta x\Delta t \right). 
				\end{align*}
				For the first term, utilizing the property of proper numerical entropy flux \eqref{eq:properflux}, we have
				\begin{align*}\hat{\mathcal{F}}_{i+1/2}-\mathcal{F} _{i+1/2}^{-}&=\hat{\mathcal{F}}\left( u_{i+1/2}^{-},u_{i+1/2}^{+} \right) -\hat{\mathcal{F}}\left( u_{i+1/2}^{-},u_{i+1/2}^{-} \right) 
					\\
					&=\int_{u_{i+1/2}^{-}}^{u_{i+1/2}^{+}}{\hat{\mathcal{F}}_2\left( u_{i+1/2}^{-},\xi \right) \mathrm{d}\xi}
					\\
					&=\int_{u_{i+1/2}^{-}}^{u_{i+1/2}^{+}}{v\left( \xi \right)\hat{f}_2\left( u_{i+1/2}^{-},\xi \right) \mathrm{d}\xi},
				\end{align*}
				and 
				$$\left( \hat{f}_{i+1/2}-f_{i+1/2}^{-} \right) v_{i+1/2}^{-} =\int_{u_{i+1/2}^{-}}^{u_{i+1/2}^{+}}{v_{i+1/2}^{-}\hat{f}_2\left( u_{i+1/2}^{-},\xi \right) \mathrm{d}\xi}.$$
				Hence, we can get the estimate
				\begin{align*}
						&\left| \hat{\mathcal{F}}_{i+1/2}-\mathcal{F} _{i+1/2}^{-}-\left( \hat{f}_{i+1/2}-f_{i+1/2}^{-} \right) v_{i+1/2}^{-} \right|
						\\
						&=\left| \int_{u_{i+1/2}^{-}}^{u_{i+1/2}^{+}}{\left( v\left( \xi \right) -v_{i+1/2}^{-} \right) \hat{f}_2\left( u_{i+1/2}^{-},\xi \right) \mathrm{d}\xi} \right|
						\\
						&\le \left|\int_{u_{i+1/2}^{-}}^{u_{i+1/2}^{+}}{\left( \left| v\left( \xi \right) -v\left( u_{i+1/2}^{-} \right) \right|+\left| v\left( u_{i+1/2}^{-} \right) -v_{i+1/2}^{-} \right| \right) \left| \hat{f}_2\left( u_{i+1/2}^{-},\xi \right) \right|\mathrm{d}\xi}\right|
						\\
						&\le \left|\int_{u_{i+1/2}^{-}}^{u_{i+1/2}^{+}}{\left( \left| v'\left( \zeta \right) \right|\left| \xi -u_{i+1/2}^{-} \right|+\left| v\left( u_{i+1/2}^{-} \right) -v_{i+1/2}^{-} \right| \right) \left| \hat{f}_2\left( u_{i+1/2}^{-},\xi \right) \right|\mathrm{d}\xi}\right|
						\\
						&\le C\Delta x^{k+1}\left| u_{i+1/2}^{+}-u_{i+1/2}^{-} \right| \le C\Delta x^{2k+2}.
					\end{align*}
				Similarly, $\hat{\mathcal{F}}_{i-1/2}-\mathcal{F} _{i-1/2}^{+}-\left( \hat{f}_{i-1/2}-f_{i-1/2}^{+} \right) v_{i-1/2}^{+} =\mathcal O(\Delta x^{2k+2}) $. Finally, we have
				\begin{equation*}
					\begin{aligned}\mathcal{U} ^{\mathrm{high}}-\mathcal{U} ^{\mathrm{up}}&=(\Delta t/\Delta x)\cdot \mathcal{O} \left( \Delta x^{2k+2}+\Delta x^{2k+3}+\Delta x^{2N-1}+\Delta x\Delta t \right) 
						\\
						&=\mathcal{O} \left( \Delta x^{2k+1}\Delta t+\Delta x^{2N-2}\Delta t+\Delta t^2 \right). 
					\end{aligned}
				\end{equation*}
                We complete the proof.
			\end{proof}

			\begin{remark}
				It follows from the proof that $\mathcal U^{\mathrm{high}}-\mathcal U^{\mathrm{up}}$ is of high order in space; consequently, the overall accuracy is dominated by the time discretization error. To address this, higher-order time discretization can be used, which will be introduced in the next section.  The preservation of high-order spatial accuracy is also analyzed in \cite{chan2025artificial}. Moreover, from \eqref{eq:accUhigh}, we know that choosing $N\ge k+2$ will not affect the spatial accuracy.
			\end{remark}

            \begin{remark}
            In practice, the error of the numerical solution for the forward Euler DG scheme satisfies $u - u_h^n = \mathcal{O}(\Delta x^{k+1} + \Delta t)$. Here, to highlight the spatial accuracy of the limiter, we do not take the temporal error at $t = t^n$ into account. This also allows us to reuse the estimate of the right-hand side in \eqref{eq:FEDG-entropy-error} when proving the higher-order cases.
            \end{remark}

			Then, we can establish the global ES property by the following theorem.
			
			\begin{theorem}
				The forward Euler DG scheme \eqref{eq:EFDG}, equipped with the ES limiter \eqref{eq:limiter}, is ES in the sense that
				\begin{equation}\label{eq:ESEF}
					\int_{\mathbb R}^{\langle N\rangle}\mathcal U\left(u_h^{n+1}\right)\mathrm dx\le \int_{\mathbb R}^{\langle N\rangle}\mathcal U(u_h^n)\mathrm dx.
				\end{equation}
				Here, $\displaystyle\int_{\mathbb R}^{\langle N\rangle}u\,\mathrm dx:=\Delta x\sum\limits_{i}\int_{I_i}^{\langle N\rangle} u\,\mathrm dx$.
			\end{theorem}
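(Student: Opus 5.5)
The plan is to sum the local cell entropy inequality \eqref{eq:theta}, which the limiter enforces on every cell, over all $i$, and then use telescoping of the numerical entropy fluxes. By the preceding theorem, for each $i$ the limited solution satisfies
\begin{equation*}
\tilde{\mathcal U}_i^{n+1}\le \mathcal U^{\mathrm{up}}=\tilde{\mathcal U}_i^n-\lambda\big(\hat{\mathcal F}_{i+1/2}-\hat{\mathcal F}_{i-1/2}\big).
\end{equation*}
Recall from \eqref{eq:Ubar} that $\tilde{\mathcal U}_i^n=\frac{1}{\Delta x}\int_{I_i}^{\langle N\rangle}\mathcal U(u_h^n)\,\mathrm dx$. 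Multiplying by $\Delta x$ and summing over $i\in\mathbb Z$ gives $\int_{\mathbb R}^{\langle N\rangle}\mathcal U(u_h^{n+1})\,\mathrm dx$ on the left. On the right we get $\int_{\mathbb R}^{\langle N\rangle}\mathcal U(u_h^n)\,\mathrm dx$ minus $\Delta t\sum_i(\hat{\mathcal F}_{i+1/2}-\hat{\mathcal F}_{i-1/2})$.

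The key point is that each interface flux $\hat{\mathcal F}_{i+1/2}=\hat{\mathcal F}(u_{i+1/2}^{n,-},u_{i+1/2}^{n,+})$ is single-valued. It appears with a plus sign in cell $I_i$ and a minus sign in cell $I_{i+1}$, so the sum telescopes. This requires that all cells satisfy the CFL conditions \eqref{eq:EFCFLBP}--\eqref{eq:EFCFL2}, which Theorem~\ref{thm:ESlike} needs to guarantee $\mathcal U^{\mathrm{up}}\ge\mathcal U^{\mathrm{1st}}$, so that $\theta_i\in[0,1]$ is well defined.

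The only real obstacle is justifying the telescoping on the unbounded domain $\mathbb R$. I will handle it by assuming either periodic boundary conditions, or compactly supported data. In the latter case I would take $u_h$ equal to a constant state $c$ outside a bounded set and normalize the entropy pair so that $\mathcal U(c)=\mathcal F(c)=0$. Then only finitely many terms are nonzero, and $\hat{\mathcal F}_{i\pm1/2}\to\mathcal F(c)$ at the far interfaces, so the boundary contributions cancel or vanish. With the sum of flux differences equal to zero, \eqref{eq:ESEF} follows.
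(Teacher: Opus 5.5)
Your proposal is correct and follows the paper's own proof: you sum the cell entropy inequality \eqref{eq:ESreal} enforced by the limiter over all cells, and the single-valued interface entropy fluxes $\hat{\mathcal F}_{i+1/2}$ telescope away. Your extra remarks make explicit what the paper leaves implicit: the CFL conditions of Theorem~\ref{thm:ESlike} are needed so that $\theta_i\in[0,1]$, and the telescoping on $\mathbb R$ is justified for periodic or eventually constant data after normalizing the entropy pair, which leaves $\theta_i$ unchanged.
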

			\begin{proof}
				Since the limited solution $u_h^{n+1,(\mathrm{mod})}$ satisfies \eqref{eq:ESreal}, summing over $i$ yields
				$$\sum\limits_{i}\tilde{\mathcal U}_i^{n+1}\le \sum\limits_i\tilde {\mathcal U}_i^n-\lambda(\hat{\mathcal F}_{i+1/2}-\hat{\mathcal F}_{i-1/2})=\sum\limits_{i}\tilde{\mathcal U}_i^n,$$
				which implies \eqref{eq:ESEF}.
			\end{proof}

			As is well known, the Lax–Wendroff theorem is a fundamental theoretical result stating that, under suitable assumptions, the solution of a conservative numerical scheme converges to a weak solution of the underlying conservation law \cite{leveque1992numerical, shi2018local}.
Similarly, for our fully discrete ES scheme, it can be shown under appropriate assumptions that, if the numerical solution converges, then its limit satisfies the entropy inequality \eqref{eq:ESweak}. Here, the notation $B_i=\{x:\left|x-x_i\right|\le c\Delta x\}$ with $c>1$ is defined in \cite{shi2018local}, and we can take $c=1.5$ for uniform mesh, i.e. $B_i=I_{i-1}\cup I_i\cup I_{i+1}$.
			
			\begin{assumption}[Convergence]\label{assu:conv}
				For the initial condition, the numerical solution satisfies
				\begin{equation}\label{eq:convergeic}
					\int_{\mathbb R}(u_h^0(x)-u_0(x))\phi(x)\mathrm dx\to 0\quad \mathrm{as}\ \  \Delta x\to 0,
				\end{equation}
				for any $\phi\in C_0^\infty(\mathbb R)$. And $u_h$ converges boundedly
				to a function $u^\star(x,t)$ in the sense of
				\begin{equation}\label{eq:convergeu}
					\sum\limits_{n}\int_{t^n}^{t^{n+1}}\int_{\mathbb R}(u_h^{n+1}(x)-u^\star(x,t))\phi(x,t)\mathrm dx \mathrm dt\to 0\quad \mathrm{as}\ \  \Delta x,\Delta t\to 0
				\end{equation}
				for any $\phi\in C_0^\infty(\mathbb R\times\mathbb R^+)$.
			\end{assumption}
			
			\begin{assumption}[TVB-like property]\label{assu:tvd}
				The numerical solution satisfies
				\begin{equation}\label{eq:tvd}
					\sup\limits_{n}\Delta x\sum\limits_{i} \max\limits_{x\in B_i}\left|u_h^n(x)-u_h^n(x_i)\right|\to 0\quad \mathrm{as}\ \ \Delta x,\Delta t\to 0.
				\end{equation}
			\end{assumption}
            
			We remark here that Assumption \ref{assu:tvd} can be derived from the TVB property \cite{shi2018local}.  Then, the Lax–Wendroff-type theorem can be established as follows; the proof is given in Appendix \ref{app:LW1}.
				
				\begin{theorem}\label{thm:LW1}
					Suppose the numerical solution computed by \eqref{eq:EFDG} with ES limiter \eqref{eq:limiter} satisfies Assumption \ref{assu:conv} and \ref{assu:tvd}. For any smooth test function $\phi\in C_0^\infty(\mathbb R\times\mathbb R^+)$ with $\phi \ge 0$, the limit solution $u^\star$ satisfies the entropy inequality in the sense of distributions
					\begin{equation}\label{eq:ESweak1}
						\int_{\mathbb R^+}\int_{\mathbb R} \left(\mathcal U(u^\star)\phi_t+\mathcal F(u^\star)\phi_x\right) \mathrm dx\mathrm dt\ge -\int_{\mathbb R}\mathcal U(u_0(x))\phi(x,0)\,\mathrm dx.
					\end{equation}
				\end{theorem}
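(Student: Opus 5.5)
The argument follows the classical Lax--Wendroff proof, applied to the discrete cell entropy inequality \eqref{eq:ESreal} rather than to the conservation form. Since $\theta_i$ is chosen by \eqref{eq:theta_exp}, the limited solution satisfies, for every $i$ and $n$,
\[
\tilde{\mathcal U}_i^{n+1}-\tilde{\mathcal U}_i^{n}+\lambda\bigl(\hat{\mathcal F}_{i+1/2}^n-\hat{\mathcal F}_{i-1/2}^n\bigr)\le 0 .
\]
Fix $\phi\ge0$ in $C_0^\infty(\mathbb R\times\mathbb R^+)$ and set $\phi_i^n:=\phi(x_i,t^n)\ge0$. I will multiply the inequality by $\Delta x\,\phi_i^n$ and sum over $i$ and $n\ge0$; the sign of $\phi_i^n$ preserves the inequality. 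Summation by parts in $n$ and in $i$ (the sums are finite because of compact support) then gives
\[
\sum_{n\ge1}\sum_i \Delta x\Delta t\,\tilde{\mathcal U}_i^{n}\frac{\phi_i^{n}-\phi_i^{n-1}}{\Delta t}
+\sum_{n}\sum_i\Delta x\Delta t\,\hat{\mathcal F}^n_{i+1/2}\frac{\phi_{i+1}^n-\phi_i^n}{\Delta x}
\ge -\sum_i\Delta x\,\tilde{\mathcal U}_i^0\phi_i^0 .
\]
It then remains to pass to the limit in each of the three terms.

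\textbf{Initial term.} Here $\Delta x\,\tilde{\mathcal U}_i^0$ is the Gauss--Lobatto quadrature of $\mathcal U(u_h^0)$, and I need it to converge to $\int\mathcal U(u_0)\phi(x,0)\,\mathrm dx$. The assumption \eqref{eq:convergeic} only gives weak convergence, so this step requires either strong convergence of $u_h^0$, which holds for the $L^2$ projection of smooth or $BV$ data, or the TVB-like bound at $n=0$. I will argue as follows. Assumption \ref{assu:tvd} lets me replace $u_h^0$ on $I_i$ by the constant $u_h^0(x_i)$ with $o(1)$ total error, using the local Lipschitz continuity of $\mathcal U$ on the bounded range. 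The quadrature of a constant is exact, and $\phi_i^0$ may be replaced by $\phi$ up to $\mathcal O(\Delta x)$.

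\textbf{Time term.} I use the same device. Because $u_h$ is uniformly bounded and $\mathcal U\in C^2$, I have $|\tilde{\mathcal U}_i^n-\mathcal U(u_h^n(x_i))|\le C\max_{B_i}|u_h^n-u_h^n(x_i)|$. The same bound holds for $\frac1{\Delta x}\int_{I_i}\mathcal U(u_h^n)\,\mathrm dx$. By \eqref{eq:tvd}, these replacements cost $o(1)$ after multiplication by the bounded difference quotients of $\phi$ and summation over the finitely many time levels times $\Delta t$. The discrete differences of $\phi$ become $\phi_t$ and $\phi_x$ with $\mathcal O(\Delta x+\Delta t)$ errors. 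What remains is $\iint\mathcal U(u_h)\phi_t$.

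\textbf{Flux term and the main obstacle.} The numerical entropy flux is $\hat{\mathcal F}_{i+1/2}=\hat{\mathcal F}(u_{i+1/2}^-,u_{i+1/2}^+)$, with both arguments lying in $B_i$. Consistency and Lipschitz continuity of $\hat{\mathcal F}$ (true for \eqref{eq:LFF} on bounded sets) give $|\hat{\mathcal F}_{i+1/2}-\mathcal F(u_h^n(x_i))|\le C\max_{B_i}|u_h^n-u_h^n(x_i)|$, which is again controlled by \eqref{eq:tvd}. The hardest step is the final passage from $\iint\mathcal U(u_h)\phi_t+\mathcal F(u_h)\phi_x$ to the same expression with $u^\star$. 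This passage is nonlinear, so the weak convergence in \eqref{eq:convergeu} alone is not enough. My approach is to interpret "converges boundedly" as convergence that is bounded and in $L^1_{\mathrm{loc}}$, as in the classical Lax--Wendroff theorem, which also follows from the TVB-like compactness. Dominated convergence then handles $\mathcal U(u_h)$ and $\mathcal F(u_h)$. I also need to account for the index shift between $u_h^{n+1}$ in \eqref{eq:convergeu} and $u_h^n$ in the sum, which costs only $\mathcal O(\Delta t)$ because of the smoothness of $\phi$. Combining the three limits yields \eqref{eq:ESweak1}.
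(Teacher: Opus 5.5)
Your proposal is correct and follows essentially the same route as the paper: the limiter gives the cell entropy inequality, you multiply by $\phi_i^n\ge 0$, sum by parts, and use Lipschitz bounds of the type \eqref{eq:bndUbar}--\eqref{eq:bndF} together with \eqref{eq:tvd} to replace $\tilde{\mathcal U}_i^n$ and $\hat{\mathcal F}^n_{i\pm1/2}$ by point values before passing to the limit; your summation by parts in $x$ is only a tidier form of the paper's $T_{31}$--$T_{33}$ splitting. Your explicit reading of ``converges boundedly'' as bounded $L^1_{\mathrm{loc}}$ (or a.e.) convergence of both $u_h$ and $u_h^0$ is exactly what the paper needs without saying so for Corollary \ref{cor:conv}, which does not follow from the weak-form Assumption \ref{assu:conv} by the mean-value theorem; however, \eqref{eq:tvd} alone gives neither this compactness nor the initial-term limit (data oscillating on the scale $\sqrt{\Delta x}$ satisfy \eqref{eq:tvd} yet converge only weakly), so you should drop the ``or the TVB-like bound at $n=0$'' alternative and finish the initial term with strong convergence of $u_h^0$.
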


			\subsection{High-order ES scheme}
			
			The forward Euler scheme achieves only first-order accuracy in time. To achieve high-order accuracy in time, we can employ the following $m$-step SSP multistep method:
			\begin{equation}\label{eq:MS}
				u_h^{n+1,(\mathrm{pre})}=\sum_{l=1}^m{\left( \alpha _lu_h^{n+1-l}+\Delta t\beta _l\mathcal{L}_h^l \left( u_h^{n+1-l} \right) \right)}
			\end{equation}
			with $\alpha_l\ge 0$, where
			$$
			\mathcal L_h^l=\left\{ \begin{array}{ll}
				\mathcal L_h ,& \beta_l\ge 0,\\	
				\tilde{\mathcal L}_h ,& \beta_l < 0.\\	
			\end{array}\right.
			$$
			Here, $\tilde {\mathcal L}_h $ denotes the associated 
			semi-discrete operator corresponding to stepping backward in time. The order of \eqref{eq:MS} is denoted by $r$. That is, for a smooth function $w(t)\in C^{r+1}$, we have
			\begin{equation}\label{eq:multitrun} w(t^{n+1})=\sum\limits_{l=1}^m \left(\alpha_lw(t^{n+1-l})+\Delta t\beta_lw'(t^{n+1-l})\right) + \mathcal O(\Delta t^{r+1}). 
			\end{equation}
			For the $\mathbb{P}^k$ approximation, we employ a high-order multistep method \eqref{eq:MS} for time discretization to achieve uniformly high-order accuracy in both space and time. For example, a 6-step, 4th-order SSP multistep method is given by \cite{gottlieb2001strong}
			\begin{equation}\label{eq:MS4}
				\begin{aligned}
					u_{h}^{n+1,(\mathrm{pre})}=&\,\frac{747}{1280}u_{h}^{n}+\frac{237}{128}\Delta t\mathcal{L} _h\left( u_{h}^{n} \right) +\frac{81}{256}u_{h}^{n-4}+\frac{165}{128}\Delta t\mathcal{L} _h\left( u_{h}^{n-4} \right) 
					\\&+\frac{1}{10}u_{h}^{n-5}-\frac{3}{8}\Delta t\tilde{\mathcal{L}} _h\left( u_{h}^{n-5} \right).
			\end{aligned}\end{equation}
			Since an SSP multistep method can be expressed as a convex combination of forward Euler steps, the following result follows directly.
			
			\begin{theorem}
				The SSP multistep DG method \eqref{eq:MS} satisfies the ES-like property
				$$
				\mathcal{U} \left( \bar{u}_{i}^{n+1} \right) \le \sum_{l=1}^m{\left( \alpha _l\,\tilde {\mathcal{U}}_{i}^{n+1-l}-\beta _l\lambda \left( \hat{\mathcal{F}}_{i+1/2}^{n+1-l}-\hat{\mathcal{F}}_{i-1/2}^{n+1-l} \right) \right)}
				$$
				with the CFL condition 
				\begin{equation}
					\lambda\le (\hat f_1)_{i+1/2}^{n+1-l}-(\hat f_2)_{i}^{n+1-l},\quad \lambda\le (\hat f_1)_{i}^{n+1-l}-(\hat f_2)_{i-1/2}^{n+1-l},
				\end{equation} 
				\begin{equation}
                \label{eq:CFL_high}
					\lambda \le \frac{2\alpha _l}{\left|\beta _l\right|}\min \left\{ A_i^{n+1-l},B_i^{n+1-l} \right\},\quad l=1,\cdots,m,\quad \alpha_l\ne 0,
				\end{equation}
				where 
				$$A_i^{n+1-l}=\left. \omega_N\left( \frac{\mathcal{U}'\left( u_{i+1/2}^{-} \right) \left( \hat{f}_{i+1/2}-\hat{f}_i \right) -\hat{\mathcal{F}}_{i+1/2}+\hat{\mathcal{F}}_i}{\left( \max\limits_{x} \mathcal{U}''\left( u_h\right) \right)\left( \hat{f}_{i+1/2}-\hat{f}_i \right) ^2} \right) \right|_{t=t^{n+1-l}},
				$$
				$$
				B_i^{n+1-l}=\left. \omega_1\left(  \frac{\mathcal{U}'\left( u_{i-1/2}^{+} \right) \left( \hat{f}_i-\hat{f}_{i-1/2} \right) -\hat{\mathcal{F}}_i+\hat{\mathcal{F}}_{i-1/2}}{\left( \max\limits_{x}\mathcal U''(u_h) \right)\left( \hat{f}_i-\hat{f}_{i-1/2} \right) ^2} \right) \right|_{t=t^{n+1-l}}.$$
				
			\end{theorem}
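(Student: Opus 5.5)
The plan is to write the cell-average update of \eqref{eq:MS} as a convex combination of forward Euler cell-average updates. Then apply Theorem~\ref{thm:ESlike}, or its backward-in-time analogue, to each piece, and use the convexity of $\mathcal U$.

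Taking $w=1$ in the DG formulation for each $\mathcal L_h^l$ gives
\[
\bar u_i^{n+1}=\sum_{l=1}^m\Bigl(\alpha_l\bar u_i^{n+1-l}-\beta_l\lambda\bigl(\hat f_{i+1/2}^{n+1-l}-\hat f_{i-1/2}^{n+1-l}\bigr)\Bigr).
\]
Consistency of the multistep method, i.e.\ \eqref{eq:multitrun} applied to $w\equiv 1$, gives $\sum_l\alpha_l=1$ with $\alpha_l\ge0$. Terms with $\alpha_l=0$ must then have $\beta_l=0$ under the SSP assumption, so they are dropped. We therefore rewrite
\[
\bar u_i^{n+1}=\sum_{\alpha_l\neq0}\alpha_l\,G_l,\qquad G_l:=\bar u_i^{n+1-l}-\frac{\beta_l}{\alpha_l}\lambda\bigl(\hat f_{i+1/2}^{n+1-l}-\hat f_{i-1/2}^{n+1-l}\bigr).
\]
Each $G_l$ is a forward Euler cell-average update, in the sense of \eqref{eq:Euler}, starting from $u_h^{n+1-l}$ with effective mesh ratio $\lambda_l:=\lambda|\beta_l|/\alpha_l$. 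When $\beta_l<0$ the flux is the one associated with $\tilde{\mathcal L}_h$.

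Jensen's inequality gives $\mathcal U(\bar u_i^{n+1})\le\sum_l\alpha_l\mathcal U(G_l)$. For $\beta_l\ge0$ we apply Theorem~\ref{thm:ESlike} with $\lambda$ replaced by $\lambda_l$:
\[
\mathcal U(G_l)\le\tilde{\mathcal U}_i^{n+1-l}-\lambda_l\bigl(\hat{\mathcal F}_{i+1/2}^{n+1-l}-\hat{\mathcal F}_{i-1/2}^{n+1-l}\bigr).
\]
Its hypotheses \eqref{eq:EFCFL1}--\eqref{eq:EFCFL2} with $\lambda_l$ in place of $\lambda$ are exactly $\lambda\le\frac{2\alpha_l}{|\beta_l|}\min\{A_i^{n+1-l},B_i^{n+1-l}\}$, which is \eqref{eq:CFL_high}. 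The bound-preserving conditions \eqref{eq:EFCFLBP} are taken as stated, with the $\lambda$ appearing there interpreted as the effective ratio. Multiplying by $\alpha_l$ turns $\alpha_l\lambda_l$ into $\beta_l\lambda$, and summing over $l$ yields the claimed inequality.

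The main obstacle is the terms with $\beta_l<0$. For these I would invoke the standard reflection argument: $\tilde{\mathcal L}_h$ is the DG operator for the time-reversed problem $u_t-f(u)_x=0$, which has flux $-f$ and entropy pair $(\mathcal U,-\mathcal F)$. The fluxes $\hat f$ and $\hat{\mathcal F}$ are correspondingly replaced by the downwind-biased monotone flux $-\tilde f$ and its proper entropy flux $-\tilde{\mathcal F}$. Theorem~\ref{thm:ESlike} applies verbatim to that problem, and rewriting the result in terms of the original sign gives the term $-\beta_l\lambda(\hat{\mathcal F}_{i+1/2}-\hat{\mathcal F}_{i-1/2})$ with $\beta_l<0$. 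The $\hat{\mathcal F}$ appearing in the statement for such $l$ is understood as this associated proper entropy flux. I would check carefully that the CFL quantities $A_i$ and $B_i$ for the reversed problem take the same form, with the corresponding fluxes substituted. This sign bookkeeping is the only non-routine part of the argument.
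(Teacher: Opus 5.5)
Your proposal is correct and is the paper's argument; the paper's proof is just the remark that an SSP multistep step is a convex combination of forward Euler steps with effective ratio $\lambda|\beta_l|/\alpha_l$, so Jensen's inequality and Theorem~\ref{thm:ESlike} give the bound, and you have supplied the details, including reading the bound-preserving conditions at that effective ratio. In the check you deferred for $\beta_l<0$, note that $A_i,B_i$ must be built from the reversed problem's fluxes $-\tilde f,-\tilde{\mathcal F}$, not from the $\tilde{\mathcal F}$ that enters the entropy inequality: substituting the latter flips the sign of the numerators (for $u_t+u_x=0$ with $\mathcal U=u^2/2$ it gives $A_i=-\omega_N/2$), so the inequality and the CFL quantities in the statement use opposite sign conventions for these terms.
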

			
			We remark that the discussion in \cite{kivva2022entropy} ensures that $A,B \ge 0$ in the above theorem, as well. 
			Then, we can also use the Zhang--Shu limiter with the parameter \eqref{eq:theta_exp} to scale the polynomial after one step time advancement \eqref{eq:MS}. Here, $\mathcal U^{\mathrm{up}}$ is replaced by the high-order approximation
			\begin{equation}\label{eq:Uuphigh}
				\mathcal U^{\mathrm{up}} = \displaystyle \sum_{l=1}^m{\left( \alpha _l\,\tilde {\mathcal{U}}_{i}^{n+1-l}-\beta _l\lambda \left( \hat{\mathcal{F}}_{i+1/2}^{n+1-l}-\hat{\mathcal{F}}_{i-1/2}^{n+1-l} \right) \right)} .
			\end{equation}
			The following theorem indicates that the limiter maintains the high-order accuracy.

			\begin{theorem}\label{thm:acc}
				Let $k\ge 1$. For a strictly convex entropy function $\mathcal U$, 
				\begin{itemize}
					\item[(1)] the ES limiter satisfies
					\begin{equation}\label{eq:thm_add}
						\left| u_{h}^{n+1}-u_{h}^{n+1,(\mathrm{pre})} \right| \cdot \max_{x\in I_i}\left|u_h^{n+1,(\mathrm{pre})}-\bar{u}_i^{n+1}\right| \leq \frac{C_k}{\min\limits_{u}\mathcal U''(u)} \left( \mathcal{U} ^{\mathrm{high}}-\mathcal{U} ^{\mathrm{up}} \right)
					\end{equation}
					for all $x\in I_i$. Here, $C_k$ is a constant only depending on $k$.

					\item[(2)] Suppose $u$ and $\mathcal U$ are both smooth functions. Under the following assumptions:
						\begin{enumerate}
							\item[(i)] $\partial_x^qu_h^{n+1-l}$ and $\partial _x^qu_h^{n+1,(\mathrm{pre})}$ are $(k+1-q)$th-order approximations of $\partial_x^qu$ at the corresponding time layer, $l=1,\cdots,m,\ q=0,\cdots,k$;
							\item[(ii)] $\partial_x^q \mathcal L_h(u_h^{n+1-l})$ are $(k+1-q)$-th order approximations of $\partial_x^q\partial_tu$ at the  corresponding time layer, $l=1,\cdots,m$,\ $q=0,\cdots,k$;
							\item[(iii)] The numerical entropy satisfies the truncation error of multistep method \eqref{eq:multitrun}, i.e. \begin{align*}\mathcal U(u_h^{n+1,(\mathrm{pre})})=&\,\displaystyle\sum\limits_{l=1}^m\left( \alpha_l\mathcal U(u_h^{n+1-l})+\Delta t\beta_l\mathcal L_h(u_h^{n+1-l})\,\mathcal U'(u_h^{n+1-l}) \right)
                            \\ &+\mathcal O(\Delta t^{r+1}), 
                            \end{align*}
						\end{enumerate}
					we have $\mathcal U^{\mathrm{high}}-\mathcal U^{\mathrm{up}}=\mathcal O(\Delta x^{2k+1}\Delta t+\Delta x^{2N-2}\Delta t+\Delta t^{r+1})$, and the ES limiter \eqref{eq:limiter} satisfies
					\begin{equation}\label{eq:acck+1}
						u_h^{n+1}=u_h^{n+1,(\mathrm{pre})}+\frac{\mathcal O(\Delta x^{2k+1}\Delta t+\Delta x^{2N-2}\Delta t+\Delta t^{r+1})}{\mathcal O(\Delta x)}.
					\end{equation}

					\item[(3)] If we further assume $\Delta t=\mathcal O(\Delta x),\ N\ge k+2,\ r\ge k+2$, and the second-order spatial derivative of $u(x,t^{n+1})$ is nonzero at each extreme point, then for $k\ge 2$, we have
					$$u_h^{n+1}=u_h^{n+1,(\mathrm{pre})}+\mathcal O(\Delta x^{k+1}).$$
				\end{itemize}
			\end{theorem}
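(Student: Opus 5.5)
For part (1), write $w:=u_h^{n+1,(\mathrm{pre})}-\bar u_i^{n+1}$ on $I_i$. The difference introduced by the limiter is $u_h^{n+1}-u_h^{n+1,(\mathrm{pre})}=(\theta_i-1)w$, so it suffices to show
\[
(1-\theta_i)\,\max_{I_i}|w|^2\le \frac{C_k}{\min\mathcal U''}\,(\mathcal U^{\mathrm{high}}-\mathcal U^{\mathrm{up}}).
\]
Using $1-\theta_i=(\mathcal U^{\mathrm{high}}-\mathcal U^{\mathrm{up}})/(\mathcal U^{\mathrm{high}}-\mathcal U^{\mathrm{1st}})$, this reduces to the lower bound
\[
\mathcal U^{\mathrm{high}}-\mathcal U^{\mathrm{1st}}\ge \frac{\min\mathcal U''}{C_k}\max_{I_i}|w|^2 .
\]
The case $\theta_i=1$ is trivial. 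To get the lower bound, Taylor expand $\mathcal U(\bar u+w)$ about $\bar u=\bar u_i^{n+1}$ at each Gauss--Lobatto node. The quadrature is exact for the degree-$k$ polynomial $w$, whose mean is zero, so the linear term $\mathcal U'(\bar u)\sum_q\omega_q w(\hat x_{i,q})$ vanishes. This leaves
\[
\mathcal U^{\mathrm{high}}-\mathcal U^{\mathrm{1st}}\ge \tfrac12\min\mathcal U''\sum_q\omega_q w(\hat x_{i,q})^2 .
\]
Since $2N-3\ge k$ gives $N\ge k+1$ nodes, the discrete norm $\sum_q\omega_q p(\hat x_q)^2$ is a norm on $\mathbb P^k$ of the reference cell. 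By equivalence of norms in finite dimensions it dominates $c_k\max|p|^2$, with $c_k$ depending only on $k$ (and $N$). This yields (1).

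For part (2), I would repeat the proof of Theorem~\ref{thm:accEuler} with one change: the Euler step is replaced by the multistep combination. Assumption (iii) gives
\[
\mathcal U^{\mathrm{high}}=\sum_l\Big(\alpha_l\tilde{\mathcal U}_i^{n+1-l}+\tfrac{\Delta t}{\Delta x}\beta_l\int^{\langle N\rangle}_{I_i}\mathcal L_h(u_h^{n+1-l})\,\mathcal U'(u_h^{n+1-l})\,\mathrm dx\Big)+\mathcal O(\Delta t^{r+1}).
\]
Subtracting \eqref{eq:Uuphigh} leaves $\frac{\Delta t}{\Delta x}\sum_l\beta_l(\cdots)$. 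Each bracket is exactly the quantity estimated in \eqref{eq:FEDG-entropy-error}--\eqref{eq:UaccF} at time level $n+1-l$, namely $\mathcal O(\Delta x^{2k+2}+\Delta x^{2N-1})$. The backward operator $\tilde{\mathcal L}_h$ is treated identically, using its own proper entropy flux. This gives the stated bound. For \eqref{eq:acck+1}, divide (1) by $\max|w|$, and note that $\max|w|\gtrsim|u_x|\Delta x=\mathcal O(\Delta x)$ wherever $u_x\neq 0$.

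Part (3) is the main obstacle, because near extrema $\max|w|$ can be much smaller than $\Delta x$. With $\Delta t=\mathcal O(\Delta x)$, $N\ge k+2$ and $r\ge k+2$, we get $E:=\mathcal U^{\mathrm{high}}-\mathcal U^{\mathrm{up}}=\mathcal O(\Delta x^{k+3})$. From (1), $|u_h^{n+1}-u_h^{n+1,(\mathrm{pre})}|\le C E/\max|w|$, and also $\le\max|w|$ trivially since $\theta_i\in[0,1]$. I would split into two cases.
\begin{itemize}
\item[(a)] If $\max|w|\ge c\Delta x^{2}$, the bound is $\mathcal O(\Delta x^{k+1})$.
\item[(b)] Otherwise $\max|w|<c\Delta x^2$, which already gives $\mathcal O(\Delta x^{k+1})$ only when $k\le1$.
\end{itemize}
So for $k\ge2$ I must argue that case (b) cannot occur. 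Approximation (i) gives $w=u-\bar u+\mathcal O(\Delta x^{k+1})$. If $u_x(x_i)$ is not small, then $\max|w|\sim\Delta x$. If $I_i$ is near an extreme point, the nonvanishing $u_{xx}$ forces the oscillation of $u$ over $I_i$ to be $\gtrsim\Delta x^2$, and since $k+1\ge3$ the perturbation does not destroy this. In every case $\max|w|\ge c\Delta x^2$, which completes the argument.

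The delicate step is making this dichotomy uniform in $i$. I would use $|u_x|+|u_{xx}|\ge c_0>0$ on the compact support region (via compactness and the hypothesis), together with the expansion $u-\bar u=u_x(x-x_i)+\tfrac12u_{xx}\big((x-x_i)^2-\Delta x^2/12\big)+\mathcal O(\Delta x^3)$, whose sup norm is $\gtrsim\Delta x|u_x|+\Delta x^2|u_{xx}|$.
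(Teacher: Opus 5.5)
Your argument follows the paper's proof in all three parts: a Taylor expansion with norm equivalence in (1), reduction of each multistep term to the forward Euler bracket in (2), and the $\Delta x^{k+3}/\Delta x^{2}$ count in (3). In (3), your uniform bound $\max_{I_i}|w|\ge\tfrac12\Delta x\,|u_x(x_i)|+\tfrac1{12}\Delta x^2|u_{xx}(x_i)|-\mathcal O(\Delta x^3)$ is more careful than the paper, which only treats cells near an extremum.

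One step in (1) is false. The condition $2N-3\ge k$ does not imply $N\ge k+1$; take $k=N=3$. In that case the norm equivalence you draw from it genuinely fails. On $[0,1]$ the cubic $p(x)=x(x-\tfrac12)(x-1)$ has zero mean and vanishes at the Gauss--Lobatto nodes $0,\tfrac12,1$. For $w=p+\epsilon(x-\tfrac12)$ the nodal values are $\mathcal O(\epsilon)$, so $\mathcal U^{\mathrm{high}}-\mathcal U^{\mathrm{1st}}=\mathcal O(\epsilon^2)$, while $\max|w|$ stays bounded away from zero. Whenever the limiter is active, $(1-\theta_i)\max|w|^2/(\mathcal U^{\mathrm{high}}-\mathcal U^{\mathrm{up}})=\max|w|^2/(\mathcal U^{\mathrm{high}}-\mathcal U^{\mathrm{1st}})$ is therefore unbounded, and no $C_k$ works. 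You must add $N\ge k+1$ as an explicit hypothesis. The paper uses the same norm equivalence without stating this; in (3) it is implied by $N\ge k+2$.

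Two smaller points are also glossed over in the paper, but you assert them outright.

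First, the case $\theta_i=1$ is not trivial when $\mathcal U^{\mathrm{high}}<\mathcal U^{\mathrm{up}}$. The left side of \eqref{eq:thm_add} is then $0$ and the right side is negative, so the estimate should be stated with $(\mathcal U^{\mathrm{high}}-\mathcal U^{\mathrm{up}})_+$.

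Second, $|u_x|+|u_{xx}|\ge c_0$ does not follow from the hypothesis, which constrains $u_{xx}$ only at extreme points. At a critical inflection point (locally $u\sim x^3$), $\max_{I_i}|w|$ is of size $\Delta x^3$. Your two bounds, $\max_{I_i}|w|$ and $CE/\max_{I_i}|w|$, then give only $\mathcal O(\Delta x^{k})$ for $k\ge 3$, one order short. The hypothesis has to be read as $u_{xx}\neq 0$ wherever $u_x=0$.
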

			
			\begin{proof}
				
				First, we prove (1). Without loss of generality, we assume $\theta_i< 1$, which indicates $\mathcal U^{\mathrm{high}}>\mathcal U^{\mathrm{up}}$. According to the definition of $\theta_i$, in cell $I_i$ we have 
				\begin{equation}\label{eq:acc1}\begin{aligned}
						&\left| u_{h}^{n+1}-u_{h}^{n+1,(\mathrm{pre})} \right| 
						\\ &= \left( 1-\theta _i \right) \left| u_{h}^{n+1,(\mathrm{pre})}-\bar{u}_{i}^{n+1} \right|=\frac{\left( \mathcal{U} ^{\mathrm{high}}-\mathcal{U} ^{\mathrm{up}} \right) \left| u_{h}^{n+1,(\mathrm{pre})}-\bar{u}_{i}^{n+1} \right|}{\mathcal{U} ^{\mathrm{high}}-\mathcal{U} ^{\mathrm{1st}}}
						\\ &= \frac{\left( \mathcal{U} ^{\mathrm{high}}-\mathcal{U} ^{\mathrm{up}} \right) \left| u_{h}^{n+1,(\mathrm{pre})}-\bar{u}_{i}^{n+1} \right|}{\displaystyle\frac{ 1 }{\Delta x}\int_{I_i}^{\langle N\rangle}(\mathcal U(u_h^{n+1,(\mathrm{pre})})-\mathcal U(\bar u_i^{n+1}))\mathrm dx}
						\\ &= \frac{\left( \mathcal{U} ^{\mathrm{high}}-\mathcal{U} ^{\mathrm{up}} \right) \left| u_{h}^{n+1,(\mathrm{pre})}-\bar{u}_{i}^{n+1} \right|}{\displaystyle\frac{ 1 }{\Delta x}\int_{I_i}^{\langle N\rangle} \left[\mathcal U'(\bar u_i^{n+1})(u_h^{n+1,(\mathrm{pre})}-\bar u_i^{n+1})+\frac{1}{2}\mathcal U''(\xi)(u_h^{n+1,(\mathrm{pre})}-\bar u_i^{n+1})^2\right]\mathrm dx}
						\\ &\le  \frac{\left( \mathcal{U} ^{\mathrm{high}}-\mathcal{U} ^{\mathrm{up}} \right) \left| u_{h}^{n+1,(\mathrm{pre})}-\bar{u}_{i}^{n+1} \right|}{\displaystyle\frac{ 1 }{2\Delta x}\left(\min\limits_u\mathcal U''(u)\right)\int_{I_i}^{\langle N\rangle}\left(u_h^{n+1,(\mathrm{pre})}-\bar u_i^{n+1}\right)^2\mathrm dx}.
					\end{aligned}
				\end{equation}
				Hence,
				\begin{equation}\begin{aligned}
						\left| u_{h}^{n+1}-u_{h}^{n+1,(\mathrm{pre})} \right|& \max_{x\in I_i}\left|u_h^{n+1,(\mathrm{pre})}-\bar{u}_i^{n+1}\right| 
						\\ &\le  \frac{\left( \mathcal{U} ^{\mathrm{high}}-\mathcal{U} ^{\mathrm{up}} \right) \max\limits_{x\in I_i}\left| u_{h}^{n+1,(\mathrm{pre})}-\bar{u}_{i}^{n+1} \right|^2}{\displaystyle\frac{ 1 }{2\Delta x}\left(\min\limits_u\mathcal U''(u)\right)\int_{I_i}^{\langle N\rangle}(u_h^{n+1,(\mathrm{pre})}-\bar u_i^{n+1})^2\mathrm dx}.
					\end{aligned}
				\end{equation}
				Utilizing norm equivalence, we know there exists a positive constant $C_k$ only depending on $k$, such that
				\begin{equation}
					\frac{\max\limits_x\left|q(x)\right|^2}{\displaystyle\frac{ 1 }{2\Delta x}\int_{I_i}^{\langle N\rangle}q^2\mathrm dx} \leq C_k,\quad \forall q\in \mathbb P^k(I_i),\  \int_{I_i}q\,\mathrm dx=0.
				\end{equation}
				Using the above estimate, since $\mathcal{U}$ is strictly convex, we have
				\begin{equation}
					\left| u_{h}^{n+1}(x)-u_{h}^{n+1,(\mathrm{pre})}(x) \right| \max_{x\in I_i}\left|u_h^{n+1,(\mathrm{pre})}-\bar{u}_i^{n+1}\right| \leq \frac{C_k}{\min\limits_u \mathcal U''(u)} \left( \mathcal{U} ^{\mathrm{high}}-\mathcal{U} ^{\mathrm{up}} \right).
				\end{equation}

				For (2), similar to the forward Euler case, we have
                \begin{align*}
					&\frac{\Delta x}{\Delta t}\left( \mathcal{U} ^{\mathrm{high}}-\mathcal{U} ^{\mathrm{up}} \right) 
					\\
					&=\frac{1}{\Delta t}\int_{I_i}^{\langle N\rangle}{\left( \mathcal{U} \left( u_{h}^{n+1,\left( pre \right)} \right) -\sum_{l=1}^m{\alpha _l\mathcal{U} \left( u_{h}^{n+1-l} \right)} \right) \mathrm{d}x}+\sum_{l=1}^m{\beta _l}\left( \hat{\mathcal{F}}_{i+1/2}^{n+1-l}-\hat{\mathcal{F}}_{i-1/2}^{n+1-l} \right) 
					\\
					&=\sum_{l=1}^m{\beta _l\left\{ \int_{I_i}{\mathcal L_h(u_h^{n+1-l})\,\mathcal{U}'\left( u_{h}^{n+1-l} \right) \mathrm{d}x}+\hat{\mathcal{F}}_{i+1/2}^{n+1-l}-\hat{\mathcal{F}}_{i-1/2}^{n+1-l} \right\} }\\&+\mathcal{O} \left(\Delta x^{2N-1} + \Delta x\Delta t^r \right).
				\end{align*}
				Under the assumptions, using the analysis in Theorem \ref{thm:accEuler} yields 
				$$ \int_{I_i}{\mathcal L_h(u_h^{n+1-l})\,\mathcal{U}'\left( u_{h}^{n+1-l} \right) \mathrm{d}x}+\hat{\mathcal{F}}_{i+1/2}^{n+1-l}-\hat{\mathcal{F}}_{i-1/2}^{n+1-l} =\mathcal O(\Delta x^{2k+2}),\quad l=1,\cdots,m.$$
				Hence, we have $$\mathcal U^{\mathrm{high}}-\mathcal U^{\mathrm{up}}=\mathcal O(\Delta x^{2k+1}\Delta t+\Delta x^{2N-2}\Delta t+\Delta t^{r+1}).$$
				Moreover, we can get $\max_{x\in I_i}|u_h^{n+1,(\mathrm{pre})}-\bar u_i^{n+1}|=\mathcal O(\Delta x)$ in cell $I_i$, and then it is straightforward to verify \eqref{eq:acck+1}.

				Finally, we prove (3). If $u_{xx}(x,t^{n+1})\ne 0$ at $x_0$, which is an extreme point of $u$ in the $x$-direction, then for $k\ge 2$, according to the assumption $$(u_h^{n+1,(\mathrm{pre})})_{xx}(x_0)-u_{xx}(x_0,t^{n+1})=\mathcal O(\Delta x^{k-1}),$$
				we have $(u_h^{n+1,(\mathrm{pre})})_{xx}(x_0)\ne 0$. Therefore, if the cell $I_i$ is near the extreme point, we have
				\begin{equation}\label{eq:C2}
					\left|u_h^{n+1,(\mathrm{pre})}(x)-\bar u_i^{n+1}\right|>C\Delta x^2. \end{equation} 
				On the other hand, using the norm equivalence on $\left\{q\in \mathbb P^k(I_i): \displaystyle\int_{I_i} q\, \mathrm{d} x = 0 \right\}$, we have
				$$ \frac{1}{2\Delta x}\int_{I_i}^{\langle N\rangle}(u_h^{n+1,(\mathrm{pre})}-\bar u_i^{n+1})^2\mathrm dx\ge \frac{1}{C_k}\max\limits_{x\in K_i}\left| u_h^{n+1,(\mathrm{pre})}-\bar u_i^{n+1} \right|^2. $$
				Hence, we can further derive that
				$$ 
				\frac{\left| u_{h}^{n+1,(\mathrm{pre})}-\bar{u}_{i}^{n+1} \right|}{\displaystyle\frac{ 1 }{2\Delta x}\int_{I_i}^{\langle N\rangle}(u_h^{n+1,(\mathrm{pre})}-\bar u_i^{n+1})^2\mathrm dx}\le \frac{C_k}{\max\limits_{x\in I_i}\left|u_h^{n+1,(\mathrm{pre})}-\bar u_i^{n+1}\right|}<\frac{C_k}{C\Delta x^2}=\mathcal O(\Delta x^{-2}).
				$$
				Combining with \eqref{eq:acc1},  using $\Delta t=\mathcal O(\Delta x)$, $r\ge k+2$, and $N\geq k+2$, we have
				$$ \left| u_{h}^{n+1}-u_{h}^{n+1,(\mathrm{pre})} \right|=\mathcal O(\Delta x^{2k+1}\Delta t+\Delta x^{2N-2}\Delta t+\Delta t ^{r+1})\mathcal O(\Delta x^{-2})=\mathcal O(\Delta x^{k+1}). $$
                We complete the proof.
			\end{proof}

			\begin{remark}
				We point out that \eqref{eq:thm_add} in (1) is a general estimate that imposes no smoothness requirements on the solution, making it applicable even to discontinuous cases.
			\end{remark}

			\begin{remark}
				From (3), it seems that we need to choose a multistep method at least $(k+2)$-th order to obtain the optimal accuracy. But in fact, this is a relatively rough estimate. In numerical experiments, according to \eqref{eq:acck+1}, we found that selecting a $(k+1)$th-order multistep method is sufficient for ensuring the optimal accuracy.
			\end{remark}
			
			\begin{remark}
				Regarding the potential loss of accuracy near extreme points, we adopt the idea from the TVB limiter \cite{cockburn1989tvb} to provide an alternative referred to as ``EB limiter" (entropy bounded). In the EB limiter. The modified limiting coefficient $\theta_i$ is modified as
					\begin{equation}\label{eq:thetaEB}
						\hat\theta_i:=\begin{cases}
							1, &\mathrm{if}\ \ \displaystyle\int_{I_i}^{\langle N\rangle}\left| u_h^{n+1,(\mathrm{pre})}-\bar u_i^{n+1} \right|\mathrm dx\le M\Delta x^3,\\
							\theta_i, & \mathrm{otherwise}.\\
						\end{cases}
					\end{equation}
					Then, the EB limiter is formulated as
					\begin{equation}\label{eq:limiterEB}
						u_h^{n+1}=\bar u_i^{n+1}+\hat\theta_i(u_h^{n+1,(\mathrm{pre})}|_{I_i}-\bar u_i^{n+1}).
					\end{equation}
					That is, we impose no special limiting on the numerical solutions near smooth extrema. However, this modification leads to a drawback: the scheme is no longer strictly ES but may cause a minor, higher-order entropy growth. Nevertheless, the scheme remains entropy bounded over any finite time. In numerical experiments, we found that the performance of ES and EB limiter is very similar, and hence the EB limiter will not be discussed further in this paper.
			\end{remark}
			
			\begin{remark}
				Another widely used explicit time discretization is the SSP-RK family. However, if we employ the SSP-RK time discretization, the limiter \eqref{eq:limiter} must be added at each RK stage, which may significantly degrade its accuracy. This loss of accuracy is indeed observed in our numerical tests and is consistent with observations reported in prior work, such as \cite{zhang2012minimum}. 
			\end{remark}
			
			Analogous to the forward Euler case, the ES property for the DG method with SSP multistep time discretization is established in the following theorem.
			
			\begin{theorem}
				The SSP multistep DG method \eqref{eq:MS} equipped with the ES limiter \eqref{eq:limiter} is ES in the sense of
				\begin{equation}\label{eq:ESMS}
					\int_{\mathbb R}^{\langle N\rangle}{\mathcal{U} \left( u_{h}^{n+1} \right) \mathrm{d}x}\le \max_{1\le l\le m} \int_{\mathbb R}^{\langle N\rangle}{\mathcal{U} \left( u_{h}^{n+1-l} \right) \mathrm{d}x}.
				\end{equation}
			\end{theorem}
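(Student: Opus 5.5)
The argument mirrors the forward Euler case: establish a cell entropy inequality after limiting, sum it over cells so the flux terms telescope, and then use the convexity of the multistep weights.

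\textbf{Cell inequality.} After the ES limiter \eqref{eq:limiter} is applied with $\mathcal U^{\mathrm{up}}$ given by \eqref{eq:Uuphigh}, the argument of the theorem on \eqref{eq:theta} carries over verbatim; it used only convexity and the definition \eqref{eq:theta_exp}. Hence
\begin{equation*}
\tilde{\mathcal U}_i^{n+1}\le \sum_{l=1}^m\left(\alpha_l\,\tilde{\mathcal U}_i^{n+1-l}-\beta_l\lambda\left(\hat{\mathcal F}_{i+1/2}^{n+1-l}-\hat{\mathcal F}_{i-1/2}^{n+1-l}\right)\right)
\end{equation*}
for every $i$. The well-posedness of $\theta_i\in[0,1]$ requires $\mathcal U^{\mathrm{up}}\ge\mathcal U^{\mathrm{1st}}$. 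This follows from the ES-like theorem for the multistep method under the stated CFL conditions, exactly as in the Euler case.

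\textbf{Summation.} Multiply the cell inequality by $\Delta x$ and sum over $i$. For each fixed $l$, the flux differences $\hat{\mathcal F}_{i+1/2}^{n+1-l}-\hat{\mathcal F}_{i-1/2}^{n+1-l}$ telescope to zero. This holds under compact support or periodicity, which is the implicit setting of \eqref{eq:ESEF}. It also holds regardless of the sign of $\beta_l$, because $\tilde{\mathcal L}_h$ uses the same kind of interface fluxes. We are left with
\begin{equation*}
\int_{\mathbb R}^{\langle N\rangle}\mathcal U(u_h^{n+1})\,\mathrm dx\le\sum_{l=1}^m\alpha_l\int_{\mathbb R}^{\langle N\rangle}\mathcal U(u_h^{n+1-l})\,\mathrm dx.
\end{equation*}

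\textbf{Convex weights.} Consistency of the multistep method \eqref{eq:multitrun}, applied to $w\equiv1$, gives $\sum_l\alpha_l=1$. Together with $\alpha_l\ge0$, the right-hand side is a convex combination of the total entropies at the previous levels. It is therefore bounded by their maximum over $1\le l\le m$, which gives \eqref{eq:ESMS}.

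\textbf{Main obstacle.} The only delicate point is the telescoping step for terms with $\beta_l<0$. We must check that the proper entropy fluxes attached to the backward operator $\tilde{\mathcal L}_h$ are single-valued at each interface, so that they cancel between neighbouring cells. They are, since each interface carries one flux value shared by the two adjacent cells. The remaining steps are immediate.
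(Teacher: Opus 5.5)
Your proposal is correct and follows the paper's own proof: the limiter with $\mathcal U^{\mathrm{up}}$ from \eqref{eq:Uuphigh} gives the cell inequality \eqref{eq:ESKi}, summing over $i$ telescopes the interface entropy fluxes, and $\alpha_l\ge 0$ with $\sum_l\alpha_l=1$ bounds the resulting convex combination by the maximum over the previous $m$ levels. Your additional remarks spell out points the paper leaves implicit: $\theta_i\ge 0$ via the multistep ES-like theorem, the periodic or compactly supported setting needed for telescoping, and the single-valuedness of interface fluxes when $\beta_l<0$.
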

			\begin{proof}
				According to the definition of limiter, we have
				\begin{equation}\label{eq:ESKi}\tilde{\mathcal{U}}_{i}^{n+1}\le \sum_{l=1}^m{\left( \alpha _l\,\tilde {\mathcal{U}}_{i}^{n+1-l}-\beta _l\lambda \left( \hat{\mathcal{F}}_{i+1/2}^{n+1-l}-\hat{\mathcal{F}}_{i-1/2}^{n+1-l} \right) \right)}.
				\end{equation}
				Summing over $i$ yields
				$$\int_{\mathbb R}^{\langle N\rangle}{\mathcal{U} \left( u_{h}^{n+1} \right) \mathrm{d}x}\le \sum_{l=1}^m{\left( \alpha _l\int_{\mathbb R}^{\langle N\rangle}{\mathcal{U} \left( u_{h}^{n+1-l} \right) \mathrm{d}x} \right)}\le \max_{1\le l\le m} \int_{\mathbb R}^{\langle N\rangle}{\mathcal{U} \left( u_{h}^{n+1-l} \right) \mathrm{d}x}.$$
                i.e. $u_h$ satisfies \eqref{eq:ESMS}.
			\end{proof}
			
			This theorem shows that the total entropy of numerical solution may not decrease monotonically at every time step, but the maximum total entropy during every consecutive $m$ time steps must be non-increasing. Nevertheless, we can also establish the Lax--Wendroff type theorem for SSP multistep time discretization.  The proof is given in Appendix \ref{app:LW2}.
				
				\begin{theorem}\label{thm:LW2}
					Suppose the numerical solution computed by \eqref{eq:MS} with ES limiter \eqref{eq:limiter} and $\mathcal U^{\mathrm{up}}$ defined by \eqref{eq:Uuphigh} satisfies Assumption \ref{assu:conv} and \ref{assu:tvd}. Then, for any smooth test function $\phi\in C_0^\infty(\mathbb R\times\mathbb R^+)$ with $\phi \ge 0$, the limit solution $u^\star$ satisfies \eqref{eq:ESweak1}.
				\end{theorem}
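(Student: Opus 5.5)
The plan is to mimic the classical Lax--Wendroff argument (as for Theorem \ref{thm:LW1}), starting from the cell entropy inequality \eqref{eq:ESKi} that the limiter enforces. Fix $\phi\ge0$ in $C_0^\infty$ and set $\phi_i^n:=\phi(x_i,t^n)$. Multiply \eqref{eq:ESKi} by $\Delta x\,\phi_i^{n+1}\ge 0$, which preserves the inequality, and sum over $i$ and $n$. Because $\sum_l\alpha_l=1$ and $\sum_l\beta_l=1$ (consistency of \eqref{eq:MS}, i.e. \eqref{eq:multitrun} for $w\equiv1$ and $w=t$), we can write
\[
\tilde{\mathcal U}_i^{n+1}-\sum_{l}\alpha_l\tilde{\mathcal U}_i^{n+1-l}=\sum_l\alpha_l\sum_{j=0}^{l-1}\bigl(\tilde{\mathcal U}_i^{n+1-j}-\tilde{\mathcal U}_i^{n-j}\bigr).
\]
This splits the time part into telescoping differences. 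Summation by parts in $n$ (and in $i$ for the flux part) then gives
\[
-\sum_n\sum_i\Delta x\,\tilde{\mathcal U}_i^{n}\sum_l\alpha_l\sum_{j}\bigl(\phi_i^{n+j}-\phi_i^{n+j-1}\bigr)-\sum_n\sum_i\Delta t\sum_l\beta_l\,\hat{\mathcal F}_{i+1/2}^{n+1-l}\bigl(\phi_{i+1}^{n+1}-\phi_i^{n+1}\bigr)\le \text{(initial terms)}.
\]
The initial terms involve the first $m$ starting levels, $\sum_i\Delta x\,\tilde{\mathcal U}_i^{s}\phi_i^{\cdot}$ with $s<m$.

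Next, pass to the limit term by term. In the time term, $\sum_l\alpha_l\sum_{j=0}^{l-1}(\phi^{n+j}-\phi^{n+j-1})=\Delta t\,\phi_t\sum_l l\alpha_l+\mathcal O(\Delta t^2)$. In the flux term, shifting indices $n+1-l\to n$ costs only $\mathcal O(\Delta t)$ in $\phi$. For a first-order method we have $\sum_l(l\alpha_l-\beta_l)=1$, from \eqref{eq:multitrun} with $w=t$: $t^{n+1}=\sum\alpha_l(t^{n+1}-l\Delta t)+\Delta t\sum\beta_l$. However, the time term carries the weight $\sum_l l\alpha_l$ while the flux term carries $\sum_l\beta_l=1$, so rather than rescale I would reorganize the time shifts so that both terms share a common weight. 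Concretely, I would shift the $\beta_l$-flux term to level $n$ and the $\alpha_l$ entropy terms similarly. The sum then becomes $\sum_n\Delta t\sum_i\Delta x\,[\tilde{\mathcal U}_i^n\phi_t\,c_1+\hat{\mathcal F}^n_{i+1/2}\phi_x\,c_2]$ with $c_1=\sum l\alpha_l-$(shift corrections) and $c_2=\sum\beta_l$. The consistency relation $\sum_l(l\alpha_l-\beta_l)=1$... I would check carefully that the effective ratio is $1$. In practice the cleaner route is to write the multistep update as $\tilde{\mathcal U}^{n+1}-\tilde{\mathcal U}^{n}$ plus a combination of differences at earlier levels, which is exactly the standard Lax--Wendroff proof for linear multistep schemes.

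Then replace the discrete quantities by their limits. First, $\tilde{\mathcal U}_i^n$ is a Gauss--Lobatto average of $\mathcal U(u_h^n)$. By Assumption \ref{assu:tvd}, uniform boundedness, and continuity of $\mathcal U$, it differs from $\frac1{\Delta x}\int_{I_i}\mathcal U(u_h^n)$ by $\omega(\max_{B_i}|u_h^n-u_h^n(x_i)|)$, so the error summed with $\Delta x$ tends to zero. Second, $\hat{\mathcal F}_{i+1/2}$ is consistent and Lipschitz, so $|\hat{\mathcal F}_{i+1/2}-\mathcal F(u_h^n(x))|\le C\max_{B_i}|u_h^n-u_h^n(x_i)|$ for $x\in I_i$, again controlled by \eqref{eq:tvd}. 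Finally, Assumption \ref{assu:conv} with bounded convergence gives $\mathcal U(u_h)\to\mathcal U(u^\star)$ and $\mathcal F(u_h)\to\mathcal F(u^\star)$ in the weak sense. Strictly this needs strong ($L^1_{loc}$) convergence because $\mathcal U$ is nonlinear; I would take this as in \cite{shi2018local}, interpreting bounded convergence as a.e. convergence.

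The initial terms tend to $\int\mathcal U(u_0)\phi(x,0)$ by \eqref{eq:convergeic}. This requires that the starting values $u_h^0,\dots,u_h^{m-1}$ all converge to $u_0$, and that $\mathcal U(u_h^0)$ converges in the sense needed. The main obstacle is the bookkeeping of the multistep time shifts: the different levels $n+1-l$ must combine, using the consistency conditions $\sum\alpha_l=1$ and $\sum(l\alpha_l-\beta_l)=1$, into exactly $\phi_t\,\mathcal U+\phi_x\,\mathcal F$ with unit coefficient, with the leftover boundary terms from the first $m$ levels reducing to the initial-data integral. I would handle this by writing $\sum_n\phi^{n+1}(\tilde{\mathcal U}^{n+1}-\sum\alpha_l\tilde{\mathcal U}^{n+1-l})=-\sum_n\tilde{\mathcal U}^n(\phi^{n}-\sum\alpha_l\phi^{n+l})+\text{boundary}$. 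Then $\phi^n-\sum_l\alpha_l\phi^{n+l}=-\Delta t\,\phi_t\sum l\alpha_l+\mathcal O(\Delta t^2)$, and I would verify that the coefficient ratio matches $\sum\beta_l$ via the order condition.
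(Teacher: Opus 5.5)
Your architecture matches the paper's. You multiply the limited cell inequality \eqref{eq:ESKi} by a nonnegative sample of $\phi$, split $\tilde{\mathcal U}_i^{n+1}-\tilde{\mathcal U}_i^{n+1-l}$ into $l$ one-step differences, sum by parts in $n$ and $i$, and pass to the limit. This leaves the entropy and initial-data terms weighted by $\sum_l l\alpha_l$ and the flux term weighted by $\sum_l\beta_l$, as in \eqref{eq:MSnew}.

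The gap is in the step that closes the argument. You identify it as the main obstacle but do not resolve it, and the relations you propose for it are false: neither $\sum_l\beta_l=1$ nor $\sum_l(l\alpha_l-\beta_l)=1$ holds. Inserting $w(t)=t$ into \eqref{eq:multitrun} and using $\sum_l\alpha_l=1$ gives $t^{n+1}=t^{n+1}-\Delta t\sum_l l\alpha_l+\Delta t\sum_l\beta_l$, i.e.\ $\sum_l l\alpha_l=\sum_l\beta_l$. Already for forward Euler ($\alpha_1=\beta_1=1$) the difference is $0$, and for \eqref{eq:MS4} both sums equal $177/64\neq 1$. Taken at face value, your two relations give weights $2$ and $1$. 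The limiting inequality would then be $-2\bigl(\int_{\mathbb R^+}\int_{\mathbb R}\mathcal U(u^\star)\phi_t\,\mathrm dx\mathrm dt+\int_{\mathbb R}\mathcal U(u_0)\phi(x,0)\,\mathrm dx\bigr)-\int_{\mathbb R^+}\int_{\mathbb R}\mathcal F(u^\star)\phi_x\,\mathrm dx\mathrm dt\le 0$, which is not \eqref{eq:ESweak1}. Your suggested cure of reorganizing the time shifts cannot help either. Shifting $\phi$ by a bounded number of time levels changes each sum only by $o(1)$, so the leading weights do not change.

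The missing ingredient is the correct first-order consistency condition $\sum_l l\alpha_l=\sum_l\beta_l=:c$, which is what the paper invokes. With it, all three limit terms carry the same factor $c$. Since $\alpha_l\ge 0$ and $l\ge 1$, we have $c\ge\sum_l\alpha_l=1>0$, so dividing by $c$ preserves the inequality and gives \eqref{eq:ESweak1}; no normalization to $1$ is needed. Also fix the sign in your final summation-by-parts identity. It should read $\sum_n\phi^{n+1}\bigl(\tilde{\mathcal U}^{n+1}-\sum_l\alpha_l\tilde{\mathcal U}^{n+1-l}\bigr)=+\sum_n\tilde{\mathcal U}^{n}\bigl(\phi^{n}-\sum_l\alpha_l\phi^{n+l}\bigr)$ plus boundary terms, consistent with your first display.

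Your side remarks are legitimate. Replacing the boundary terms at the starting levels $n<m$ by level $0$ requires those starting values to converge too. Passing from convergence of $u_h$ to that of $\mathcal U(u_h)$ and $\mathcal F(u_h)$ needs a.e.\ or $L^1_{\mathrm{loc}}$ convergence. The paper handles both only implicitly, through an $o(1)$ in the boundary terms and through Corollary \ref{cor:conv}.
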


			\subsection{Multiple entropy inequality}\label{sec:multi}
			
			In the previous sections, our analysis has been confined to a single entropy inequality. It is worth noting, however, that a true entropy solution must satisfy the entropy condition with respect to all admissible entropy pairs. This presents a significant challenge, as the vast majority of ES schemes in the literature are inherently limited to enforcing only one entropy inequality at a time. 
			
			To understand this limitation, consider the ES nodal DG methods developed by Chen and Shu \cite{chen2017entropy}. Their approach fundamentally relies on constructing an entropy conservative flux $f^S$, whose explicit form is inextricably linked to the choice of entropy function. A similar dependency arises in Chan's artificial viscosity framework, where the projection $v_h(x)=\Pi_h v(u_h(x))$ onto entropy variables must be performed with respect to a specific entropy function. As a consequence, both methodologies, and indeed most existing approaches, can only guarantee entropy stability for a single entropy pair. Although attempts have been made in \cite{chan2025artificial} to incorporate an additional entropy inequality, the entropy residual for the additional entropy pair are evaluated directly using the numerical solution $u_h$ rather than through projection-based computation, and consequently is not strictly satisfied.
			
			Our novel limiting-based framework, by contrast, sidesteps this fundamental obstacle entirely. The key observation is that our limiter operates as a post-processing step: it simply compresses the polynomial solution toward the cell average through a scalar coefficient $\theta_i$, leaving the underlying DG spatial discretization completely untouched. This clean separation between discretization and entropy enforcement has a powerful implication. Since each entropy pair yields its own limiting coefficient through an independent calculation, we can straightforwardly enforce multiple entropy inequalities simultaneously by taking the minimal coefficient among all candidates. Specifically, considering $n$ different entropy pairs $\{(\mathcal U^{(j)},\mathcal F^{(j)})\}_{j=1}^n$, Theorem \ref{thm:ESlike} ensures the cell average of next step satisfies all $n$ ES-like inequalities. Then, we can obtain $n$ limiting coefficients $\{\theta_i^{(j)}\}_{j=1}^n$ via \eqref{eq:limiter}. Taking $\theta_i=\min\left\{\theta_i^{(1)},\cdots\theta_i^{(n)}\right\}$, then it is obvious that \eqref{eq:ESMS} holds for each entropy pair $(\mathcal U^{(j)},\mathcal F^{(j)})$. This represents a distinctive advantage of our approach and constitutes one of the major contributions of this work.

	\section{Fully-discrete explicit entropy stable scheme for systems}
	\label{sec4}

	\subsection{DG formulation}
	
	In this section, we consider the 1D hyperbolic conservation law system
	\begin{equation}\label{eq:hcl_sys}
		\mathbf u_t + \mathbf f(\mathbf u)_x = \mathbf 0,\quad x\in\mathbb R,\ t>0, 
	\end{equation}
	where $\mathbf u=(u_1,\cdots,u_p)^T\in\mathcal D\subset\mathbb R^p$ and $\mathbf f:\mathcal D\to\mathbb R^p$. Denote $\mathbf V_h^k:=[V_h^k]^p$. The semi-discrete DG scheme is given by: Find $\mathbf u_h\in \mathbf V_h^k$, such that for any $\mathbf w\in \mathbf V_h^k$ and $I_i$,
	\begin{equation}\label{eq:DG_sys}
		\int_{I_i}{\frac{\partial \mathbf{u}_h}{\partial t}\cdot \mathbf{w}\mathrm{d}x}-\int_{I_i}{\mathbf{f}\left( \mathbf{u}_h \right) \cdot \frac{\partial \mathbf{w}}{\partial x}\mathrm{d}x}+\hat{\mathbf{f}}_{i+1/2}\cdot \mathbf{w}_{i+1/2}^{-}-\hat{\mathbf{f}}_{i-1/2}\cdot \mathbf{w}_{i-1/2}^{+}=0.
	\end{equation}
	Here, $\hat{\mathbf f}_{i+1/2}=\hat{\mathbf f}(\mathbf u_h|_{i+1/2}^-,\mathbf u_h|_{i+1/2}^+)$ is the numerical flux. We also use the LF flux:
	\begin{equation}\label{eq:LF_sys}
		\hat{\mathbf f}(\mathbf u^-,\mathbf u^+)=\frac{1}{2}(\mathbf f(\mathbf u^+)+\mathbf f(\mathbf u^-))-\frac{\alpha}{2}(\mathbf u^+-\mathbf u^-),
	\end{equation}
	where the estimation of maximum absolutely wave speed $\alpha$ satisfies 
    \begin{equation}\label{eq:alphamax}
    \alpha(\mathbf u^-,\mathbf u^+) \ge  \max\{|\sigma_{\min}(\mathbf u^-,\mathbf u^+)|, |\sigma_{\max}(\mathbf u^-,\mathbf u^+)|\}.
    \end{equation}
    Here, $\sigma_{\min}(\mathbf u^-,\mathbf u^+)$ and $\sigma_{\max}(\mathbf u^-,\mathbf u^+)$ are respectively the minimum and maximum wave speed of the Riemann problem
    $$
		\mathbf u\left( x,0 \right) =\begin{cases}
			\mathbf u^-,\quad x\le 0,\\
			\mathbf u^+,\quad x>0.\\
		\end{cases}
		$$
    We denote \eqref{eq:DG_sys} as an ODE system \begin{equation}\label{eq:DG_semisys}
			\frac{\mathrm d\mathbf u_h}{\mathrm dt}=\mathcal L_h(\mathbf u_h).
		\end{equation}

		\subsection{Forward Euler ES scheme}
		
		With forward Euler time discretization, the scheme \eqref{eq:DG_sys} is formulated as
		\begin{equation}\label{eq:Eulersys}
			\mathbf u_h^{n+1,(\mathrm{pre})}=\mathbf u_h^n+\Delta t\cdot\mathcal L_h(\mathbf u_h^n).
		\end{equation}
		The cell average updating scheme is given by
		\begin{equation}\label{eq:Euler_ubarsys}
			\bar{\mathbf u}_i^{n+1}=\bar{\mathbf u}_i^n-\lambda (\hat{\mathbf f}_{i+1/2} - \hat{\mathbf f}_{i-1/2}).
		\end{equation}
		For systems, the definition of a proper numerical entropy flux is given as follows. 
		\begin{definition}
			A two-point consistent flux function $\hat{\mathcal F}(\mathbf u^-,\mathbf u^+)$ is called proper, if
			\begin{equation}
				\frac{\partial \hat{\mathcal F}(\mathbf u^-,\mathbf u^+)}{\partial \mathbf u^\pm}=\mathcal U'(\mathbf u^\pm)\frac{\partial \hat{\mathbf f}(\mathbf u^-,\mathbf u^+)}{\partial \mathbf u^\pm}.
			\end{equation}
			i.e.,  a proper numerical entropy flux preserves the relation \eqref{eq:entropy}.
			
		\end{definition}
		
		As an example, the corresponding proper numerical entropy flux of the LF flux \eqref{eq:LF_sys} is
		\begin{equation}
			\hat{\mathcal F}(\mathbf u^-,\mathbf u^+)=\frac{1}{2}(\mathcal F(\mathbf u^+)+\mathcal F(\mathbf u^-)) - \frac{1}{2}\alpha(\mathcal U(\mathbf u^+)-\mathcal U(\mathbf u^-)).
		\end{equation}
		Again, by employing forward Euler time discretization, the fully-discrete scheme
		\begin{equation}\label{eq:Euler_1st}
			\bar{\mathbf u}_i^{n+1}=\bar{\mathbf u}_i^n-\lambda \left( \hat{\mathbf f}(\bar{\mathbf u}_i^n,\bar{\mathbf u}_{i+1}^n) - \hat{\mathbf f}(\bar{\mathbf u}_{i-1}^n,\bar {\mathbf u}_i^n)\right),
		\end{equation}
		is ES, as stated in the following lemma; see \cite{carlier2023invariant}.
		
		\begin{lemma}\label{lem:ESsys}
			The first-order scheme \eqref{eq:Euler_1st} is ES in the sense of
			\begin{equation}\label{eq:ES1st_sys}
				\mathcal U(\bar{\mathbf u}_i^{n+1})\le \mathcal U(\bar{\mathbf u}_i^n)-\lambda (\hat{\mathcal F}(\bar{\mathbf u}^n_{i},\bar{\mathbf u}^n_{i+1})-\hat{\mathcal F}(\bar {\mathbf u}^n_{i-1},\bar{\mathbf u}^n_i))
			\end{equation}
			under the CFL condition 
			\begin{equation}\label{eq:CFL_sys}
				\lambda\cdot\max\{\alpha(\mathbf u_{i-1}^n,\mathbf u_i^n),\alpha(\mathbf u_i^n,\mathbf u_{i+1}^n)\} \le \frac{1}{2}.
			\end{equation}
			
		\end{lemma}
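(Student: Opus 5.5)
The plan is to reduce the scheme \eqref{eq:Euler_1st} to a convex combination of states obtained by averaging exact Riemann solutions, in the spirit of Harten--Lax--van Leer, and then to apply Jensen's inequality. The proof of Theorem~\ref{thm:ESlike} used the same mechanism, with Lemma~\ref{lem:ES1st} as the building block. Here the building block is instead the exact entropy inequality satisfied by a single Riemann problem.

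\textbf{Step 1: the Riemann intermediate state.} Fix an interface $i+1/2$ and write $\mathbf u_L=\bar{\mathbf u}_i^n$, $\mathbf u_R=\bar{\mathbf u}_{i+1}^n$ and $\alpha=\alpha(\mathbf u_L,\mathbf u_R)$. Define
$$\mathbf u^*_{i+1/2}=\frac{\mathbf u_L+\mathbf u_R}{2}-\frac{\mathbf f(\mathbf u_R)-\mathbf f(\mathbf u_L)}{2\alpha}.$$
Let $\mathbf w(x/t)$ be the entropy solution of the Riemann problem. By \eqref{eq:alphamax}, all waves stay inside the fan $|x|\le\alpha t$. Integrating the conservation law over $[-\alpha t,\alpha t]\times[0,t]$ shows that $\mathbf u^*_{i+1/2}$ is exactly the average of $\mathbf w$ over $[-\alpha t,\alpha t]$. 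Integrating the entropy inequality \eqref{eq:ES} over the same box, and then applying Jensen's inequality to the convex $\mathcal U$, gives
$$\mathcal U(\mathbf u^*_{i+1/2})\le \frac{\mathcal U(\mathbf u_L)+\mathcal U(\mathbf u_R)}{2}-\frac{\mathcal F(\mathbf u_R)-\mathcal F(\mathbf u_L)}{2\alpha}.$$
This is the one genuinely analytic ingredient, and I expect it to be the main obstacle. It requires existence of an entropy Riemann solution satisfying the entropy inequality for the given pair, together with the wave-speed bound. I would take both as standing assumptions, as in \cite{carlier2023invariant}.

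\textbf{Step 2: algebraic identities.} A direct computation from \eqref{eq:LF_sys} gives
$$\hat{\mathbf f}(\mathbf u_L,\mathbf u_R)=\mathbf f(\mathbf u_L)-\alpha(\mathbf u^*_{i+1/2}-\mathbf u_L)=\mathbf f(\mathbf u_R)+\alpha(\mathbf u^*_{i+1/2}-\mathbf u_R).$$
In the same way, the proper entropy flux can be written as
$$\hat{\mathcal F}(\mathbf u_L,\mathbf u_R)=\mathcal F(\mathbf u_L)-\alpha(\mathcal U^{\rm avg}_{i+1/2}-\mathcal U(\mathbf u_L))=\mathcal F(\mathbf u_R)+\alpha(\mathcal U^{\rm avg}_{i+1/2}-\mathcal U(\mathbf u_R)),$$
where $\mathcal U^{\rm avg}_{i+1/2}$ denotes the right-hand side of the Step 1 inequality. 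Step 1 then reads simply $\mathcal U(\mathbf u^*_{i+1/2})\le\mathcal U^{\rm avg}_{i+1/2}$.

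\textbf{Step 3: convex splitting and conclusion.} Add and subtract $\mathbf f(\bar{\mathbf u}_i^n)$ and split $\bar{\mathbf u}_i^n$ into two halves. Writing $\alpha_{i\pm1/2}$ for the interface wave-speed estimates, this gives
$$\bar{\mathbf u}_i^{n+1}=\tfrac12\big[(1-2\lambda\alpha_{i+1/2})\bar{\mathbf u}_i^n+2\lambda\alpha_{i+1/2}\mathbf u^*_{i+1/2}\big]+\tfrac12\big[(1-2\lambda\alpha_{i-1/2})\bar{\mathbf u}_i^n+2\lambda\alpha_{i-1/2}\mathbf u^*_{i-1/2}\big].$$
The CFL condition \eqref{eq:CFL_sys} makes every coefficient lie in $[0,1]$, so this is a convex combination. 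Applying Jensen's inequality with the convex $\mathcal U$, then the bound $\mathcal U(\mathbf u^*_{i\pm1/2})\le\mathcal U^{\rm avg}_{i\pm1/2}$, and finally the entropy-flux identities of Step 2, the right-hand side collapses to
$$\mathcal U(\bar{\mathbf u}_i^n)-\lambda\big(\hat{\mathcal F}_{i+1/2}-\hat{\mathcal F}_{i-1/2}\big).$$
This is \eqref{eq:ES1st_sys}. Two side conditions should be checked along the way. First, every intermediate state must lie in $\mathcal D$; this holds because each one is an average of a Riemann solution valued in $\mathcal D$, assuming $\mathcal D$ is convex. Second, the pair $\mathbf u^*_{i\pm1/2}$ must be taken with the $\alpha$ belonging to its own interface, which is exactly how the splitting above is written.
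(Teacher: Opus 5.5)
Your proof is correct. It is the standard Riemann-average (Harten--Lax--van Leer / Guermond--Popov bar-state) argument that underlies the result the paper cites from \cite{carlier2023invariant}; the paper itself gives no proof beyond that citation. Your half-splitting in Step 3 is what produces the constant $1/2$ in \eqref{eq:CFL_sys}; a direct three-term convex combination would only need $\lambda(\alpha_{i-1/2}+\alpha_{i+1/2})\le 1$. Your caveats about existence of the entropy Riemann solution and convexity of $\mathcal D$ are exactly the hypotheses implicit in \eqref{eq:alphamax}.
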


		We can establish the ES-like property of the high-order DG scheme with forward Euler time discretization \eqref{eq:Euler_ubarsys}. The proof is similar to Theorem \ref{thm:ESlike}, and hence omitted for brevity.
	
		\begin{theorem}\label{thm:ESlike_sys}
			The forward Euler DG scheme \eqref{eq:Euler_ubarsys} satisfies the ES-like property
			\begin{equation}\label{eq:ESlike_sys} \mathcal U(\bar {\mathbf u}^{n+1}_i)\le \tilde{\mathcal U}_i^n-\lambda( \hat{\mathcal F}_{i+1/2}-\hat{\mathcal F}_{i-1/2} )\end{equation}
			under the CFL condition 
			$$\lambda\cdot\max\{\alpha_i,\alpha_{i+1/2}\}\le \frac{\omega_N}{2},\quad \lambda\cdot\max\{\alpha_{i-1/2},\alpha_i\}\le \frac{\omega_1}{2}.
			$$
			Here,
			\begin{equation}\label{eq:Ubar_sys} \tilde{\mathcal U}_i^n:=\frac{1}{\Delta x}\int_{I_i}^{\langle N\rangle}\mathcal U(\mathbf u_h^n(x))\mathrm dx,\quad \hat{\mathcal F}_{i+1/2}:=\hat{\mathcal F}(\mathbf u_{i+1/2}^{n,-}, \mathbf u_{i+1/2}^{n,+}), \end{equation}
			and 
            $$\hat {\mathbf f}_i:=\hat {\mathbf f}(\mathbf u_{i-1/2}^{n,+}, \mathbf u_{i+1/2}^{n,-}),\quad \hat{\mathcal F}_i=\hat{\mathcal F}(\mathbf u_{i-1/2}^{n,+},\mathbf u_{i+1/2}^{n,-}),$$
			$$ \alpha_{i+1/2}=\alpha(\mathbf u_{i+1/2}^{n,-},\mathbf u_{i+1/2}^{n,+}),\quad \alpha_i=\alpha(\mathbf u_{i-1/2}^{n,+},\mathbf u_{i+1/2}^{n,-}).$$
		\end{theorem}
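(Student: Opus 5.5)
We mimic the proof of Theorem \ref{thm:ESlike}, with Lemma \ref{lem:ESsys} replacing Lemma \ref{lem:ES1st}. Write $\mathbf u_{i,q}:=\mathbf u_h^n(\hat x_{i,q})$. Since $2N-3\ge k$, the $N$-point Gauss--Lobatto rule integrates $\mathbf u_h^n$ exactly, so $\bar{\mathbf u}_i^n=\sum_q\omega_q\mathbf u_{i,q}$. Note also that $\mathbf u_{i,1}=\mathbf u_{i-1/2}^{n,+}$ and $\mathbf u_{i,N}=\mathbf u_{i+1/2}^{n,-}$. We add and subtract the fictitious interior flux $\hat{\mathbf f}_i=\hat{\mathbf f}(\mathbf u_{i,1},\mathbf u_{i,N})$ in \eqref{eq:Euler_ubarsys}:
\begin{equation*}
\bar{\mathbf u}_i^{n+1}=\sum_{q=2}^{N-1}\omega_q\mathbf u_{i,q}+\omega_N\mathbf H_N+\omega_1\mathbf H_1,
\end{equation*}
where
\begin{equation*}
\mathbf H_N=\mathbf u_{i,N}-\frac{\lambda}{\omega_N}\bigl(\hat{\mathbf f}(\mathbf u_{i,N},\mathbf u_{i+1,1})-\hat{\mathbf f}(\mathbf u_{i,1},\mathbf u_{i,N})\bigr),\qquad
\mathbf H_1=\mathbf u_{i,1}-\frac{\lambda}{\omega_1}\bigl(\hat{\mathbf f}(\mathbf u_{i,1},\mathbf u_{i,N})-\hat{\mathbf f}(\mathbf u_{i-1,N},\mathbf u_{i,1})\bigr).
\end{equation*}

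Each $\mathbf H_N$, $\mathbf H_1$ is a single step of the first-order scheme \eqref{eq:Euler_1st} with effective ratio $\lambda/\omega_N$ (resp.\ $\lambda/\omega_1$). For $\mathbf H_N$ the three-point stencil is $(\mathbf u_{i,1},\mathbf u_{i,N},\mathbf u_{i+1,1})$, and for $\mathbf H_1$ it is $(\mathbf u_{i-1,N},\mathbf u_{i,1},\mathbf u_{i,N})$. The left neighbour $\mathbf u_{i,1}$ in the first stencil, and the right neighbour $\mathbf u_{i,N}$ in the second, are artificial. However, Lemma \ref{lem:ESsys} is a pointwise statement about any three states; it uses only convexity of $\mathcal U$ and the wave-speed bound \eqref{eq:alphamax} on each pair. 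The required CFL conditions are therefore
\begin{equation*}
\frac{\lambda}{\omega_N}\max\{\alpha_i,\alpha_{i+1/2}\}\le\frac12,\qquad \frac{\lambda}{\omega_1}\max\{\alpha_{i-1/2},\alpha_i\}\le\frac12,
\end{equation*}
which are exactly the hypotheses of the theorem. Applying the lemma gives
\begin{equation*}
\mathcal U(\mathbf H_N)\le\mathcal U(\mathbf u_{i,N})-\frac{\lambda}{\omega_N}\bigl(\hat{\mathcal F}_{i+1/2}-\hat{\mathcal F}_i\bigr),\qquad
\mathcal U(\mathbf H_1)\le\mathcal U(\mathbf u_{i,1})-\frac{\lambda}{\omega_1}\bigl(\hat{\mathcal F}_i-\hat{\mathcal F}_{i-1/2}\bigr).
\end{equation*}

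Since the weights $\omega_q>0$ sum to one, Jensen's inequality for the convex $\mathcal U$ gives
\begin{equation*}
\mathcal U(\bar{\mathbf u}_i^{n+1})\le\sum_{q=2}^{N-1}\omega_q\mathcal U(\mathbf u_{i,q})+\omega_N\mathcal U(\mathbf H_N)+\omega_1\mathcal U(\mathbf H_1).
\end{equation*}
Inserting the two bounds above, the factors $\omega_N$ and $\omega_1$ cancel the $1/\omega_N$ and $1/\omega_1$. The interior entropy fluxes $\hat{\mathcal F}_i$ appear with opposite signs and telescope away, which leaves $\tilde{\mathcal U}_i^n-\lambda(\hat{\mathcal F}_{i+1/2}-\hat{\mathcal F}_{i-1/2})$, i.e.\ \eqref{eq:ESlike_sys}.

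The main point requiring care is the validity of Lemma \ref{lem:ESsys} with the artificial flux. One must check that its proof needs no relation among the three states, only the admissibility $\alpha(\mathbf u_{i,1},\mathbf u_{i,N})\ge$ the maximal wave speed of the Riemann problem between those two states. This holds by the definition \eqref{eq:alphamax} of $\alpha_i$. The natural route is the standard Riemann-average argument: write each first-order update as a convex combination of averaged exact Riemann solutions, and apply Jensen together with the entropy inequality for those Riemann solutions. This route also needs $\mathbf H_N,\mathbf H_1\in\mathcal D$, which follows from the same CFL conditions whenever $\mathcal D$ is convex and invariant.
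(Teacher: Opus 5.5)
Your proposal is correct and is essentially the paper's proof. The paper omits it as identical to that of Theorem~\ref{thm:ESlike}, namely the Gauss--Lobatto splitting with the fictitious interior flux $\hat{\mathbf f}_i$, then Lemma~\ref{lem:ESsys} applied with effective ratios $\lambda/\omega_N$ and $\lambda/\omega_1$ (which yields exactly the stated CFL conditions), then Jensen's inequality with $\hat{\mathcal F}_i$ telescoping out. Your added check that Lemma~\ref{lem:ESsys} holds pointwise for any three admissible states, and so applies to the artificial stencils with $\mathbf H_N,\mathbf H_1\in\mathcal D$, makes explicit a point the paper leaves implicit.
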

		
		Theorem \ref{thm:ESlike_sys} implies that, to achieve entropy stability, we can scale the intermediate DG solution towards the cell average by
		\begin{equation}\label{eq:limiter_sys}
			\mathbf u_h^{n+1}|_{I_i}=\bar{\mathbf u}_i^{n+1}+\theta_i(\mathbf u_h^{n+1,(\mathrm{pre})}|_{I_i}-\bar{\mathbf u}_i^{n+1}),\quad \theta_i=\frac{\mathcal U^{\mathrm{up}}-\mathcal U^{\mathrm{1st}}}{\mathcal U^{\mathrm{high}}-\mathcal U^{\mathrm{1st}}},
		\end{equation}
		where
		\begin{equation}
			\mathcal U^{\mathrm{1st}}=\mathcal U(\bar{\mathbf u}_i^{n+1}),\quad  \mathcal U^{\mathrm{high}}=\tilde{\mathcal U}_i^{n+1,(\mathrm{pre})},\quad \mathcal U^{\mathrm{up}}=\tilde{\mathcal U}_i^n-\lambda(\hat{\mathcal F}_{i+1/2}-\hat{\mathcal F}_{i-1/2}),
		\end{equation}
		as summarized in the following theorem.
		
		\begin{theorem}
			The forward Euler DG scheme \eqref{eq:Eulersys} equipped with the ES limiter \eqref{eq:limiter_sys} is ES in the sense that
			\begin{equation}\label{eq:ESEF_sys}
				\int_{\mathbb R}^{\langle N\rangle}\mathcal U\left(\mathbf u_h^{n+1}\right)\mathrm dx\le \int_{\mathbb R}^{\langle N\rangle}\mathcal U(\mathbf u_h^n)\mathrm dx.
			\end{equation}
		\end{theorem}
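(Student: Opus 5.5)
The plan is to mirror the scalar argument: first show that the limited solution satisfies a cell entropy inequality on every $I_i$, then sum over $i$ and use telescoping of the numerical entropy fluxes.

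\textbf{Step 1 (admissibility of $\theta_i$).} By Theorem \ref{thm:ESlike_sys}, under its CFL condition we have $\mathcal U^{\mathrm{1st}}=\mathcal U(\bar{\mathbf u}_i^{n+1})\le \mathcal U^{\mathrm{up}}$. By Jensen's inequality applied to the Gauss--Lobatto quadrature (positive weights summing to one, exact for the cell average since $2N-3\ge k$), we also have $\mathcal U^{\mathrm{1st}}\le \mathcal U^{\mathrm{high}}$; this is the system analogue of Lemma \ref{lem:ES}. Hence $\theta_i\ge 0$. As in \eqref{eq:theta_exp}, we take the minimum with $1$, and set $\theta_i=1$ when $\mathcal U^{\mathrm{high}}\le\mathcal U^{\mathrm{up}}$ or the denominator is small. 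In that case the inequality holds trivially, because then $\tilde{\mathcal U}_i^{n+1}=\mathcal U^{\mathrm{high}}\le \mathcal U^{\mathrm{up}}$. The only delicate case is $\mathcal U^{\mathrm{high}}=\mathcal U^{\mathrm{1st}}$ exactly. Since $\mathcal U^{\mathrm{1st}}\le\mathcal U^{\mathrm{up}}$, we then have $\mathcal U^{\mathrm{high}}\le\mathcal U^{\mathrm{up}}$, so $\theta_i=1$ works and this case is harmless.

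\textbf{Step 2 (cell inequality).} When $\theta_i\in[0,1)$, convexity of $\mathcal U$ applied pointwise at each quadrature node gives
$$\mathcal U\bigl(\bar{\mathbf u}_i^{n+1}+\theta_i(\mathbf u_h^{n+1,(\mathrm{pre})}-\bar{\mathbf u}_i^{n+1})\bigr)\le \theta_i\,\mathcal U(\mathbf u_h^{n+1,(\mathrm{pre})})+(1-\theta_i)\,\mathcal U(\bar{\mathbf u}_i^{n+1}).$$
Applying the quadrature $\frac{1}{\Delta x}\int_{I_i}^{\langle N\rangle}$, whose weights are positive, yields
$$\tilde{\mathcal U}_i^{n+1}\le\theta_i\,\mathcal U^{\mathrm{high}}+(1-\theta_i)\,\mathcal U^{\mathrm{1st}}=\mathcal U^{\mathrm{up}},$$
by the choice of $\theta_i$. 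Therefore
$$\tilde{\mathcal U}_i^{n+1}\le \tilde{\mathcal U}_i^n-\lambda(\hat{\mathcal F}_{i+1/2}-\hat{\mathcal F}_{i-1/2})$$
holds for every $i$. The limiter also preserves cell averages, so conservation is kept.

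\textbf{Step 3 (summation).} Multiply by $\Delta x$ and sum over $i$. The flux differences telescope to zero, assuming a compactly supported solution or periodic boundaries, which is implicit in the global statement. This gives \eqref{eq:ESEF_sys}.

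The main obstacle is Step 1: it needs the ES-like bound of Theorem \ref{thm:ESlike_sys} together with the Jensen bound for the vector-valued convex $\mathcal U$, and it must handle the degenerate denominator. Everything else is a direct transcription of the scalar proof.
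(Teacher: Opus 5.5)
Your proposal is correct and follows the same route as the paper, which proves the scalar analogue and treats the system case as identical: the ES-like bound of Theorem~\ref{thm:ESlike_sys} plus Jensen give $\theta_i\ge 0$, convexity at the Gauss--Lobatto nodes gives $\tilde{\mathcal U}_i^{n+1}\le\mathcal U^{\mathrm{up}}$, and the entropy fluxes telescope upon summation. You were right to insert the $\min\{\cdot,1\}$ that \eqref{eq:limiter_sys} omits, since for $\theta_i>1$ the convexity bound reverses; note, however, that the $\varepsilon$-truncation you fold into the $\theta_i=1$ case only guarantees the cell inequality up to $\varepsilon$, so the exact statement rests on your separate handling of an exactly vanishing denominator.
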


		\subsection{High-order ES scheme}
		
		We now introduce the high-order ES scheme for systems using SSP multistep time discretization \eqref{eq:MS}. Applying this time discretization to the semi-discrete DG system \eqref{eq:DG_semisys}, we obtain the following high-order fully discrete scheme:
		\begin{equation}\label{eq:MS_sys}
			\mathbf u_h^{n+1,(\mathrm{pre})}=\sum_{l=1}^m{\left( \alpha _l\mathbf u_h^{n+1-l}+\Delta t\beta _l\mathcal{L}_h^l \left( \mathbf u_h^{n+1-l} \right) \right)}.
		\end{equation}
		First, the ES-like property of this method can be established as follows.
        
		\begin{theorem}
			The SSP multistep DG scheme \eqref{eq:MS_sys} satisfies the ES-like property
			\begin{equation}
				\mathcal{U} \left( \bar{\mathbf u}_{i}^{n+1} \right) \le \sum_{l=1}^m{\left( \alpha _l\,\tilde {\mathcal{U}}_{i}^{n+1-l}-\beta _l\lambda \left( \hat{\mathcal{F}}_{i+1/2}^{n+1-l}-\hat{\mathcal{F}}_{i-1/2}^{n+1-l} \right) \right)}
			\end{equation}
			with the CFL condition 
			\begin{equation}\label{eq:CFL_syshigh}
				\lambda \le \frac{\alpha _l}{2\left|\beta _l\right|}\min \left\{ A_i^{n+1-l},B_i^{n+1-l} \right\} ,\quad l=1,\cdots ,m,\quad \alpha_l\ne 0,
			\end{equation}
			where
			$$A_i^{n+1-l}=\left. \frac{\omega_N}{\max\{\alpha_{i},\alpha_{i+1/2}\} }\right|_{t=t^{n+1-l}}, \quad B_i^{n+1-l}=\left.\frac{\omega_1}{\max\{ \alpha_{i-1/2},\alpha_i \}}\right|_{t=t^{n+1-l}}.
			$$
		\end{theorem}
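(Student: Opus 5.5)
The proof writes the multistep update as a convex combination of forward Euler steps, as in the scalar case. We then apply Theorem \ref{thm:ESlike_sys}, with the forward Euler time step rescaled, to each of these steps.

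\textbf{Step 1: decompose the cell average.} Take $w=1$ in \eqref{eq:DG_sys} and average \eqref{eq:MS_sys} over $I_i$. This gives
$$
\bar{\mathbf u}_i^{n+1}=\sum_{l=1}^m \alpha_l\Big(\bar{\mathbf u}_i^{n+1-l}-\frac{\beta_l}{\alpha_l}\lambda\big(\hat{\mathbf f}_{i+1/2}^{n+1-l}-\hat{\mathbf f}_{i-1/2}^{n+1-l}\big)\Big)=:\sum_{l=1}^m\alpha_l\,\bar{\mathbf G}_l .
$$
Two facts justify this. First, consistency of \eqref{eq:MS_sys} gives $\sum_l\alpha_l=1$. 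Second, any term with $\alpha_l=0$ must have $\beta_l=0$; this holds for SSP multistep methods, and otherwise the term must be absorbed or excluded. For $\beta_l<0$ the operator $\tilde{\mathcal L}_h$ is used. It corresponds to the time-reversed problem, so $-\beta_l\lambda$ plays the role of a positive step for the downwind-biased flux. This is handled exactly as in the scalar multistep theorem.

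\textbf{Step 2: apply Theorem \ref{thm:ESlike_sys} to each Euler step.} Each $\bar{\mathbf G}_l$ is a forward Euler DG cell-average update starting from $\mathbf u_h^{n+1-l}$ with effective ratio $\lambda_l=\lambda|\beta_l|/\alpha_l$. Theorem \ref{thm:ESlike_sys} requires
$$
\lambda_l\max\{\alpha_i,\alpha_{i+1/2}\}\le \omega_N/2,\qquad \lambda_l\max\{\alpha_{i-1/2},\alpha_i\}\le \omega_1/2 .
$$
This requirement is equivalent to $\lambda\le \frac{\alpha_l}{2|\beta_l|}\min\{A_i^{n+1-l},B_i^{n+1-l}\}$, which is precisely \eqref{eq:CFL_syshigh}. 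Internally, the theorem splits $\bar{\mathbf u}$ via Gauss--Lobatto weights into interior nodes and two boundary pieces $\mathbf H_1,\mathbf H_N$, and applies Lemma \ref{lem:ESsys} to each boundary piece. Its conclusion, written with $\lambda_l$, yields
$$
\mathcal U(\bar{\mathbf G}_l)\le \tilde{\mathcal U}_i^{n+1-l}-\frac{\beta_l}{\alpha_l}\lambda\big(\hat{\mathcal F}_{i+1/2}^{n+1-l}-\hat{\mathcal F}_{i-1/2}^{n+1-l}\big).
$$

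\textbf{Step 3: combine by convexity.} Jensen's inequality with weights $\alpha_l\ge0$ summing to one gives $\mathcal U(\bar{\mathbf u}_i^{n+1})\le\sum_l\alpha_l\,\mathcal U(\bar{\mathbf G}_l)$. Inserting the bound from Step 2 produces the claimed inequality. One check is that every $\bar{\mathbf G}_l$ lies in $\mathcal D$, since $\mathcal U$ is only defined there. Invariance of the domain is part of the hypothesis behind Lemma \ref{lem:ESsys}; the CFL condition together with the wave-speed bound \eqref{eq:alphamax} ensures each boundary piece is a Riemann-averaged state.

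\textbf{Main obstacle.} The delicate point is the $\beta_l<0$ terms. I must verify that $\tilde{\mathcal L}_h$ with the reversed flux admits a proper entropy flux whose interface values agree with $\hat{\mathcal F}^{n+1-l}_{i\pm1/2}$ with the sign carried by $\beta_l$. For the LF flux this is plausible by symmetry of \eqref{eq:LF_sys} under $x\to -x$. I would check it first, by applying Lemma \ref{lem:ESsys} to the mirrored problem.
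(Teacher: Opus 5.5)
Your Steps 1--3 follow the paper's route. The paper gives no separate proof of this theorem. For the scalar analogue it says the bound follows directly from writing the SSP multistep update as a convex combination of forward Euler steps, which is what you do. Your CFL bookkeeping with the rescaled ratio $\lambda|\beta_l|/\alpha_l$ reproduces \eqref{eq:CFL_syshigh} exactly. The one piece you leave open, the $\beta_l<0$ terms, is also left implicit in the paper. However, the check you plan there would fail as you phrased it, so it has to be closed differently.

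Concretely, $\tilde{\mathcal L}_h$ is minus the DG--LF operator for the flux $-\mathbf f$. Equivalently, it is the DG operator for $\mathbf f$ with the anti-dissipative flux $\check{\mathbf f}(\mathbf u^-,\mathbf u^+)=\tfrac12\big(\mathbf f(\mathbf u^+)+\mathbf f(\mathbf u^-)\big)+\tfrac{\alpha}{2}(\mathbf u^+-\mathbf u^-)$. Then $\mathbf u_h-|\beta_l|\Delta t\,\tilde{\mathcal L}_h(\mathbf u_h)$ is a forward Euler step for $\mathbf u_t+(-\mathbf f(\mathbf u))_x=\mathbf 0$. Apply Theorem~\ref{thm:ESlike_sys} to that system with entropy pair $(\mathcal U,-\mathcal F)$ and ratio $\lambda|\beta_l|/\alpha_l$; the $\alpha$'s are the same, hence so is the CFL. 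This gives
\begin{gather*}
\mathcal U(\bar{\mathbf G}_l)\le \tilde{\mathcal U}_i^{n+1-l}-\frac{\beta_l}{\alpha_l}\lambda\big(\check{\mathcal F}_{i+1/2}^{n+1-l}-\check{\mathcal F}_{i-1/2}^{n+1-l}\big),\\
\check{\mathcal F}(\mathbf u^-,\mathbf u^+)=\tfrac12\big(\mathcal F(\mathbf u^+)+\mathcal F(\mathbf u^-)\big)+\tfrac{\alpha}{2}\big(\mathcal U(\mathbf u^+)-\mathcal U(\mathbf u^-)\big),
\end{gather*}
where $\check{\mathcal F}$ is the proper entropy flux of $\check{\mathbf f}$.

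For symmetric $\alpha$ one has $\check{\mathcal F}(\mathbf u^-,\mathbf u^+)=\hat{\mathcal F}(\mathbf u^+,\mathbf u^-)$. So the mirror symmetry you invoke swaps the arguments; it does not reproduce $\hat{\mathcal F}^{n+1-l}_{i\pm1/2}=\hat{\mathcal F}(\mathbf u^-,\mathbf u^+)$. The difference $\check{\mathcal F}-\hat{\mathcal F}=\alpha\big(\mathcal U(\mathbf u^+)-\mathcal U(\mathbf u^-)\big)$ has no sign. Hence the inequality cannot be obtained with the original $\hat{\mathcal F}$ in the $\beta_l<0$ terms. There, $\hat{\mathcal F}^{n+1-l}$ must be read as the proper entropy flux of the numerical flux actually used by $\mathcal L_h^l$. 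This costs nothing downstream: the global ES estimate and the Lax--Wendroff argument use only consistency, Lipschitz continuity and telescoping of the entropy fluxes, all of which $\check{\mathcal F}$ has.

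There is one further hypothesis hidden in this step. Lemma~\ref{lem:ESsys} for the flux $-\mathbf f$ requires $\alpha(\mathbf u^-,\mathbf u^+)$ to bound the wave speeds of the $\mathbf f$-Riemann problem with the two states swapped, since the $(-\mathbf f)$-Riemann solution is its mirror image. This holds when the wave-speed estimate is symmetric in its arguments, which is also what your mirror symmetry of \eqref{eq:LF_sys} needs.
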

        
		Likewise, after the time discretization, using the limiter \eqref{eq:limiter_sys} to scale the polynomial, where $\mathcal U^{\mathrm{up}}$ is replaced by the high-order approximation
		$$\mathcal U^{\mathrm{up}} = \displaystyle \sum_{l=1}^m{\left( \alpha _l\,\tilde {\mathcal{U}}_{i}^{n+1-l}-\beta _l\lambda \left( \hat{\mathcal{F}}_{i+1/2}^{n+1-l}-\hat{\mathcal{F}}_{i-1/2}^{n+1-l} \right) \right)} .$$
		The ES property is given by following.

		\begin{theorem}
			The numerical solution of SSP multistep DG scheme \eqref{eq:MS_sys} with ES limiter \eqref{eq:limiter_sys} is ES in the sense of
			\begin{equation}\label{eq:ESMS_sys}
				\int_{\mathbb R}^{\langle N\rangle}{\mathcal{U} \left( \mathbf u_{h}^{n+1} \right) \mathrm{d}x}\le \max_{1\le l\le m} \int_{\mathbb R}^{\langle N\rangle}{\mathcal{U} \left( \mathbf u_{h}^{n+1-l} \right) \mathrm{d}x}.
			\end{equation}
		\end{theorem}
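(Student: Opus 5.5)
The argument copies the scalar multistep proof. The core is the cell-wise inequality
\begin{equation*}
\tilde{\mathcal U}_i^{n+1}\le \sum_{l=1}^m\left(\alpha_l\,\tilde{\mathcal U}_i^{n+1-l}-\beta_l\lambda\left(\hat{\mathcal F}_{i+1/2}^{n+1-l}-\hat{\mathcal F}_{i-1/2}^{n+1-l}\right)\right)=\mathcal U^{\mathrm{up}}
\end{equation*}
for every cell $I_i$. Summing it over $i$ and using consistency and $\sum_l\alpha_l=1$ then gives the result.

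\textbf{Step 1: the cell inequality.} By the ES-like property of the SSP multistep DG scheme \eqref{eq:MS_sys} (stated just above, under \eqref{eq:CFL_syshigh}), we have $\mathcal U^{\mathrm{1st}}=\mathcal U(\bar{\mathbf u}_i^{n+1})\le\mathcal U^{\mathrm{up}}$. By convexity of $\mathcal U$ on $\mathcal D$, with the Gauss--Lobatto weights summing to one and the quadrature exact for the cell average (Jensen, as in Lemma \ref{lem:ES}), we have $\mathcal U^{\mathrm{1st}}\le\mathcal U^{\mathrm{high}}$.

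If $\mathcal U^{\mathrm{high}}\le\mathcal U^{\mathrm{up}}$, we take $\theta_i=1$, understood as the truncated version of \eqref{eq:limiter_sys} analogous to \eqref{eq:theta_exp}. This also covers the degenerate case $\mathcal U^{\mathrm{high}}=\mathcal U^{\mathrm{1st}}$. Then $\tilde{\mathcal U}_i^{n+1}=\mathcal U^{\mathrm{high}}\le\mathcal U^{\mathrm{up}}$.

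Otherwise $\theta_i\in[0,1)$. At each quadrature node, $\mathbf u_h^{n+1}=(1-\theta_i)\bar{\mathbf u}_i^{n+1}+\theta_i\mathbf u_h^{n+1,(\mathrm{pre})}$, and convexity of $\mathcal U$ gives
\begin{equation*}
\tilde{\mathcal U}_i^{n+1}\le(1-\theta_i)\,\mathcal U^{\mathrm{1st}}+\theta_i\,\mathcal U^{\mathrm{high}}=\mathcal U^{\mathrm{up}}
\end{equation*}
by the choice of $\theta_i$. This is the vector analogue of the scalar theorem showing that \eqref{eq:theta_exp} satisfies \eqref{eq:theta}.

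One point needs checking here: the convex combination must stay in $\mathcal D$ so that $\mathcal U$ is defined and convex there. This holds if $\mathcal D$ is convex and both $\bar{\mathbf u}_i^{n+1}$ and the nodal values of $\mathbf u_h^{n+1,(\mathrm{pre})}$ lie in $\mathcal D$, which is an implicit standing assumption, as in the scalar case.

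\textbf{Step 2: summation.} Multiply the cell inequality by $\Delta x$ and sum over $i$. The numerical entropy flux terms $\hat{\mathcal F}^{n+1-l}_{i\pm1/2}$ telescope for each fixed $l$, since each interface flux is single-valued. We assume compact support or periodicity so the sums are finite, as in the scalar proof. This leaves
\begin{equation*}
\int_{\mathbb R}^{\langle N\rangle}\mathcal U(\mathbf u_h^{n+1})\,\mathrm dx\le\sum_{l=1}^m\alpha_l\int_{\mathbb R}^{\langle N\rangle}\mathcal U(\mathbf u_h^{n+1-l})\,\mathrm dx.
\end{equation*}
Since $\alpha_l\ge0$ and $\sum_l\alpha_l=1$ (consistency of \eqref{eq:multitrun} applied to constants), the right-hand side is bounded by the maximum over $l$. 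This is \eqref{eq:ESMS_sys}.

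\textbf{Main obstacle.} The only delicate point is Step 1: confirming that the ES-like inequality for systems really yields $\mathcal U^{\mathrm{up}}\ge\mathcal U^{\mathrm{1st}}$, so that $\theta_i\ge0$, and that the limiter is well defined in the degenerate case. Both follow from the preceding ES-like theorem under \eqref{eq:CFL_syshigh} together with the truncation convention. Everything after that is telescoping.
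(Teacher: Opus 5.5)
Your proof is correct and takes the paper's route. The limiter gives the cell inequality $\tilde{\mathcal U}_i^{n+1}\le\mathcal U^{\mathrm{up}}$, which is the system analogue of the scalar result that \eqref{eq:theta_exp} satisfies \eqref{eq:theta}; summing over cells with telescoping entropy fluxes, $\alpha_l\ge0$ and $\sum_l\alpha_l=1$ then yields \eqref{eq:ESMS_sys}. You are also right that $\theta_i$ in \eqref{eq:limiter_sys} must be capped at $1$ as in \eqref{eq:theta_exp}, since for $\theta_i>1$ convexity only gives the reverse inequality $\tilde{\mathcal U}_i^{n+1}\ge\mathcal U^{\mathrm{up}}$.
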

		
		\begin{remark}
			For systems, multiple entropy inequalities can also be enforced simultaneously by calculating the minimum limiting coefficient, as discussed in Section \ref{sec:multi}.
		\end{remark}
		
		\subsection{Compatibility with other limiters}
		
		Even if a scheme satisfies entropy stability, numerical oscillations or unphysical phenomena such as negative pressure and negative density may still occur for problems with strong discontinuities. Therefore, additional limiters are often necessary. However, there is a critical issue: Applying other limiters may increase the numerical entropy. To avoid this problem, we require that shock limiters, bound-preserving (BP) and positivity-preserving (PP) limiters should also be applied in the same form as \eqref{eq:limiter_sys}. 
		
		It is discussed in \cite{chen2017entropy} that BP and PP limiter \cite{zhang2012maximum, zhang2010positivity} will not increase the entropy. On the other hand, for shock  limiters, characteristic decomposition-based limiters such as the TVB limiter can not guarantee strict entropy decrease for systems, as discussed in \cite{chen2017entropy}. In recent years, many jump-based filtering moment limiters have been proposed, with the pioneering work being the OEDG method \cite{peng2024oedg}. However, the classical OE limiting technique uses different coefficients to limit different order moments, which violates the requirement of Lemma \ref{lem:ES}, and thus may cause entropy increase. In \cite{liu2025globally}, a modified approach was proposed where all moments are limited using the same coefficient, thereby achieving entropy stability. 
		
		When multiple limiters are required in practical computations, the order of application is crucial.  In this work, to verify the robustness of the proposed scheme, we do not add the shock limiter for all examples. Besides, the BP/PP limiter are still needed for some extreme examples to keep the physical structure. In algorithm implementation, the BP/PP limiters \cite{zhang2012maximum, zhang2010positivity} are applied after the ES limiter when necessary.

		\section{Numerical tests}\label{sec5}
		
		In this work, we use the 6-step, 4th-order SSP multistep method \eqref{eq:MS4}. The time step is computed by
		$$\Delta t= \mathrm{CFL}\cdot\frac{\Delta x}{\alpha_x}$$
		for 1D cases, and 
		$$\Delta t= \frac{\mathrm{CFL}}{\alpha_x/\Delta x+\alpha _y/\Delta y}$$
		for 2D cases. Here, $\alpha_x$ and $\alpha_y$ are the estimates of the global maximum absolutely wave speed in $\Omega$ of the initial data. The CFL number is taken as $0.01$ for stability. We found that this is sufficient to satisfy the CFL condition \eqref{eq:CFL_high} and \eqref{eq:CFL_syshigh}. The $\mathbb{P}^2$ approximation is employed throughout, and no additional limiters are used beyond the ES limiter unless explicitly noted. 
		
		For such systems, the Euler equations are considered:
		\begin{equation}
			\left[ \begin{array}{c}
				\rho\\
				\rho \mathbf u\\
				\mathcal{E}\\
			\end{array} \right] _t+\nabla\cdot\left[ \begin{array}{c}
				\rho \mathbf u\\
				\rho \mathbf u\otimes\mathbf u+p\mathbf I_d\\
				\mathbf u\left( \mathcal{E} +p \right)\\
			\end{array} \right]=\mathbf{0}.
		\end{equation}
		Here, $\rho $ denotes the density, $\mathbf u$ is the velocity, $\mathcal E$ is the total energy, $p$ is the pressure, $\mathbf I_d$ denotes the $d\times d$ identity matrix. The system is closed with the ideal gas EOS
		\begin{equation}
			\mathcal E=\frac{p}{\gamma - 1}+\frac{1}{2}\rho \left\|\mathbf u\right\|^2.
		\end{equation}
		Unless otherwise specified, The gas constant $\gamma$ is set to 1.4. A widely used entropy pair for the Euler equations is
		\begin{equation}
			\mathcal U=-\frac{\rho s}{\gamma - 1},\quad \boldsymbol{\mathcal F}=-\frac{\rho s}{\gamma- 1}\mathbf u,\quad s=\ln (p\rho^{-\gamma}).
		\end{equation}
        For Euler equation, the parameter $\alpha$ in \eqref{eq:LF_sys} can be computed by the RRF approximation \cite{chen2017entropy}, which strictly ensure \eqref{eq:alphamax}  for $1<\gamma\le 5/3$.

		\subsection{Scalar problems}
		
		\begin{Ex}[Linear equation]\label{ex:linear}
		We begin by testing the linear advection equation,
		$$ u_t+u_x=0,$$
		which serves as a model problem for hyperbolic conservation laws. The exact solution is taken as $u(x,t)=\sin (x-t)^4$. To test the performance of our scheme, we choose two entropy pairs:
		$$ (\mathcal U^{(1)},\mathcal F^{(1)})=\left( e^u,e^u \right),\quad (\mathcal U^{(2)},\mathcal F^{(2)})=\left( \frac{u^2}{2},\frac{u^2}{2} \right). $$
		We use ``Base" to denote the standard SSP multistep DG scheme, ``ES1'' to denote the scheme that only satisfying the entropy inequality of the first entropy pair, and ``ES2" to denote the scheme satisfying both two entropy inequalities. The $L^2$ errors and orders are presented in Table \ref{tab1}. It can be seen that the ES limiter maintains the optimal $(k+1)$th-order for $k=1,2,3$. Moreover, the error is quite close to the Base scheme.  To demonstrate the effectiveness of the limiter, we present the distribution of limited cells for ES1 scheme with $N_x=80$ in Fig. \ref{figlinear}. The results of ES2 scheme are similar, which are not showed here to avoid repetition.  
		\end{Ex}

		\begin{table}[htb!]
			\centering
			\caption{Example \ref{ex:linear}: Linear equation. $L^2$ errors and orders of different schemes at $T=1$.}
			\setlength{\tabcolsep}{3.2mm}{
				\begin{tabular}{|c|cc|cc|cc|}
					\hline $N$ & Base & Order & ES1 & Order & ES2 & Order  \\ 
					\hline
					\multicolumn{7}{|c|}{$k=1$}\\
					
					\hline  
					20 &3.2080E-02 & -- &3.2080E-02 & -- &3.2080E-02 &-- \\
					\hline  
					40 &6.9836E-03 & 2.1996 &6.9863E-03 & 2.1991 &6.9872E-03 &2.1989 \\
					\hline  
					80 &1.6697E-03 & 2.0644 &1.6698E-03 & 2.0649 &1.6698E-03 &2.0650 \\
					\hline  
					160 &4.1239E-04 & 2.0175 &4.1239E-04 & 2.0176 &4.1239E-04 &2.0176 \\
					\hline
					
					\multicolumn{7}{|c|}{$k=2$}\\		
					\hline  
					20 &2.4873E-03 & -- &2.4873E-03 & -- &2.4873E-03 &-- \\
					\hline  
					40 &3.3698E-04 & 2.8838 &3.4592E-04 & 2.8461 &3.4810E-04 &2.8370 \\
					\hline  
					80 &4.3265E-05 & 2.9614 &4.4242E-05 & 2.9669 &4.4301E-05 &2.9741 \\
					\hline  
					160 &5.4480E-06 & 2.9894 &5.5249E-06 & 3.0014 &5.5277E-06 &3.0026 \\
					\hline
					\multicolumn{7}{|c|}{$k=3$}\\
					
					\hline  
					20 &1.6241E-04 & -- &1.6393E-04 & -- &1.6396E-04 &-- \\
					\hline  
					40 &9.7619E-06 & 4.0564 &9.8515E-06 & 4.0566 &9.8538E-06 &4.0565 \\
					\hline  
					80 &5.9316E-07 & 4.0407 &6.0716E-07 & 4.0202 &6.0721E-07 &4.0204 \\
					\hline  
					160 &3.7012E-08 & 4.0024 &3.7021E-08 & 4.0356 &3.7025E-08 &4.0356 \\
					\hline
					
			\end{tabular}} 
			\label{tab1}
		\end{table}
		
		\begin{figure}[htb!]
			\centering
			\subfigure[$k=1$.]{
				\includegraphics[width=0.31\linewidth]{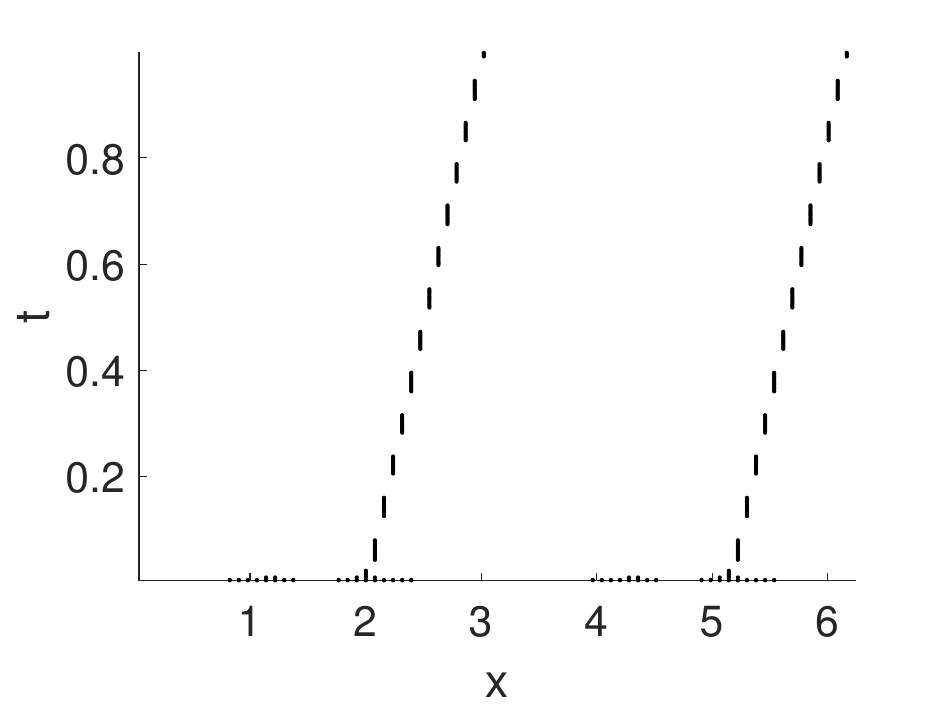}}
			\subfigure[$k=2$.]{
				\includegraphics[width=0.31\linewidth]{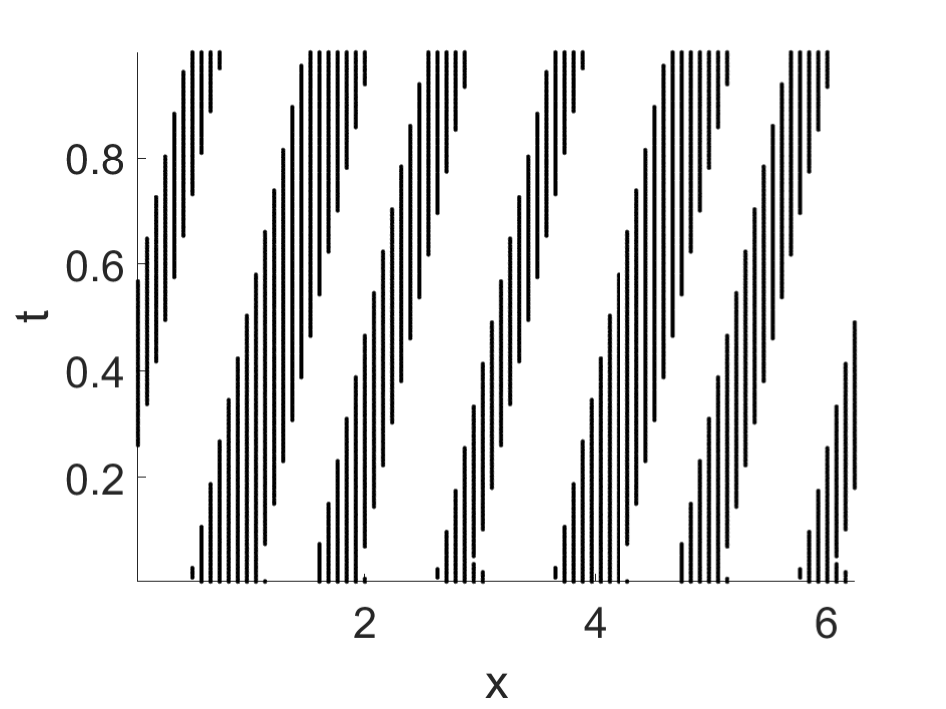}}
			\subfigure[$k=3$.]{
				\includegraphics[width=0.31\linewidth]{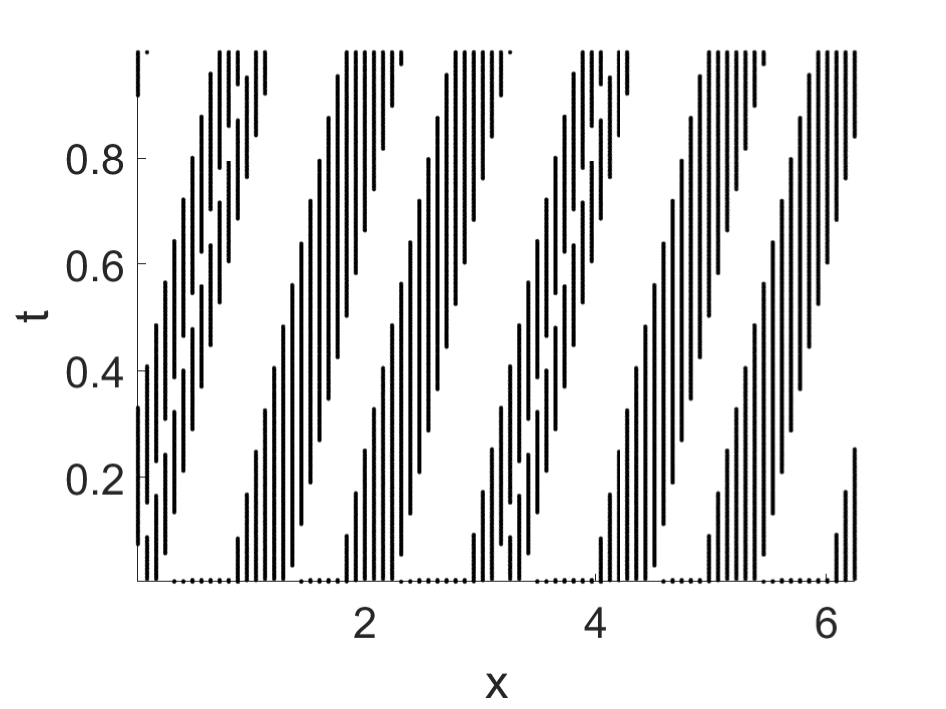}}
			\caption{Example \ref{ex:linear}: Linear equation. The $(x,t)$-distribution of limited cells of ES1 scheme with $N_x=80$.}
			\label{figlinear}
		\end{figure}

		\begin{Ex}[Burgers' equation]\label{ex:burgers}
		We consider the the Burgers' equation
		$$ u_t + \left( \frac{u^2}{2} \right)_x=0. $$
		This flux function is genuinely nonlinear, which makes this equation more challenging than the linear equation. The computational domain is $\Omega=[0,2\pi]$ with periodic boundaries. The initial data is $u(x,0)=0.5 + \sin x$. The discontinuity will appear at $T_b=1$. We also choose two types of entropy pairs 
		$$   (\mathcal U^{(1)},\mathcal F^{(1)})=\left( e^u,(u-1)e^u \right),\quad  (\mathcal U^{(2)},\mathcal F^{(2)})=\left( \frac{u^2}{2},\frac{u^3}{3} \right). $$
		Similarly, We denote “Base”, “ES1” and “ES2” to be the standard SSP multistep DG scheme, the scheme only satisfying the entropy inequality of the first entropy pair, and the scheme satisfying both two entropy inequalities, respectively.
		
		First, we simulate this problem until $T=0.6$. The exact solution remains smooth at this moment. The errors and orders are presented in Table \ref{tab2} for $k=1,2,3$, and the optimal convergence rates are obtained. 
		
		Next, we simulate this problem until $T=1.075$. At this moment, a shock wave is produced. Both the Base scheme and ES1 scheme are tested for this case with $N_x=80$ and $\mathbb P^2$ approximation. 
		In Fig. \ref{figburgers1}, we plot the cell entropy inequality violation, defined as
		$$ \tilde{\mathcal U}_i -\mathcal U^{\mathrm{up}}|_{I_i}$$
		in each cell $I_i$
		for $(\mathcal U^{(2)},\mathcal F^{(2)})$ at the final step. For this test case, the result of ES2 is similar to ES1, and is not presented here.
		A positive value of this quantity indicates a violation of the entropy inequality at the corresponding cell. The numerical results confirm that our proposed scheme ensures the discrete entropy inequality is strictly satisfied across all computational cells. Fig. \ref{figburgers2} shows the evolution of total entropy over time. In contrast to the base scheme, whose total entropy increases after shock formation, our scheme maintains a non-increasing total entropy, in strict accordance with the underlying entropy condition. 
		
		\end{Ex}

		\begin{table}[htb!]
			\centering
			\caption{Example \ref{ex:burgers}: Burgers' equation. $L^2$ errors and orders of different schemes at $T=0.6$.}
			\setlength{\tabcolsep}{3.2mm}{
				\begin{tabular}{|c|cc|cc|cc|}
					\hline $N_x$ & Base & Order & ES1 & Order & ES2 & Order  \\ 
					\hline
					\multicolumn{7}{|c|}{$k=1$}\\
					
					\hline  
					20 &1.8040E-02 & -- &1.8047E-02 & -- &1.8067E-02 &-- \\
					\hline  
					40 &4.6564E-03 & 1.9539 &4.6597E-03 & 1.9535 &4.6619E-03 &1.9543 \\
					\hline  
					80 &1.2129E-03 & 1.9408 &1.2136E-03 & 1.9409 &1.2138E-03 &1.9414 \\
					\hline  
					160 &3.1259E-04 & 1.9561 &3.1272E-04 & 1.9564 &3.1272E-04 &1.9565 \\
					\hline

					\multicolumn{7}{|c|}{$k=2$}\\		
					\hline  
					20 &1.5444E-03 & -- &1.5446E-03 & -- &1.5444E-03 &-- \\
					\hline  
					40 &2.8053E-04 & 2.4608 &2.8066E-04 & 2.4604 &2.8068E-04 &2.4600 \\
					\hline  
					80 &4.0109E-05 & 2.8061 &4.0124E-05 & 2.8063 &4.0126E-05 &2.8063 \\
					\hline  
					160 &5.4394E-06 & 2.8824 &5.4407E-06 & 2.8826 &5.4408E-06 &2.8827 \\
					\hline
					
					\multicolumn{7}{|c|}{$k=3$}\\
					
					\hline  
					20 &3.4563E-04 & -- &3.4578E-04 & -- &3.4595E-04 &-- \\
					\hline  
					40 &1.9241E-05 & 4.1670 &1.9255E-05 & 4.1665 &1.9257E-05 &4.1671 \\
					\hline  
					80 &1.2262E-06 & 3.9720 &1.2267E-06 & 3.9724 &1.2267E-06 &3.9725 \\
					\hline  
					160 &7.8895E-08 & 3.9581 &7.8905E-08 & 3.9585 &7.8906E-08 &3.9585 \\
					\hline
			\end{tabular}} 
			\label{tab2}
		\end{table}

		\begin{figure}[htb!]
			\centering
			\subfigure[Cell entropy inequality violation.]{
				\includegraphics[width=0.45\linewidth]{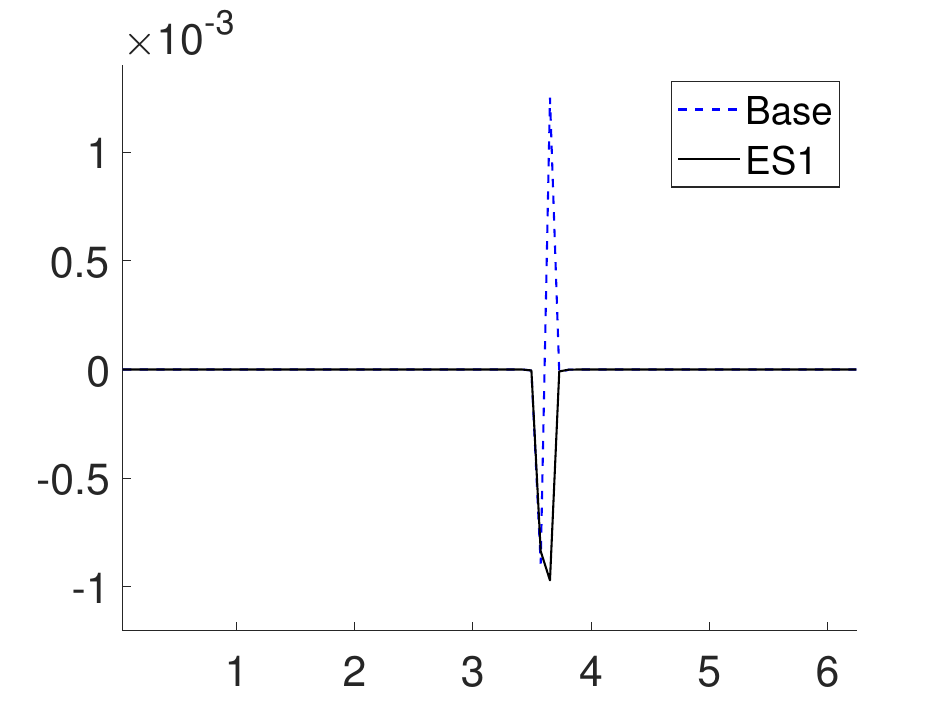} \label{figburgers1}}
			\subfigure[Evolution of total entropy with time.]{
				\includegraphics[width=0.45\linewidth]{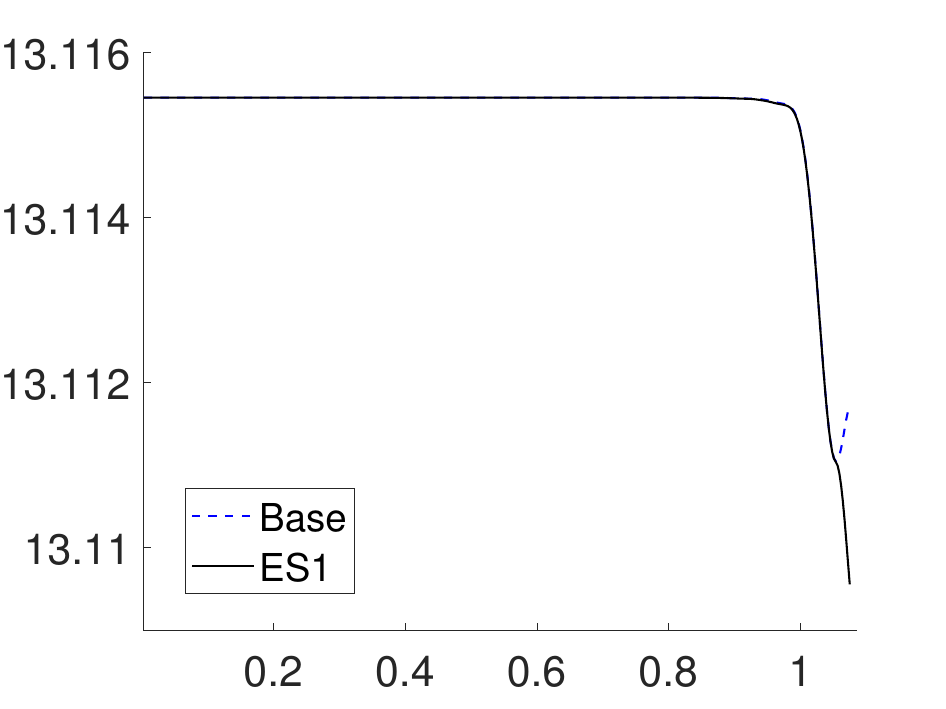} \label{figburgers2}}
			\caption{Example \ref{ex:burgers}: One-dimensional Burgers' equation. The cell entropy inequality violation and evolution of total entropy with time. $\mathbb P^2$ approximation are used with $N_x=80$.}
			\label{figburgers}
		\end{figure}

		\begin{Ex}
			[Buckley--Leverett equation]
			\label{ex:BL}
		The Buckley--Leverett equation is a widely studied nonconvex test problem in the literature, with the flux function given by
		$$
		f(u) =  \frac{4u^2}{4u^2+\left( 1-u \right) ^2}.
		$$
		This example is computationally challenging due to the non-convexity of $f$. 
		Without careful design, numerical schemes may fail to converge to the unique entropy solution or converge too slowly \cite{kurganov2007adaptive}. The computational domain is $\Omega=[-0.5,0.5]$ with Dirichlet boundaries. 
        We consider the classical Riemann initial data
		$$
		u\left( x,0 \right) =\begin{cases}
			u_L,\quad x\le 0,\\
			u_R,\quad x>0.\\
		\end{cases}
		$$
		Here, we test two kinds of initial conditions:
		\begin{equation*}
			\begin{aligned}
				\text{IC1:}\quad & u_L=-3, \, u_R=3, \\
				\text{IC2:}\quad & u_L=2, \, u_R=-2,
			\end{aligned}
		\end{equation*}
		and three types of entropy functions:
		$$\mathcal{U}^{(1)}=\frac{u^2}{2},\quad \mathcal{U}^{(2)}=\int^u{\arctan 20u\,\mathrm{d}u},\quad \mathcal{U}^{(3)}=\int^u{\arctan 20\left( u-1 \right) \mathrm{d}u}. 
		$$
		These initial conditions and entropy pairs have also been studied in \cite{chen2017entropy, liu2024entropy, liu2024non}. 
		
		For this problem, the BP limiter is employed. 
		We test these three different entropy pairs (denoted as U1, U2, U3) under the two initial conditions, using $N_x=80$ cells up to $T=1$. The reference solution, computed via a fifth-order WENO scheme on a mesh of $10000$ cells, is shown alongside the numerical results in Fig. \ref{figBL}. 
		Our findings indicate that satisfying a single entropy condition is insufficient to guarantee physically correct solutions for both initial conditions. Specifically, only the U2 scheme yields the correct solution for IC1, while only U3 does so for IC2, consistent with prior studies \cite{chen2017entropy, liu2024entropy, liu2024non}.
		Furthermore, we implement a scheme that simultaneously enforces the entropy inequalities of both U2 and U3 (labeled ``U2+U3"). This combined approach successfully captures the correct physical solution for both IC1 and IC2. Crucially, the ``U2+U3" scheme achieves this without introducing excessive numerical dissipation compared to the single-entropy schemes. This example demonstrates the key advantage of our proposed method.
		\end{Ex}

		\begin{figure}[htb!]
			\centering
			\subfigure[IC1.]{
				\includegraphics[width=0.45\linewidth]{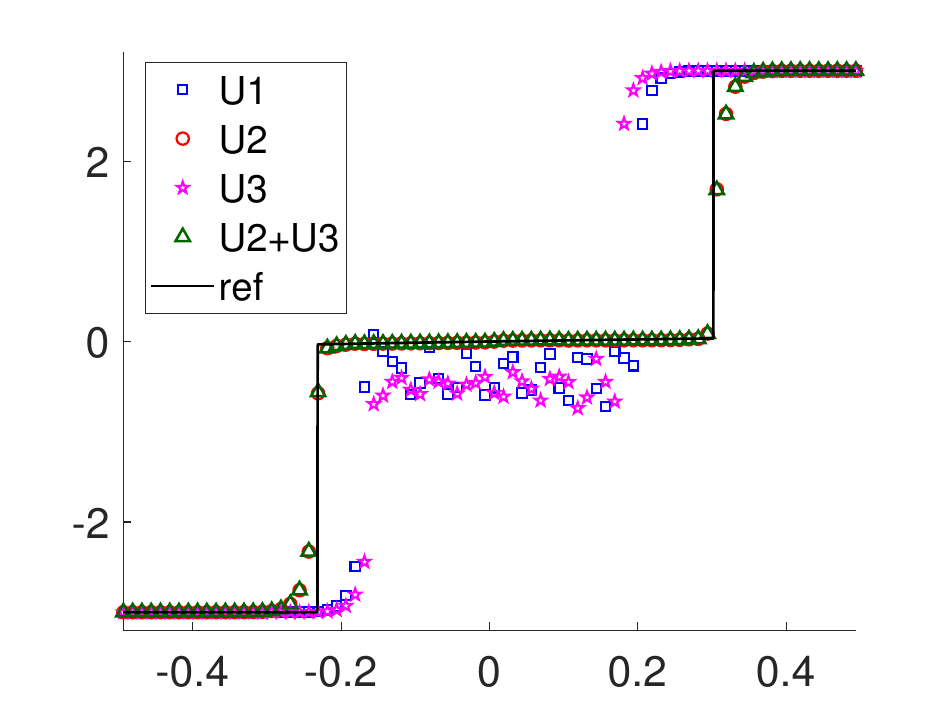}}
			\subfigure[IC2.]{
				\includegraphics[width=0.45\linewidth]{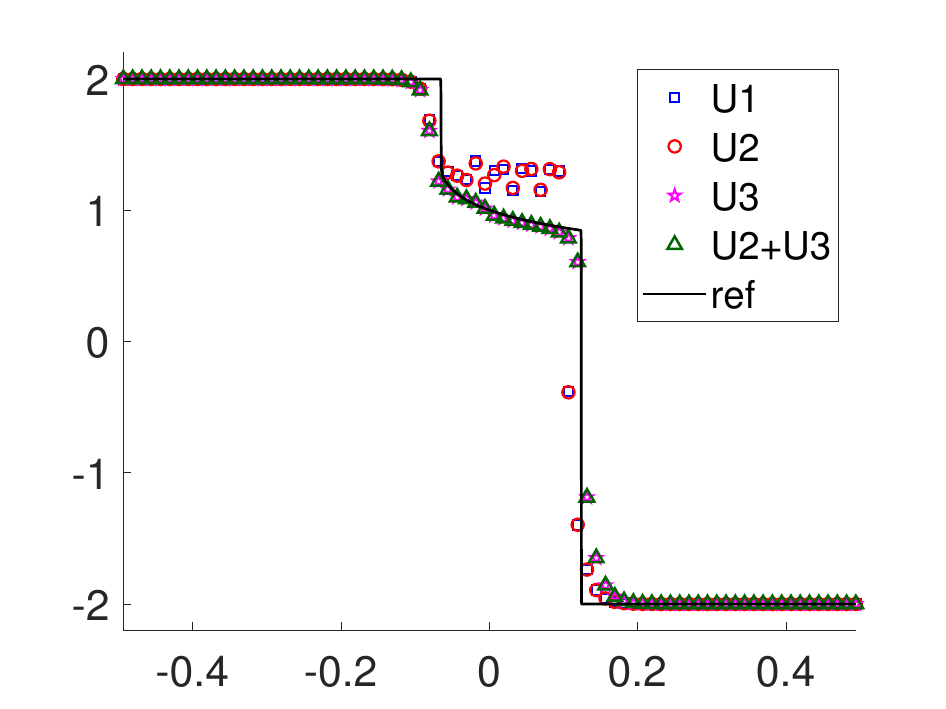}}
			\caption{Example \ref{ex:BL}: Buckley--Leverett equation. Numerical solution of different entropy functions at $T = 1$ with $N_x=80$.}
			\label{figBL}
		\end{figure}

	\subsection{One-dimensional Euler systems}

	\begin{Ex}
		[Accuracy test]
		\label{ex:acc_sys}
	Here, we consider the smooth exact solution
	$$\rho(x,t)= 1+0.2\sin (x-t),\quad u(x,t)=1,\quad p(x,t)=1.$$
	The computational domain is taken as $\Omega=[0,2\pi]$ with periodic boundaries. In Table \ref{tab3}, we present the $L^2$ error of the density at $T=1$ for $k=1,2,3$. The optimal convergence rates are obtained. 
	\end{Ex}
	
	\begin{table}[htb!]
		\centering
		\caption{Example \ref{ex:acc_sys}: Accuracy test for one-dimensional Euler equation. The $L^2$ errors and orders of density with different schemes at $T=1$. }
		\setlength{\tabcolsep}{3.2mm}{
			\begin{tabular}{|c|cc|cc|}
				\hline $N_x$ & Base & Order & ES & Order  \\ 
				\hline
				\multicolumn{5}{|c|}{$k=1$}\\
				
				\hline  
				20 &5.6346E-04 & -- &5.6346E-04 & --  \\
				\hline  
				40 &1.3978E-04 & 2.0111 &1.3978E-04 & 2.0111  \\
				\hline  
				80 &3.4878E-05 & 2.0028 &3.4878E-05 & 2.0028  \\
				\hline  
				160 &8.7152E-06 & 2.0007 &8.7152E-06 & 2.0007  \\
				\hline

				\multicolumn{5}{|c|}{$k=2$}\\		
				\hline  
				20 &4.8440E-05 & -- &4.8443E-05 & --  \\
				\hline  
				40 &6.7413E-06 & 2.8451 &6.7481E-06 & 2.8437  \\
				\hline  
				80 &8.6966E-07 & 2.9545 &8.7024E-07 & 2.9550  \\
				\hline  
				160 &1.0961E-07 & 2.9881 &1.0964E-07 & 2.9887  \\
				\hline
				
				\multicolumn{5}{|c|}{$k=3$}\\
				
				\hline  
				20 &2.9257E-07 & -- &2.9356E-07 & --  \\
				\hline  
				40 &1.8381E-08 & 3.9925 &1.8381E-08 & 3.9974  \\
				\hline  
				80 &1.1239E-09 & 4.0316 &1.1548E-09 & 3.9925  \\
				\hline  
				160 &7.0488E-11 & 3.9950 &7.0786E-11 & 4.0281  \\
				\hline
		\end{tabular}} 
		\label{tab3}
	\end{table}

	\begin{Ex}
		[Sod shock tube]
		\label{ex:Sod}
	We consider the Sod shock tube problem originally presented by Sod \cite{sod1978survey}, which has become a standard benchmark test for compressible flow solvers. This test example involves a Riemann problem with an initial discontinuity located at $x=0$ in the computational domain $\Omega= [-1,1]$. The initial conditions are given by 
	$$
	\left( \rho ,u,p \right) =\begin{cases}
		\left( 1,0,1 \right) , & x<0,\\
		\left( 0.125,0,0.1 \right) , & x\ge 0.\\
	\end{cases}
	$$
	The solution consists of a left-propagating rarefaction wave, a contact discontinuity, and a right-propagating shock wave. This test is particularly useful for assessing the scheme's ability to accurately capture different wave structures and maintain sharp discontinuity resolution. 
	
	In Fig. \ref{figSod}, we present the result of density at $T=0.4$ on $N_x=200$ meshes. Similar to the phenomenon in \cite{chen2017entropy}, the structure of discontinuities are captured very well via the entropy stability. Moreover, we show the cell entropy inequality violation at the final time step, it can be seen that our scheme strictly satisfies the cell entropy inequality, while the Base scheme violate it in some regions near the shock.
	\end{Ex}

	\begin{Ex}
		[Shu--Osher problem]
		\label{ex:Shuosher}
	We consider the Shu--Osher problem originally presented by Shu and Osher \cite{shu1988efficient}, which serves as a stringent test for assessing the numerical dissipation and resolution properties of high-order schemes. This test example involves a Mach 3 shock wave interacting with sine-wave density perturbations in the computational domain $\Omega=[-5,5]$.  The initial conditions are given by 
	$$
	\left( \rho ,u,p \right) =\begin{cases}
		\left( 3.857143,2.629369,10.3333 \right) , \quad x<-4,\\
		\left( 1+0.2\sin \left( 5x \right) ,0,1 \right) , \qquad x\ge -4.\\
	\end{cases}
	$$
	As the shock propagates through the density fluctuations, it generates complex wave structures with multiple scales. This test is particularly useful for evaluating the scheme's capability to resolve fine-scale features while maintaining low numerical dissipation in smooth regions. 
	
	In Fig \ref{figShuosher}, we present the result of density at $T=1.8$ on $N_x=200$ meshes. The reference solution is computed by the fifth-order finite difference WENO scheme on $2000$ meshes. It can be seen that our scheme gives a very satisfactory result. Likewise, we plot the cell entropy inequality violation at the final time step. It appears that the Base scheme also maintains the cell entropy inequality in the discontinuous regions. However, the maximum cell entropy inequality violation of the Base scheme is $5.2746\times 10^{-7}$, while the maximum cell entropy inequality violation of our scheme is 0. 
	\end{Ex}

	\begin{figure}[htb!]
		\centering
		\subfigure[Density.]{
			\includegraphics[width=0.45\linewidth]{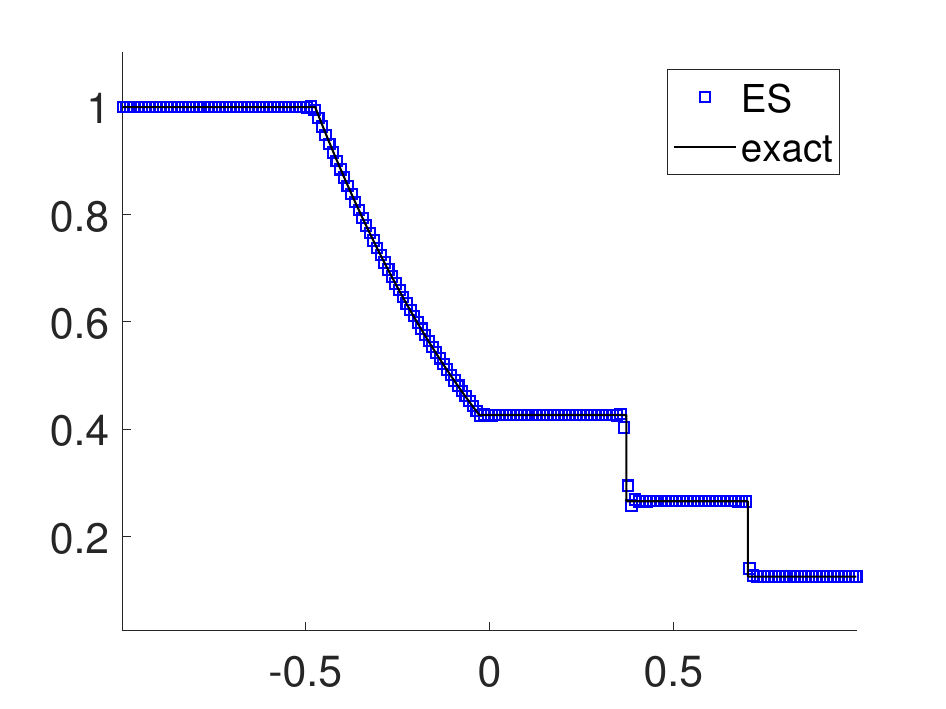}}
		\subfigure[Cell entropy inequality violation.]{
			\includegraphics[width=0.45\linewidth]{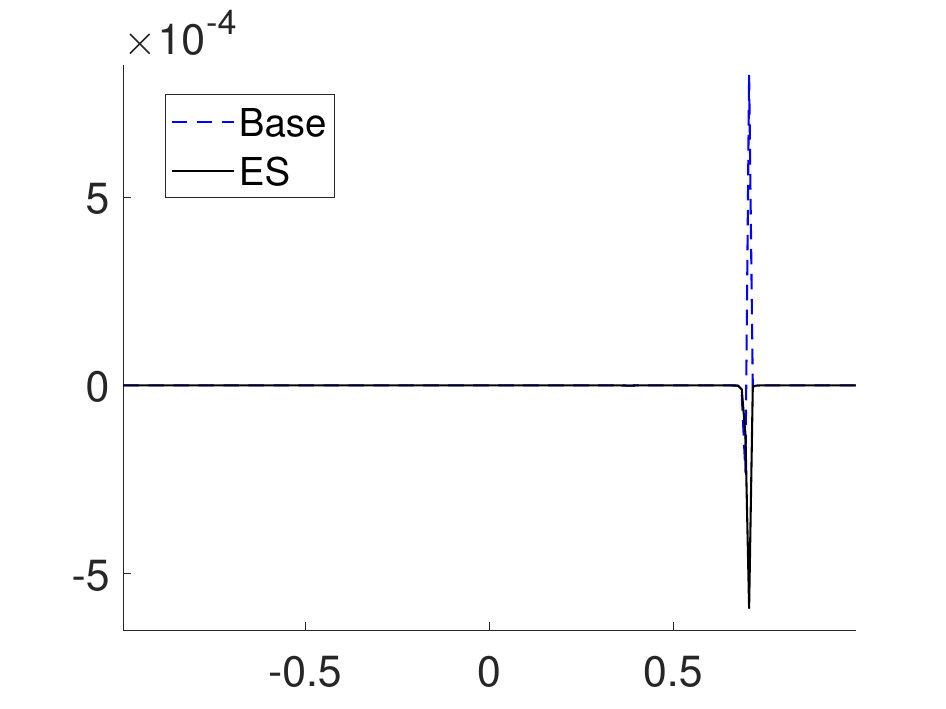}}
		\caption{Example \ref{ex:Sod}: Sod shock tube. The result at $T=0.4$ with $N_x=200$.}
		\label{figSod}
	\end{figure}
	
	\begin{figure}[htb!]
		\centering
		\subfigure[Density.]{
			\includegraphics[width=0.45\linewidth]{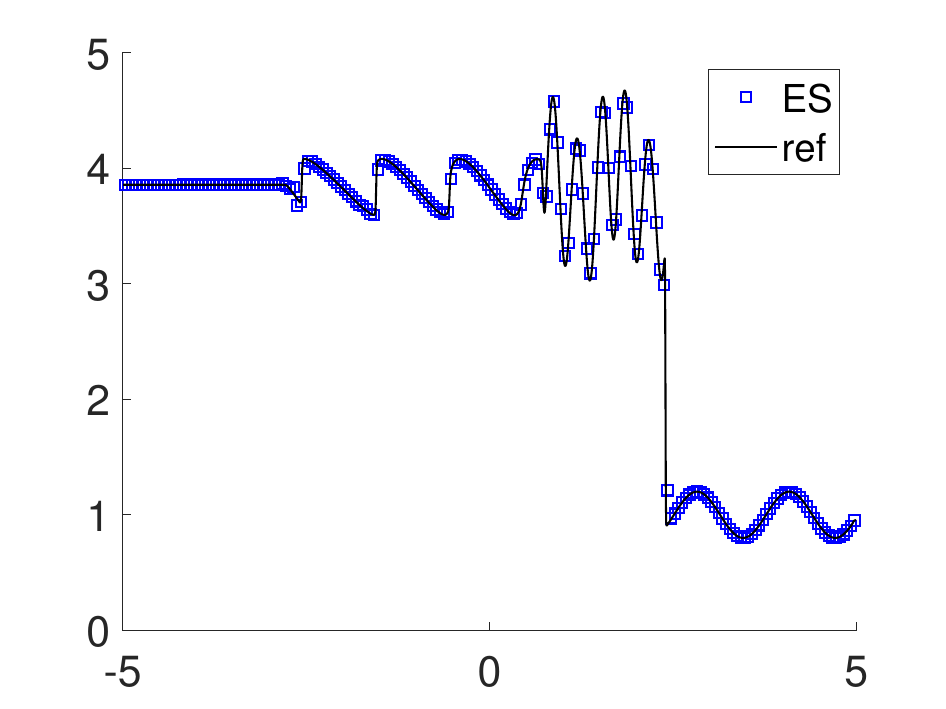}}
		\subfigure[Cell entropy inequality violation.]{
			\includegraphics[width=0.45\linewidth]{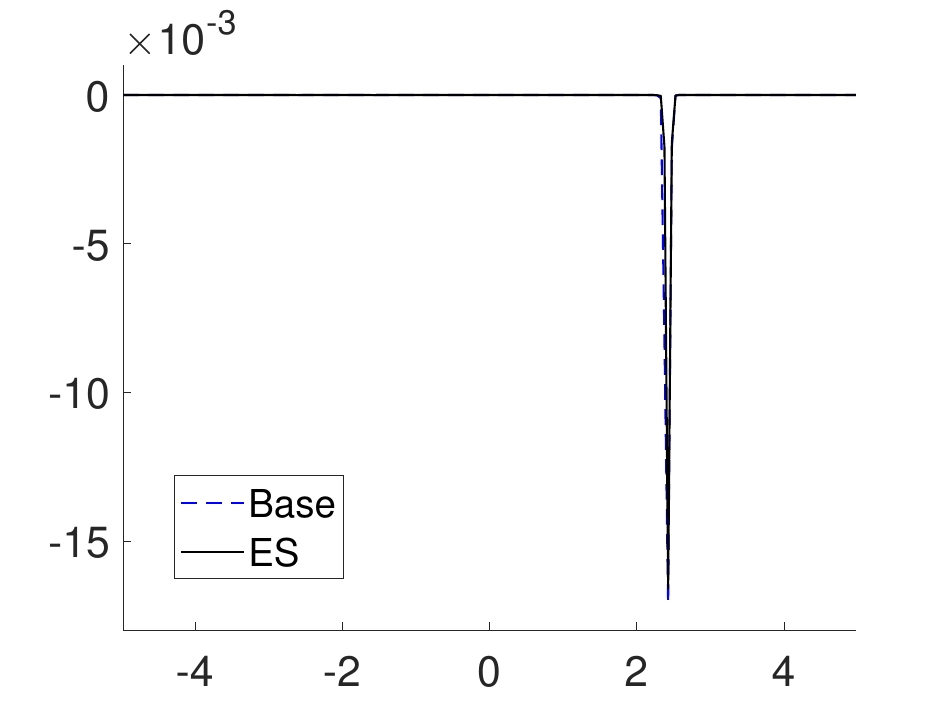}}
		\caption{Example \ref{ex:Shuosher}: Shu--Osher problem. The result at $T=1.8$ with $N_x=200$.}
		\label{figShuosher}
	\end{figure}

	\begin{Ex}
		[Two blast waves]
		\label{ex:blast}
	We consider the blast wave interaction problem originally presented by Woodward and Colella \cite{woodward1984numerical}. This test example involves two strong blast waves propagating towards each other in a confined domain $\Omega=[0,1]$ with reflective boundary conditions. The initial conditions are given by $$
	\rho =1,\quad u=0,\quad p=\begin{cases}
		1000, & x<0.1,\\
		0.01,& 0.1\le x\le 0.9,\\
		100,& x>0.9.\\
	\end{cases}
	$$
	The solution features complex wave interactions including multiple shock reflections and contact discontinuities. This test is particularly demanding as it requires accurate resolution of strong shocks and their intricate interaction patterns while maintaining numerical stability. 
	
	For this example, the PP limiter should be employed, otherwise the negative pressure will be introduced. In Fig. \ref{figblast}, we give the result at $T=0.038$ on $N_x=400$ meshes, where the reference solution is obtained with the fifth-order WENO scheme with 2000 cells.  Compared to the result in \cite{peng2024oedg, liu2024entropy, liu2024non, wei2024jump}, it can be seen that our scheme gives a really sharp solution, although some minor shocks are present. Moreover, we want to remark here that this problem was not simulated in \cite{chen2017entropy}. In our reproduction attempts, we found that using only the PP limiter is insufficient for stable computation with their scheme, unless a shock limiter is also employed.
	\end{Ex}

	\begin{figure}[htb!]
		\centering
		\subfigure[Density.]{
			\includegraphics[width=0.45\linewidth]{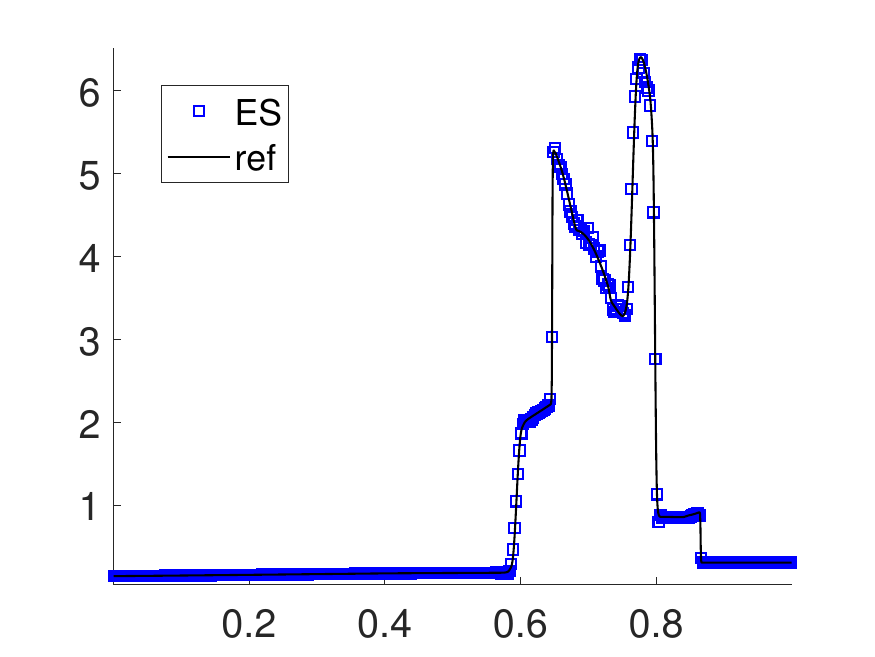}}
		\subfigure[Cell entropy inequality violation.]{
			\includegraphics[width=0.45\linewidth]{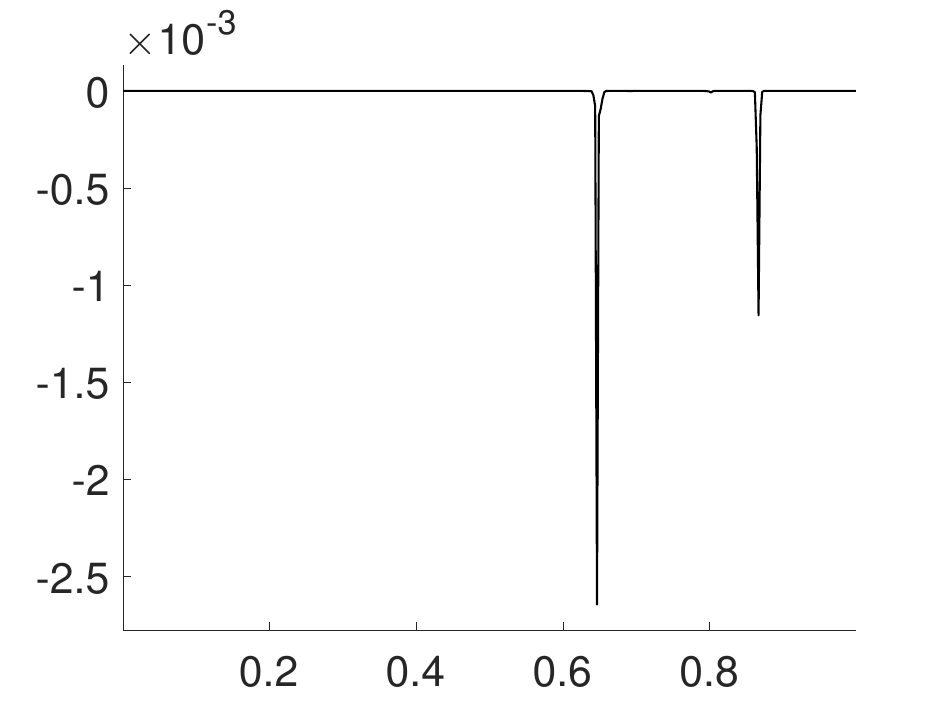}}
		\caption{Example \ref{ex:blast}: Two blast waves. The result at $T=0.038$ with $N_x=400$.}
		\label{figblast}
	\end{figure}
	
	\begin{Ex}
		[Leblanc shock tube]
		\label{ex:Leblanc}
	We consider the Leblanc shock tube problem \cite{loubere2005subcell}, which represents an extremely challenging test case due to the large density and pressure ratios involved. This test example involves a extremely strong shock wave with an initial discontinuity located at $x=0$ in the computational domain $\Omega=[-10,10]$. The initial conditions are given by 
	$$
	\left( \rho ,u,p \right) =\begin{cases}
		\left( 2,0,10^9 \right) , & x<0,\\
		\left( 0.001,0,1 \right) , & x\ge 0.\\
	\end{cases}
	$$ 
	The large density ratio of $10^3$ and pressure ratio of $10^9$ make this problem particularly demanding for numerical schemes, as many methods struggle to maintain stability and accuracy under such severe conditions. This test serves as a stringent assessment of the scheme's robustness in handling extreme flow parameters while preserving positivity and avoiding spurious oscillations. 
	
	For this example, the PP limiter is employed. In Fig. \ref{figLeb1}, we present the result at $T=0.0001$ on meshes with $N_x=800$ and $N_x=6400$ cells, respectively,
	with corresponding zoomed in views provided in Fig. \ref{figLeb2}. It can be seen that our scheme can compute this problem stably and accurately. Moreover, as the mesh is refined, the numerical solution indeed converges to the exact solution. It is noted that, as shown in \cite{zhang2010positivity}, employing only the PP limiter fails to complete the simulation due to the presence of strong shocks.
	This demonstrates that the ES property enhances the stability of the scheme. Furthermore, in our tests, the semi-discrete ES  
	scheme by Chen and Shu \cite{chen2017entropy} also fails to simulate this problem, even the PP limiter is employed. 
	This shows the importance and advantage of achieving the ES property in the fully-discrete sense.
	\end{Ex}

	\begin{figure}[htbp!]
		\centering
		\subfigure[$\lg\rho$.]{
			\includegraphics[width=0.45\linewidth]{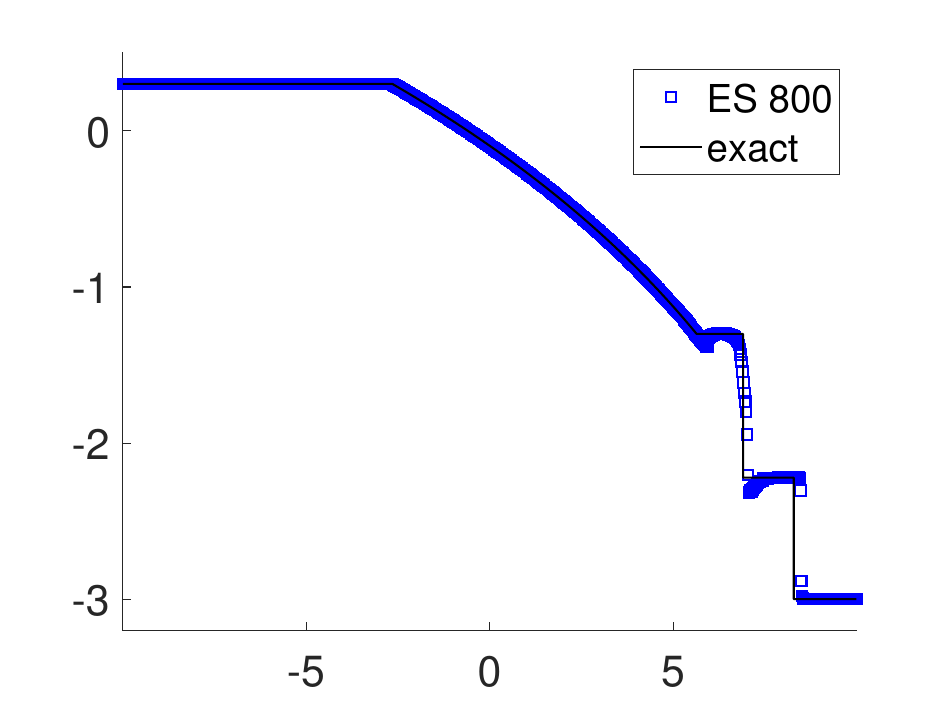}}
		\subfigure[$\lg\rho$.]{
			\includegraphics[width=0.45\linewidth]{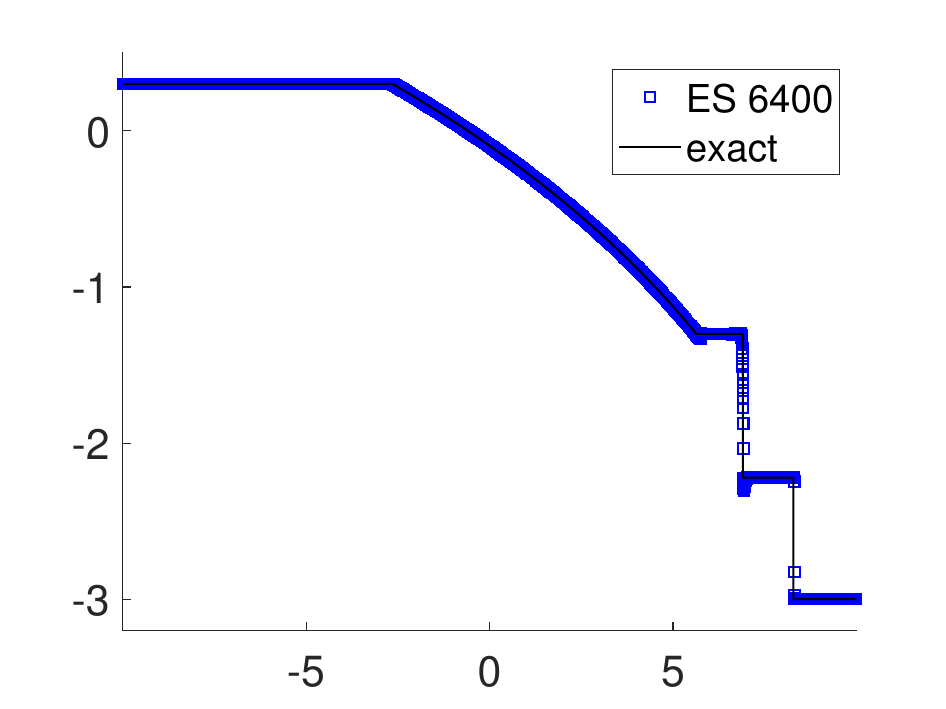}}
		\subfigure[Velocity.]{
			\includegraphics[width=0.45\linewidth]{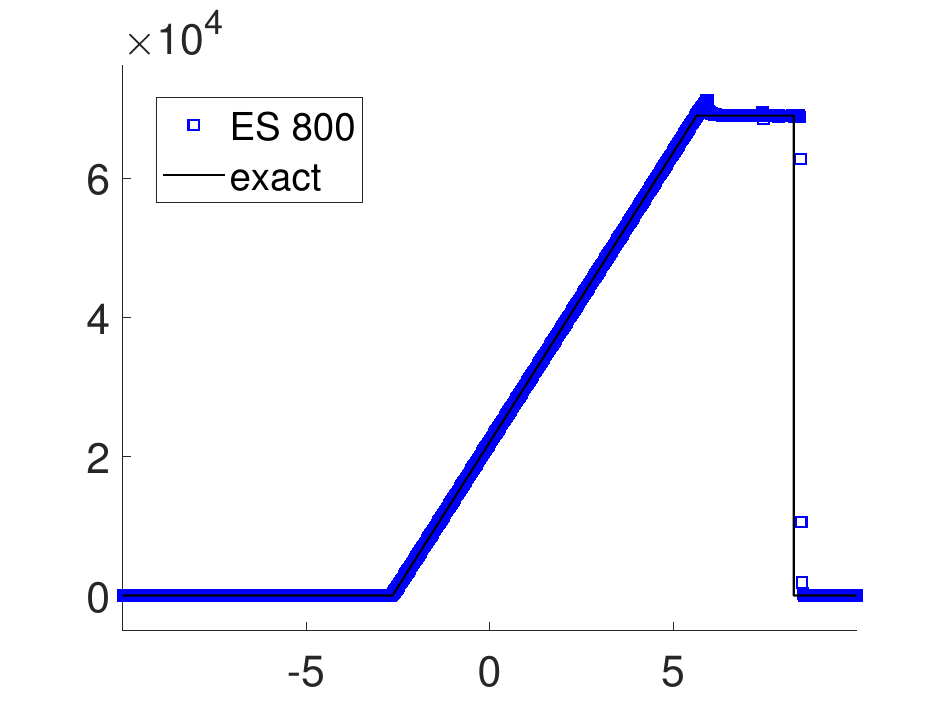}}
		\subfigure[Velocity.]{
			\includegraphics[width=0.45\linewidth]{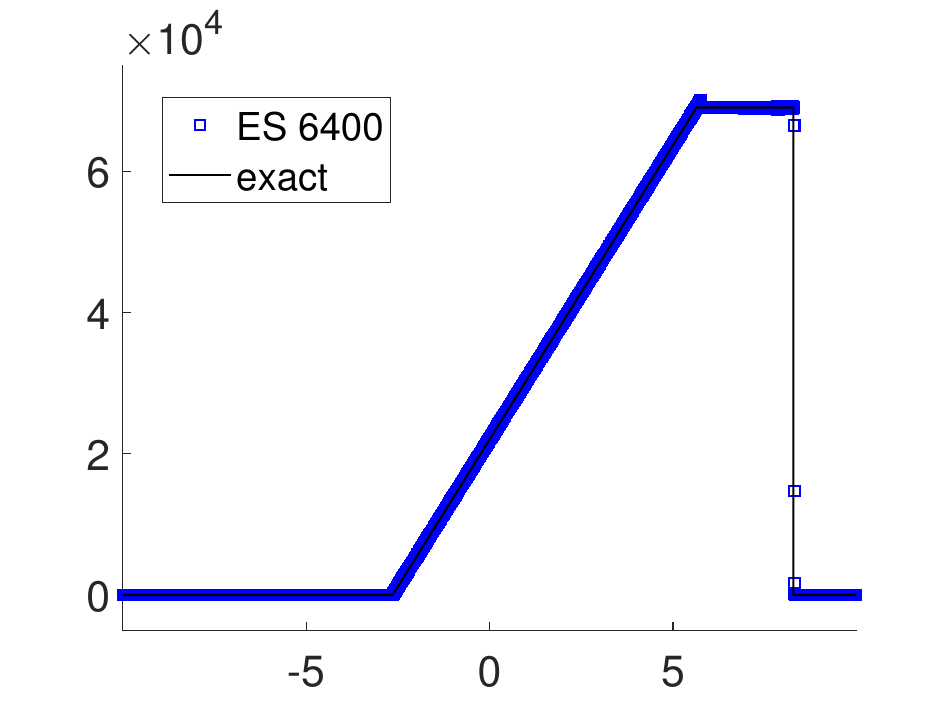}}
		\subfigure[$\lg p$.]{
			\includegraphics[width=0.45\linewidth]{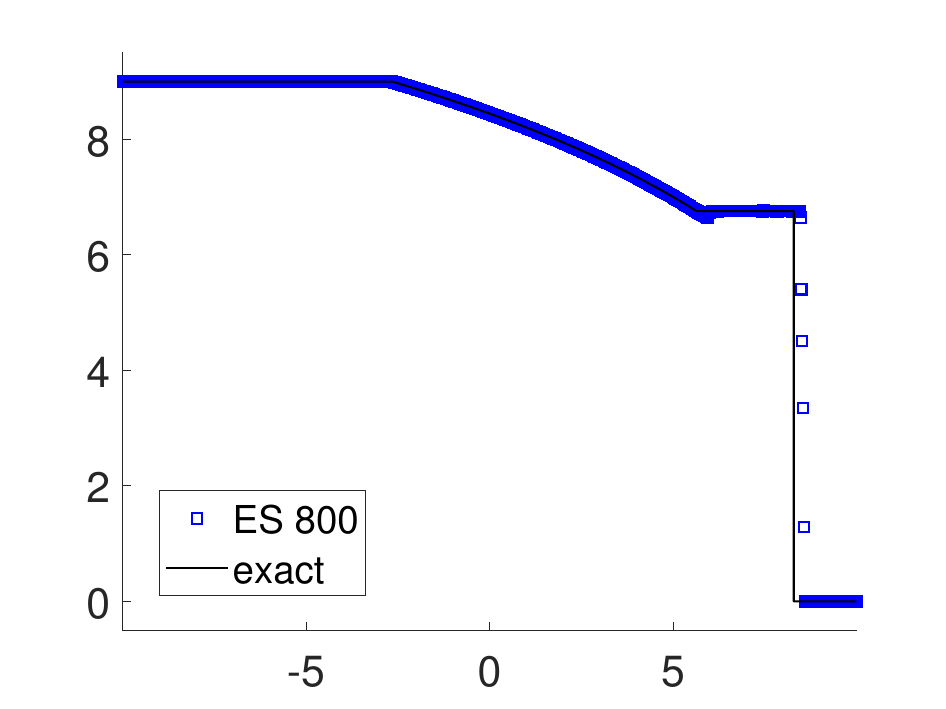}}
		\subfigure[$\lg p$.]{
			\includegraphics[width=0.45\linewidth]{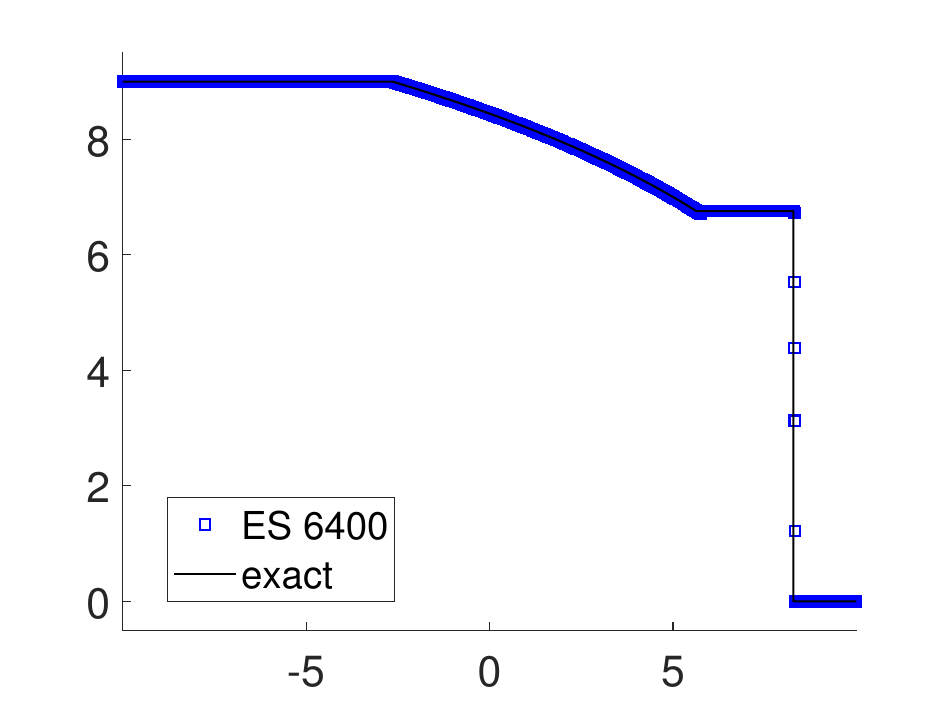}}
		\caption{Example \ref{ex:Leblanc}: Leblanc shock tube. The numerical solution at $T = 0.0001$ with $N_x= 800$ and $N_x=6400$.}
		\label{figLeb1}
	\end{figure}
	
	\begin{figure}[htbp!]
		\centering
		\subfigure[$\lg\rho$.]{
			\includegraphics[width=0.45\linewidth]{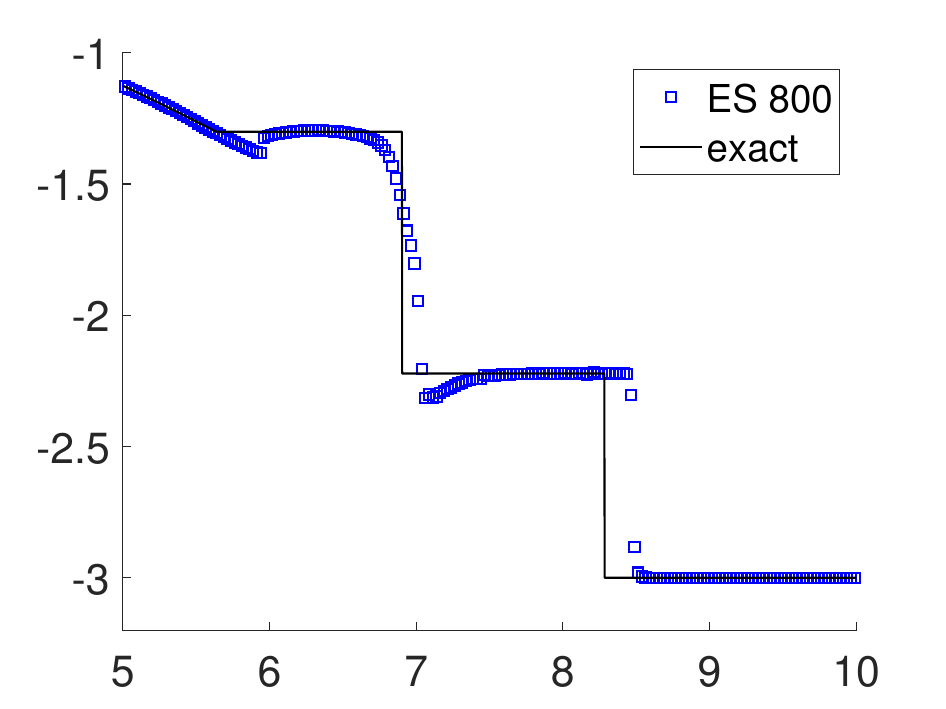}}
		\subfigure[$\lg\rho$.]{
			\includegraphics[width=0.45\linewidth]{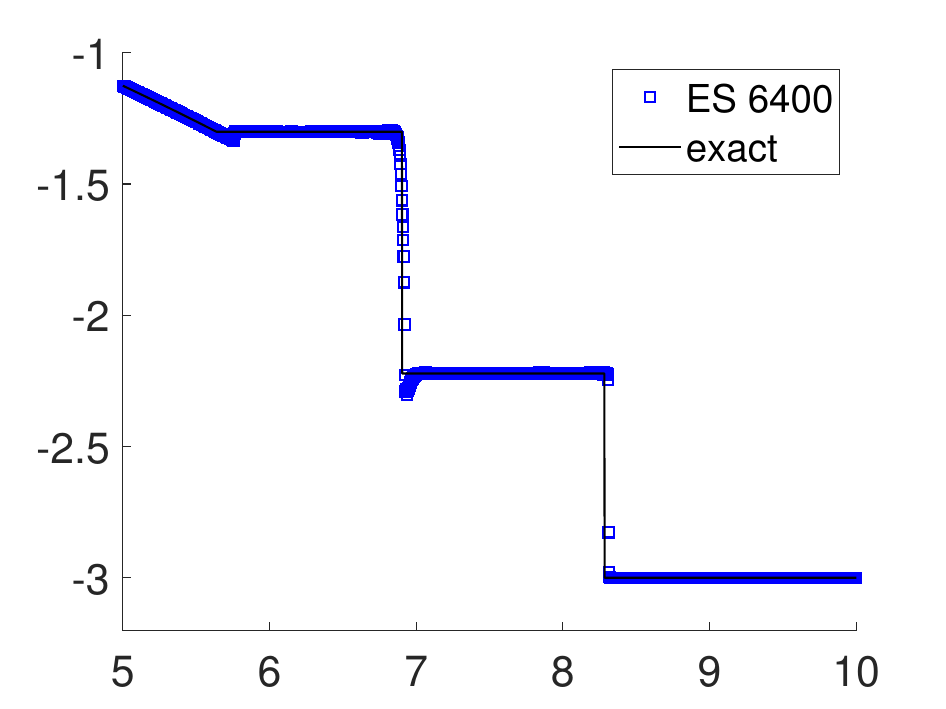}}
		\subfigure[Velocity.]{
			\includegraphics[width=0.45\linewidth]{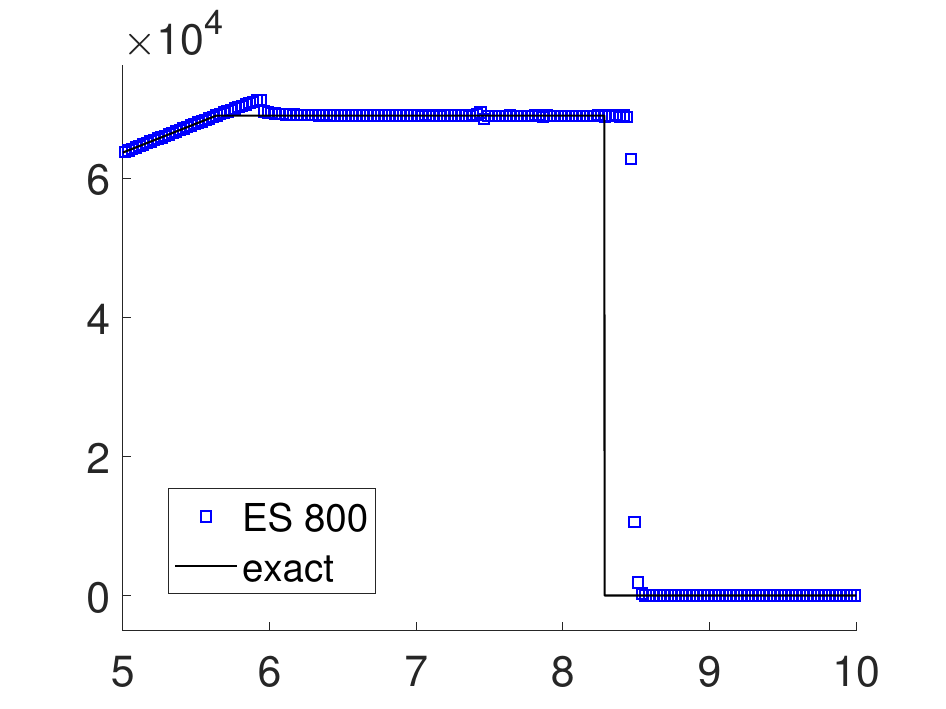}}
		\subfigure[Velocity.]{
			\includegraphics[width=0.45\linewidth]{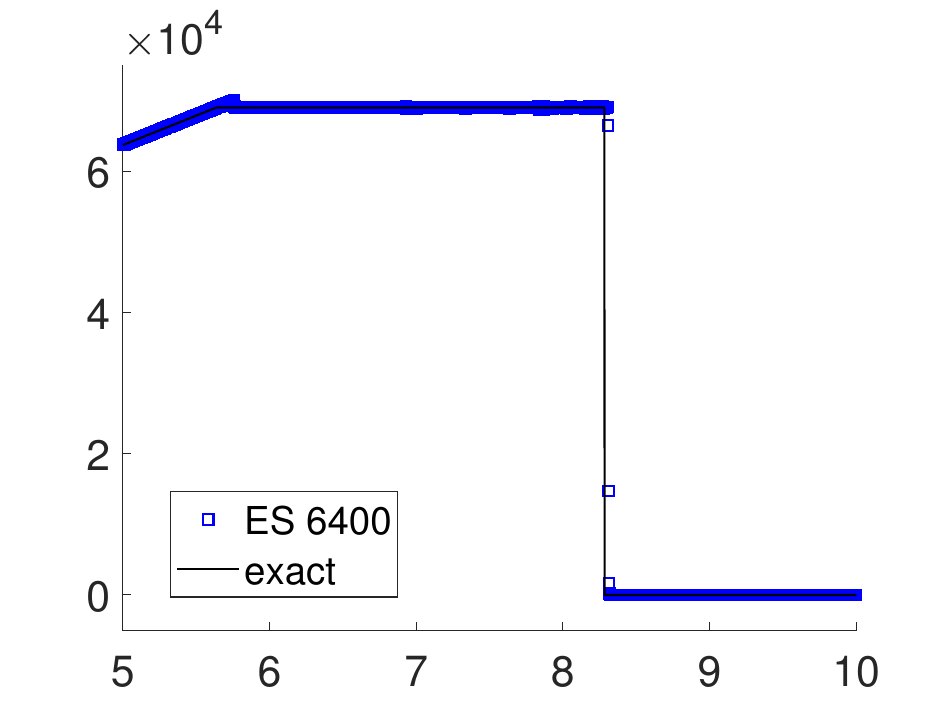}}
		\subfigure[$\lg p$.]{
			\includegraphics[width=0.45\linewidth]{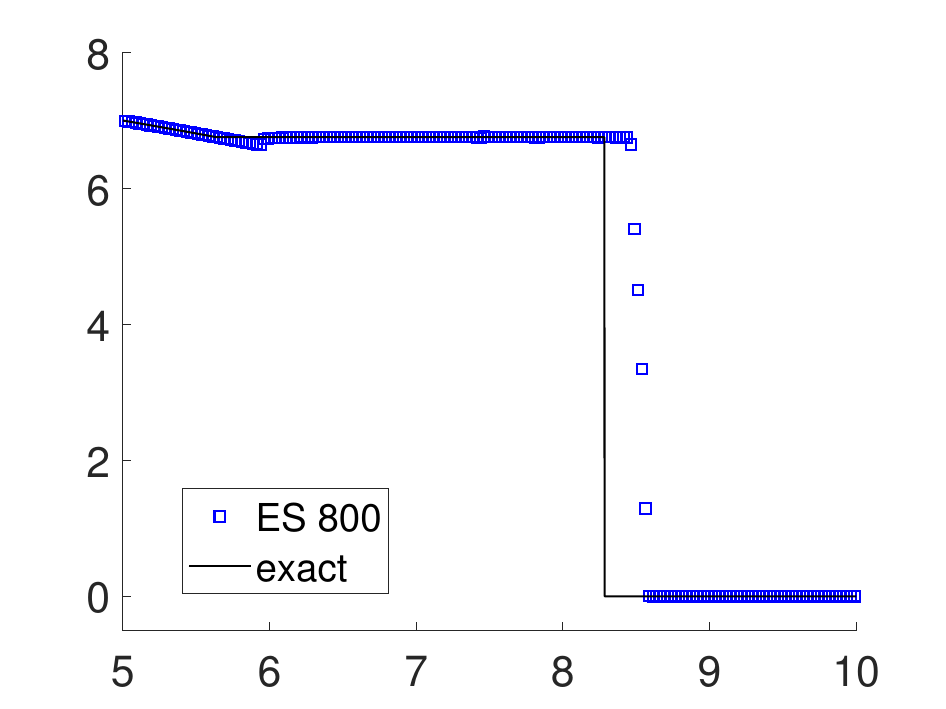}}
		\subfigure[$\lg p$.]{
			\includegraphics[width=0.45\linewidth]{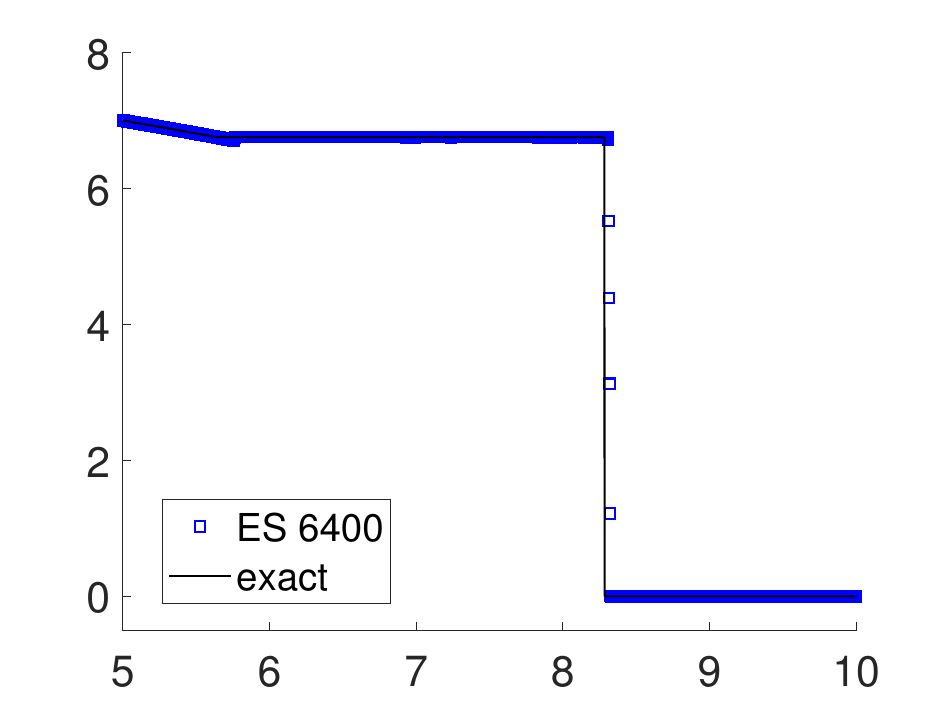}}
		\caption{Example \ref{ex:Leblanc}: Leblanc shock tube. The zoomed in results at $T = 0.0001$ with $N_x=800$ and $N_x= 6400$. }
		\label{figLeb2}
	\end{figure}

	\subsection{Two-dimensional scalar equations}
	
	\begin{Ex}
		[Two-dimensional Burgers' equation]
		\label{ex:burgers_2D}
	Here, we consider the two-dimensional Burgers' equation
	\begin{equation}
		u_t+\left(\frac{u^2}{2}\right)_x+\left(\frac{u^2}{2}\right)_y=0.
	\end{equation}
	The equation is defined on the computational domain $\Omega=[0,2\pi]\times[0,2\pi]$ with periodic boundaries and initial data $u(x,y,0)=\sin(x+y)$. The entropy pair is chosen by
	$$(\mathcal U,\mathcal F_1,\mathcal F_2)=(e^u,(u-1)e^u,(u-1)e^u).$$
	We simulate the equation until the final time $T=0.3$. The errors and orders for $k=1,2,3$ are presented in Table \ref{tab4}. It can be seen that our scheme also maintains optimal convergence rate for the two-dimensional case.
	\end{Ex}

	\begin{table}[htb!]
		\centering
		\caption{Example \ref{ex:burgers_2D}: Two-dimensional Burgers' equation. The $L^2$ errors and orders of different schemes at $T=0.3$. }
		\setlength{\tabcolsep}{3.2mm}{
			\begin{tabular}{|c|cc|cc|}
				\hline $N_x\times N_y$ & Base & Order & ES & Order  \\ 
				\hline
				\multicolumn{5}{|c|}{$k=1$}\\
				
				\hline  
				$20\times 20$ &1.9655E-02 & -- &1.9805E-02 & --  \\
				\hline  
				$40\times 40$ &4.7351E-03 & 2.0534 &4.8285E-03 & 2.0362  \\
				\hline  
				$80\times 80$ &1.0715E-03 & 2.1437 &1.0870E-03 & 2.1512  \\
				\hline  
				$160\times 160$ &2.5297E-04 & 2.0827 &2.5446E-04 & 2.0948  \\
				\hline

				\multicolumn{5}{|c|}{$k=2$}\\		
				\hline  
				$20\times 20$ &4.4015E-03 & -- &4.3996E-03 & --  \\
				\hline  
				$40\times 40$ &6.5681E-04 & 2.7444 &6.5702E-04 & 2.7433  \\
				\hline  
				$80\times 80$ &9.9602E-05 & 2.7212 &9.9669E-05 & 2.7207  \\
				\hline  
				$160\times 160$ &1.4385E-05 & 2.7916 &1.4391E-05 & 2.7920  \\
				\hline
				
				\multicolumn{5}{|c|}{$k=3$}\\
				
				\hline  
				$20\times 20$ &6.4805E-04 &-- &6.4865E-04 & --  \\
				\hline  
				$40\times 40$ &8.2437E-05 & 2.9747 &8.2458E-05 & 2.9757  \\
				\hline  
				$80\times 80$ &5.6425E-06 & 3.8689 &5.6432E-06 & 3.8691  \\
				\hline  
				$160\times 160$ &3.4623E-07 & 4.0265 &3.4625E-07 & 4.0266  \\
				\hline
		\end{tabular}} 
		\label{tab4}
	\end{table}

	\begin{Ex}
		[Two-dimensional Buckley--Leverett equation]
		\label{ex:BL_2D}
	We consider the two-dimensional Buckley--Leverett equation. The nonconvex flux functions are given by
	\begin{equation}
		f(u)=\frac{u^2}{u^2+(1-u)^2},\quad g(u)=\frac{u^2(1-5(1-u)^2)}{u^2+(1-u)^2}.
	\end{equation}
	The computational domain is $\Omega=[-1.5,1.5]\times [-1.5,1.5]$ with periodic boundaries. The initial data is
	$$
	u(x,y,0) =\begin{cases}
		1 , & x^2+y^2<0.5,\\
		0 , & \mathrm{otherwise}.\\
	\end{cases}
	$$ 
	We consider two kinds of entropy functions:
	$$\mathcal U^{(1)}=\frac{u^2}{2},\quad \mathcal U^{(2)}=\int^u \arctan(20(u-0.5))\mathrm du.$$
	In this problem, the BP limiter is employed. The numerical solutions with different entropy pairs at $T=0.5$ on $N_x\times N_y=200\times 200$ meshes are presented in Fig. \ref{figBL2D}. It can be observed that with the square entropy $\mathcal U^{(1)}$, minor nonphysical structures appear near the points $(0.5,-1), \ (1,0)$ and $(1,1)$. In contrast, the scheme using $\mathcal U^{(2)}$ gives a more satisfactory result. Moreover, the result of U1+U2 is similar to U2, and is not presented here. 
	This comparison underlines the critical role of entropy choice in achieving superior numerical accuracy.
	\end{Ex}

	\begin{figure}[htb!]
		\centering
		\subfigure[U1.]{
			\includegraphics[width=0.45\linewidth]{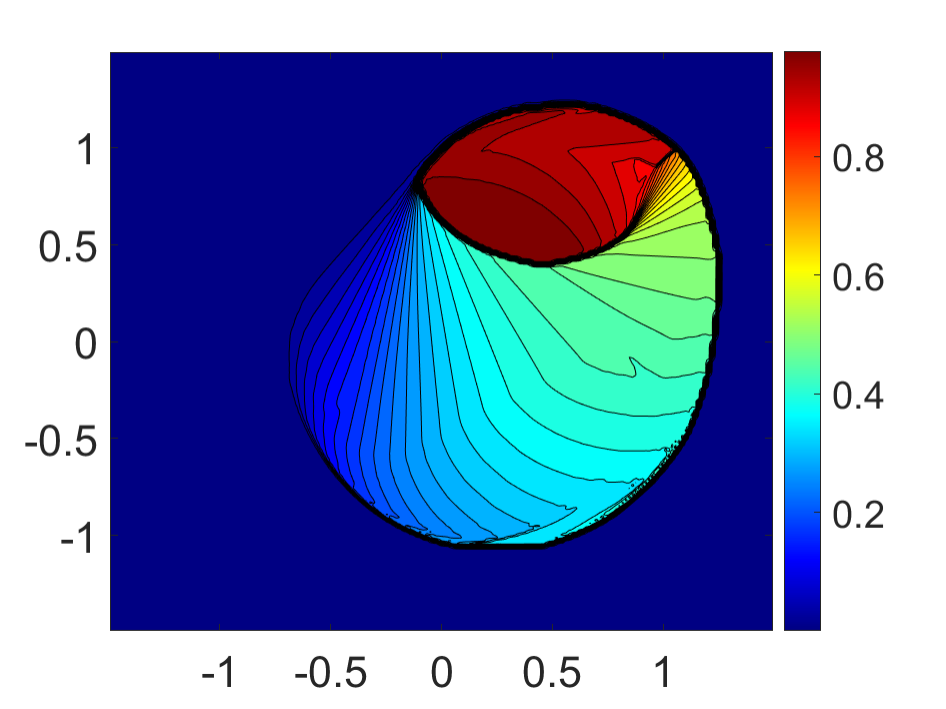}}
		\subfigure[U2.]{
			\includegraphics[width=0.45\linewidth]{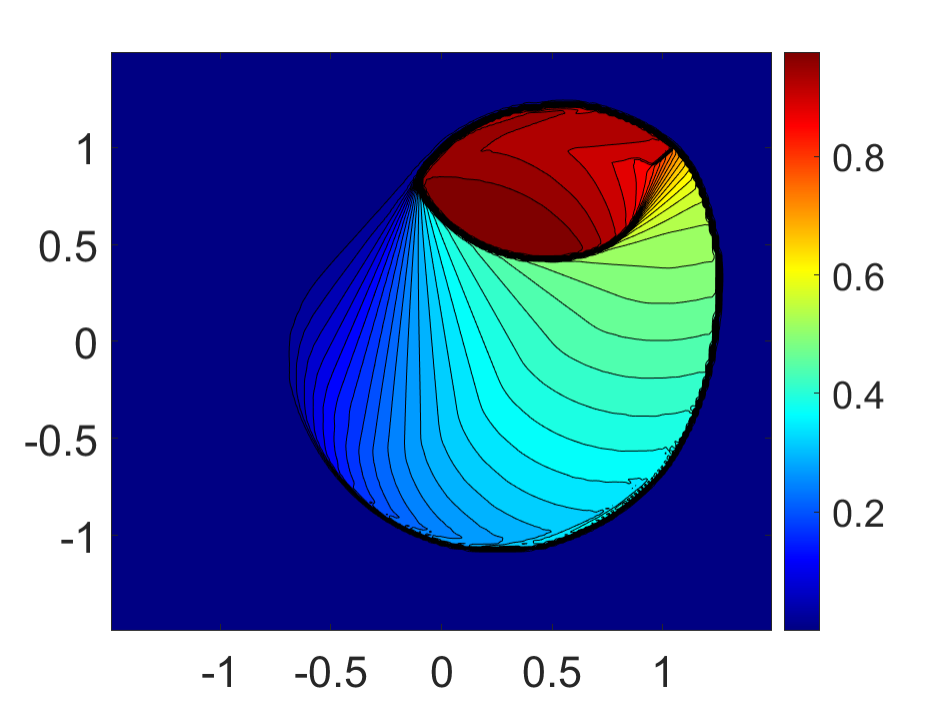}}
		\caption{Example \ref{ex:BL_2D}: Two-dimensional Buckley--Leverett equation. The numerical solution at $T=0.5$ with $N_x\times N_y=200\times 200$. 40 contour lines are used.}
		\label{figBL2D}
	\end{figure}
	
	\subsection{Two-dimensional Euler systems}
	
	\begin{Ex}
		[Isentropic vortex]
		\label{ex:acc_sys2D}
	We consider the two-dimensional smooth isentropic vortex problem to test the accuracy of the scheme for the two-dimensional Euler equations. Initially, an isentropic vortex perturbation centered at $(5,0)$ is added on the mean flow $(\rho_0,u_0,v_0,p_0)=(1,1,0,1)$:
	$$
	\begin{aligned}
		\rho &=\left( 1-\frac{\gamma -1}{16\gamma \pi ^2}\beta ^2e^{2\left( 1-r^2 \right)} \right) ^{1/\left( \gamma -1 \right)},
		\\
		u&=1-\frac{\beta}{2\pi}ye^{1-r^2},\\ v&=\frac{\beta}{2\pi}\left( x-5 \right) e^{1-r^2},
		\\ p&=\rho ^{\gamma},\end{aligned}
	$$
	where $r^2 = (x - 5)^2 + y^2$ and we take the vortex strength $\beta = 5$. It is clear that the exact solution is the passive convection of the vortex with the mean velocity. In numerical simulation, the computational domain $\Omega = [0,10]\times[-5,5]$ is extended periodically in both directions. We simulate this problem until $T=10$, when the exact solution agrees with the initial condition. In Table \ref{tab5}, we present the $L^2$ errors and orders of density for $k=1,2,3$. The optimal convergence rates are obtained, and the performance of ES scheme is comparable to the Base scheme. 
	\end{Ex}

	\begin{table}[htb!]
		\centering
		\caption{Example \ref{ex:acc_sys2D}: Isentropic vortex. The $L^2$ errors and orders of density with different schemes at $T=10$. }
		\setlength{\tabcolsep}{3.2mm}{
			\begin{tabular}{|c|cc|cc|}
				\hline $N_x\times N_y$ & Base & Order & ES & Order  \\ 
				\hline
				\multicolumn{5}{|c|}{$k=1$}\\
				
				\hline  
				$20\times 20$ &3.3344E-02 & -- &3.3422E-02 & --  \\
				\hline  
				$40\times 40$ &9.0883E-03 & 1.8754 &9.2825E-03 & 1.8482  \\
				\hline  
				$80\times 80$ &1.9747E-03 & 2.2024 &1.9630E-03 & 2.2414  \\
				\hline  
				$160\times 160$ &2.9980E-04 & 2.7195 &2.9802E-04 & 2.7196  \\
				\hline

				\multicolumn{5}{|c|}{$k=2$}\\		
				\hline  
				$20\times 20$ &4.2906E-03 & -- &3.7907E-03 & --  \\
				\hline  
				$40\times 40$ &2.4672E-04 & 4.1202 &2.4041E-04 & 3.9789  \\
				\hline  
				$80\times 80$ &2.5283E-05 & 3.2866 &2.5174E-05 & 3.2555  \\
				\hline  
				$160\times 160$ &3.6150E-06 & 2.8061 &3.6139E-06 & 2.8003  \\
				\hline
				
				\multicolumn{5}{|c|}{$k=3$}\\
				
				\hline  
				$20\times 20$ &5.8466E-04 & -- &5.5364E-04 & --  \\
				\hline  
				$40\times 40$ &1.9759E-05 & 4.8870 &1.9475E-05 & 4.8292  \\
				\hline  
				$80\times 80$ &8.2453E-07 & 4.5828 &8.1693E-07 & 4.5753  \\
				\hline  
				$160\times 160$ &4.0451E-08 & 4.3493 &4.0772E-08 & 4.3246  \\
				\hline
		\end{tabular}} 
		\label{tab5}
	\end{table}

		\begin{Ex}
			[Double Mach reflection]
			\label{ex:DoubleMach}
		We consider the double Mach reflection problem \cite{woodward1984numerical}, which is a standard test for evaluating high-resolution schemes in capturing complex shock interactions. This test example involves a Mach 10 shock wave initially positioned at the bottom boundary and making a $60^\circ$ angle with the $x$-axis. The computational domain is $\Omega = [0,4]\times[0,1]$, with the shock initially located at $(x,y)=(1/6,0)$. The initial condition is given by
		$$
		\left( \rho ,u,v,p \right) =\begin{cases} 
			\left( 8,8.25\cos \left( \displaystyle\dfrac{\pi}{6} \right) ,-8.25\sin \left( \dfrac{\pi}{6} \right) ,116.5 \right) , & x<\dfrac{1}{6}+\dfrac{y}{\sqrt{3}}, \\
			\left( 1.4,0,0,1 \right) , & x\ge \dfrac{1}{6}+\dfrac{y}{\sqrt{3}}. \\
		\end{cases}
		$$
		The inflow and outflow boundary conditions are imposed on the left and right boundaries, respectively. For the bottom boundary, the exact post-shock condition is applied for the portion $0\le x\le 1/6$, while a reflective boundary condition is used for the remainder. For the top boundary, the post-shock condition is imposed for the region $0<  x < 1/6 + (1 + 20t)/\sqrt 3$, and the pre-shock condition is applied for the rest. This test is particularly useful for assessing the scheme's capability to resolve intricate shock structures, including the Mach stem, the reflected shock, and the complex triple-point trajectory that develops as the shock propagates. 
		
		We simulate this problem until $T=0.2$.  For this example, the PP limiter is utilized. In Fig. \ref{figDoubleMach1}, we present the results of density with $N_x\times N_y=960\times 240$ and $N_x\times N_y = 1920\times 480$, and the zoomed-in graphs are shown in Fig. \ref{figDoubleMach2}. The results show that our scheme accurately captures the complex shock structures with remarkably low numerical dissipation. Compared to results in the literature \cite{liu2024non, wei2024jump, liu2024entropy, peng2024oedg}, our method resolves significantly finer flow features. Moreover, compared to the results of the semi-discrete ES scheme \cite{chen2017entropy}, our results are more stable. These comparisons indicate that our fully-discrete ES scheme could achieve a certain balance between low-dissipation and numerical stability.
		\end{Ex}

		\begin{figure}[htbp!]
			\centering
			\subfigure[$960\times 240$]{
				\includegraphics[width=0.95\linewidth]{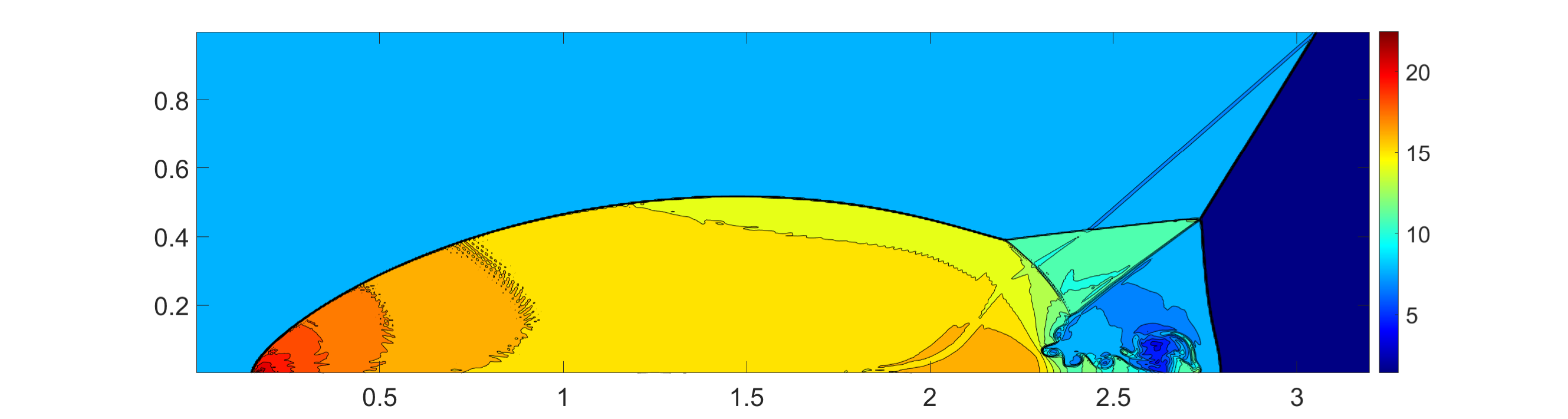}}
			\subfigure[$1920\times 480$]{
				\includegraphics[width=0.95\linewidth]{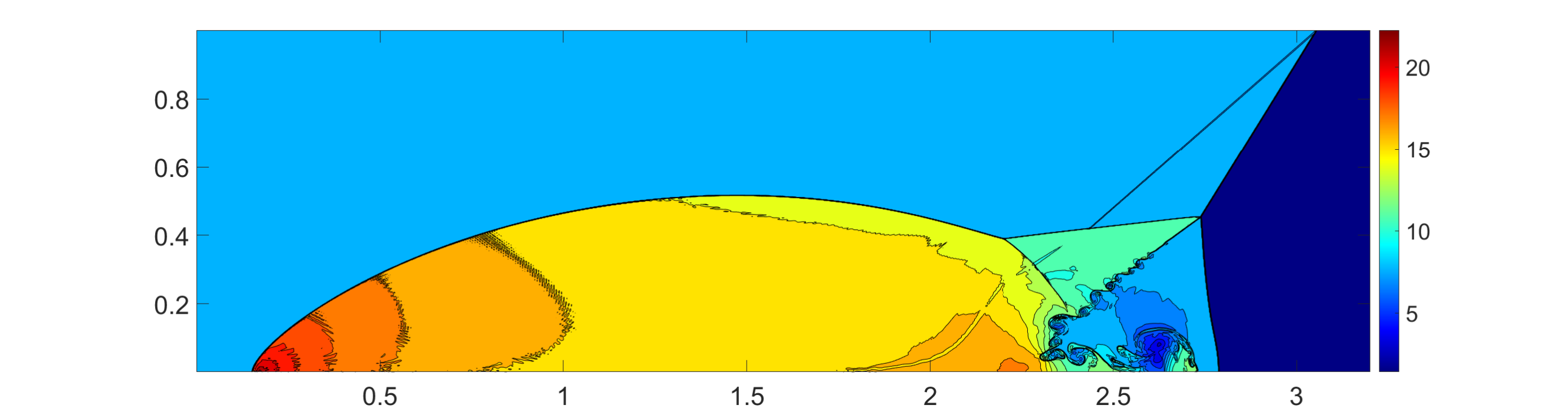}}
			\caption{Example \ref{ex:DoubleMach}: Double Mach reflection. The density at $T = 0.2$. 20 contour lines are used. }
			\label{figDoubleMach1}
		\end{figure}

		\begin{figure}[htbp!]
			\centering
			\subfigure[$960\times 240$]{
				\includegraphics[width=0.45\linewidth]{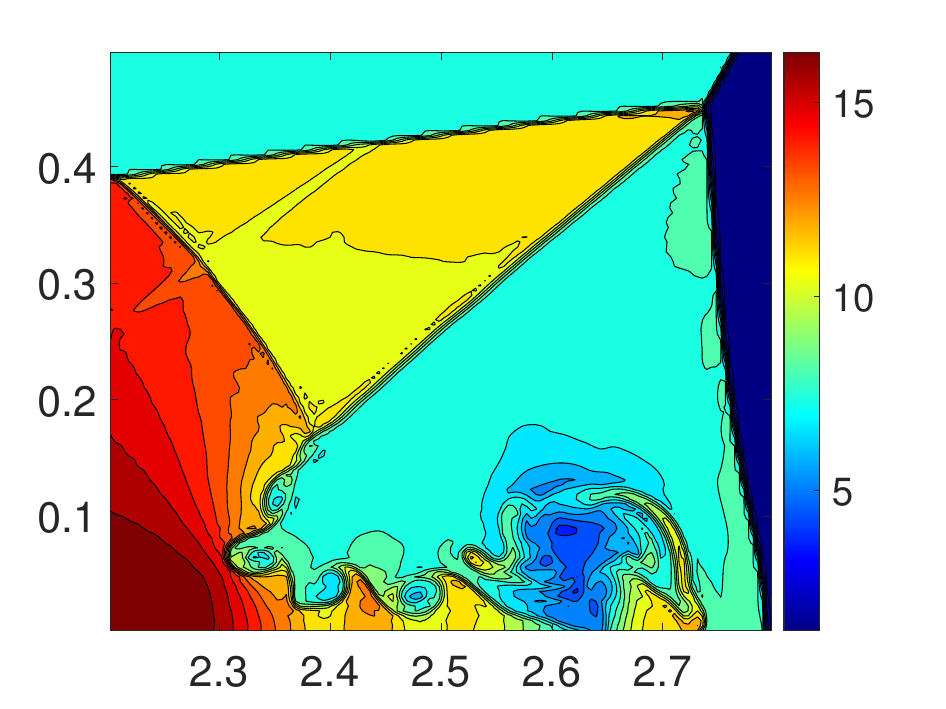}}
			\subfigure[$1920\times 480$]{
				\includegraphics[width=0.45\linewidth]{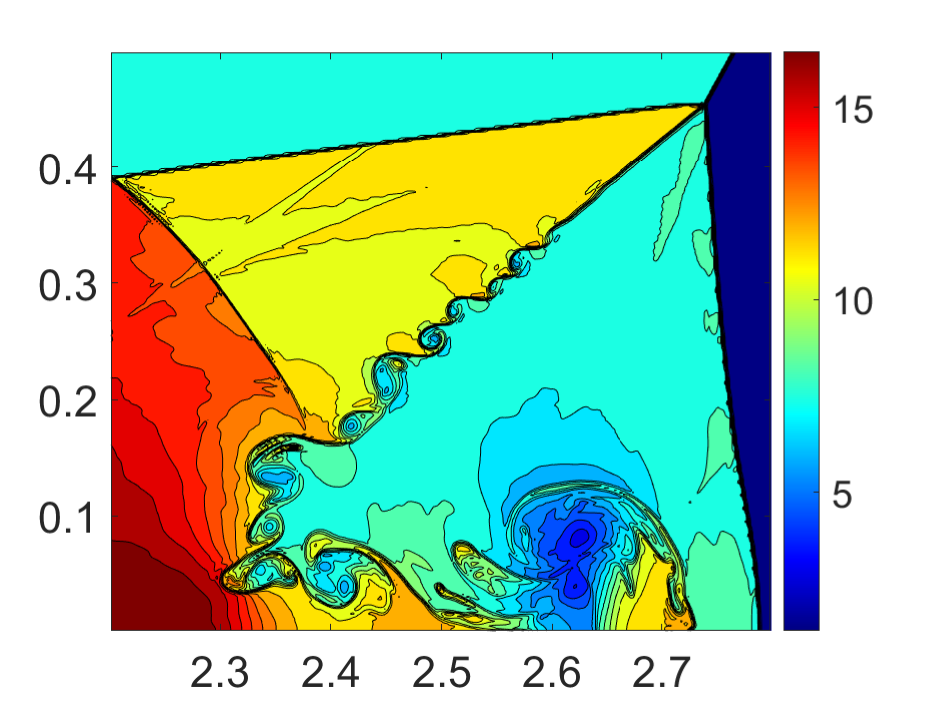}}
			\caption{Example \ref{ex:DoubleMach}: Double Mach reflection. Zoomed-in figure. The numerical solution of density near the Mach stem at $T = 0.2$. 20 contour lines are used. }
			\label{figDoubleMach2}
		\end{figure}

		\begin{Ex}
			[Astrophysical jet]
			\label{ex:jet}
		We consider the high Mach number astrophysical jet problem \cite{zhang2010positivity}. This test is particularly useful for assessing the scheme's robustness and its ability to maintain positivity of density and pressure in extreme flow conditions. This test example involves a Mach 2000 jet propagating into a uniform ambient medium $(\rho,u,v,p)=(0.5,0,0,0.4127)$ in the computational domain $\Omega = [0,1]\times [-0.25,0.25]$. The jet state $(\rho,u,v,p) = (5,800,0,0.4127)$ is injected from the left boundary in the range $\left|y\right|<0.05$. Outflow conditions are applied on all remaining boundaries. The solution exhibits complex shock structures including bow shocks, Mach disks, and internal jet shocks. We note that $\gamma = 5/3$ in this example. For this example, the code can produce negative density or pressure, causing the numerical computation to blow up very easily.
		
		We simulate this problem until $T=0.001$. For this example, the PP limiter is employed. In Fig. \ref{figjet}, we present the results of density, pressure and temperature $p/\rho$ with $N_x\times N_y=512\times256$ meshes. Our scheme successfully simulates this extremely challenging problem with Mach number 2000 and maintains positivity of density and pressure throughout the computation. Notably, the results exhibit much richer small-scale structures in the shock interaction region compared to existing methods \cite{zhang2010positivity, liu2024entropy, peng2024oedg}, highlighting the superior resolution capability and low-dissipation property achieved through fully-discrete entropy stability.
		\end{Ex}

		\begin{figure}[htbp!]
			\centering
			\subfigure[Density.]{
				\includegraphics[width=0.45\linewidth]{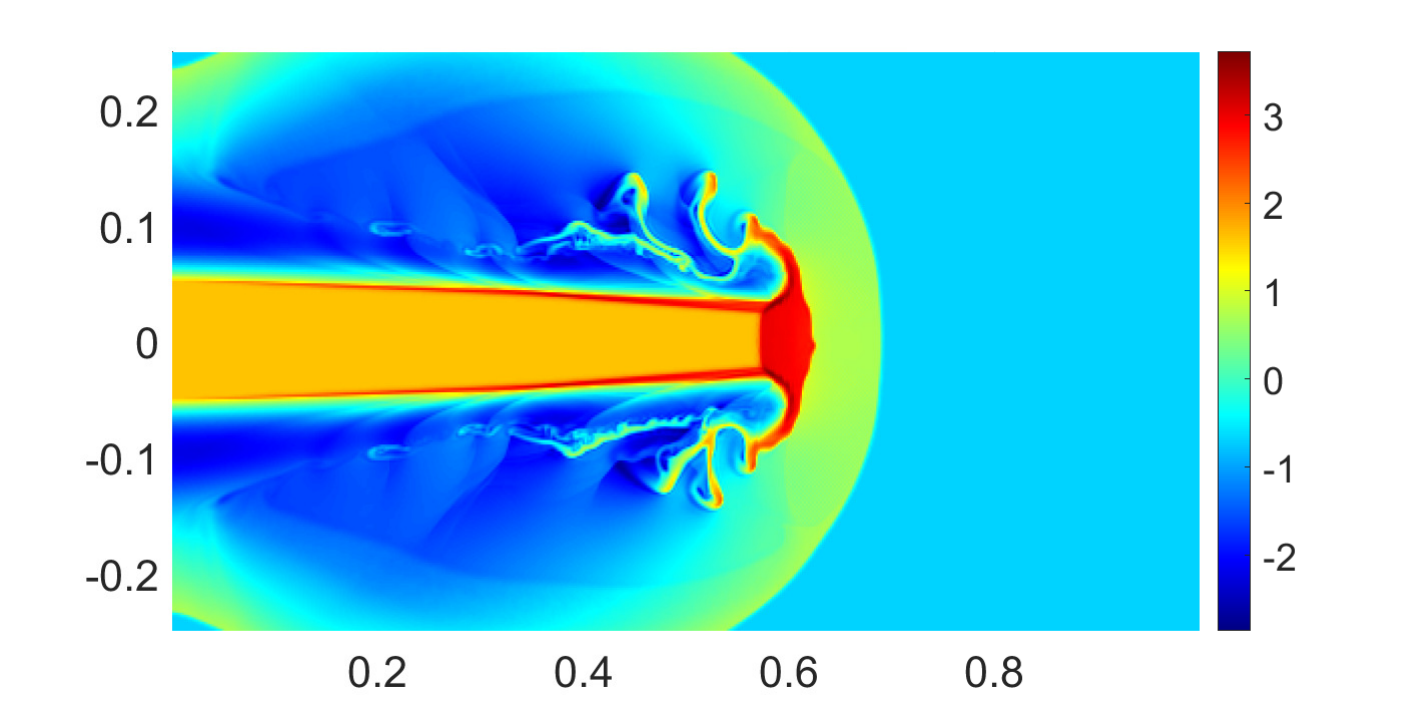}}
			\subfigure[Pressure.]{
				\includegraphics[width=0.45\linewidth]{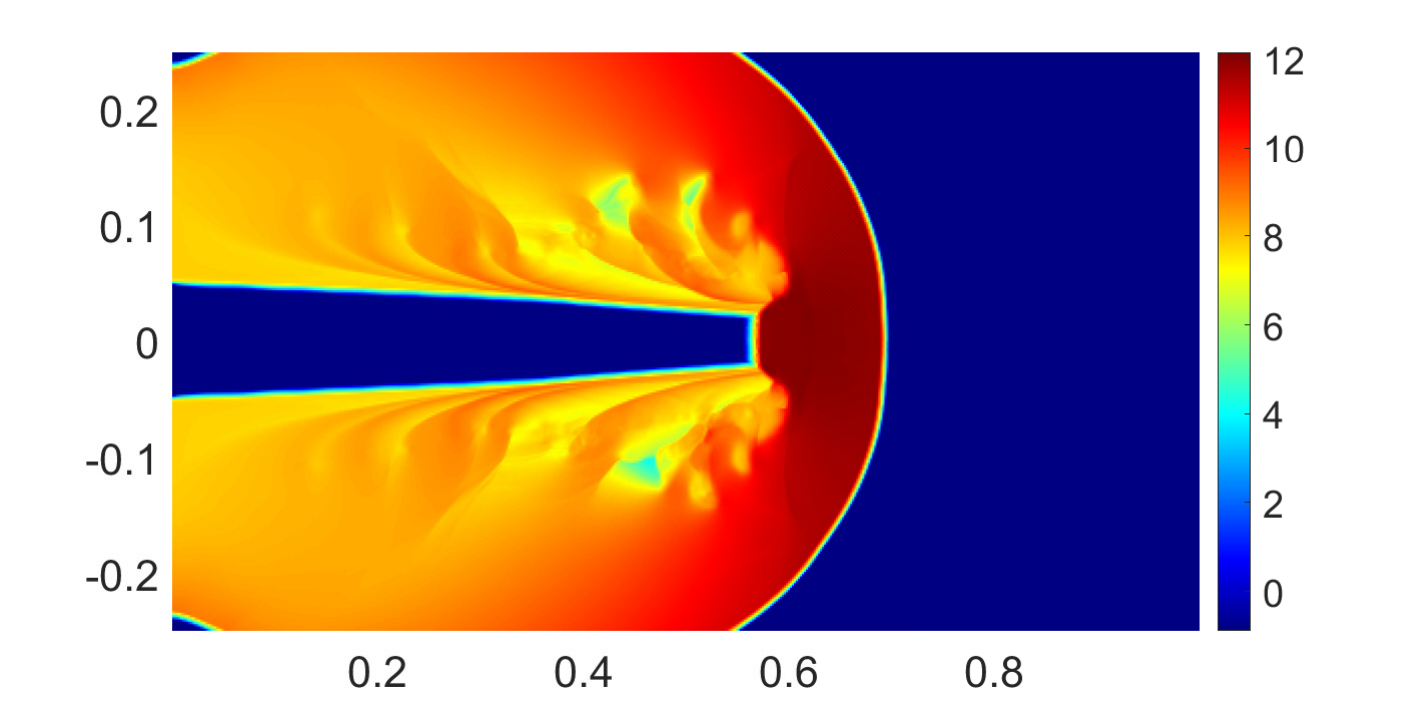}}
			\subfigure[Temperature.]{
				\includegraphics[width=0.45\linewidth]{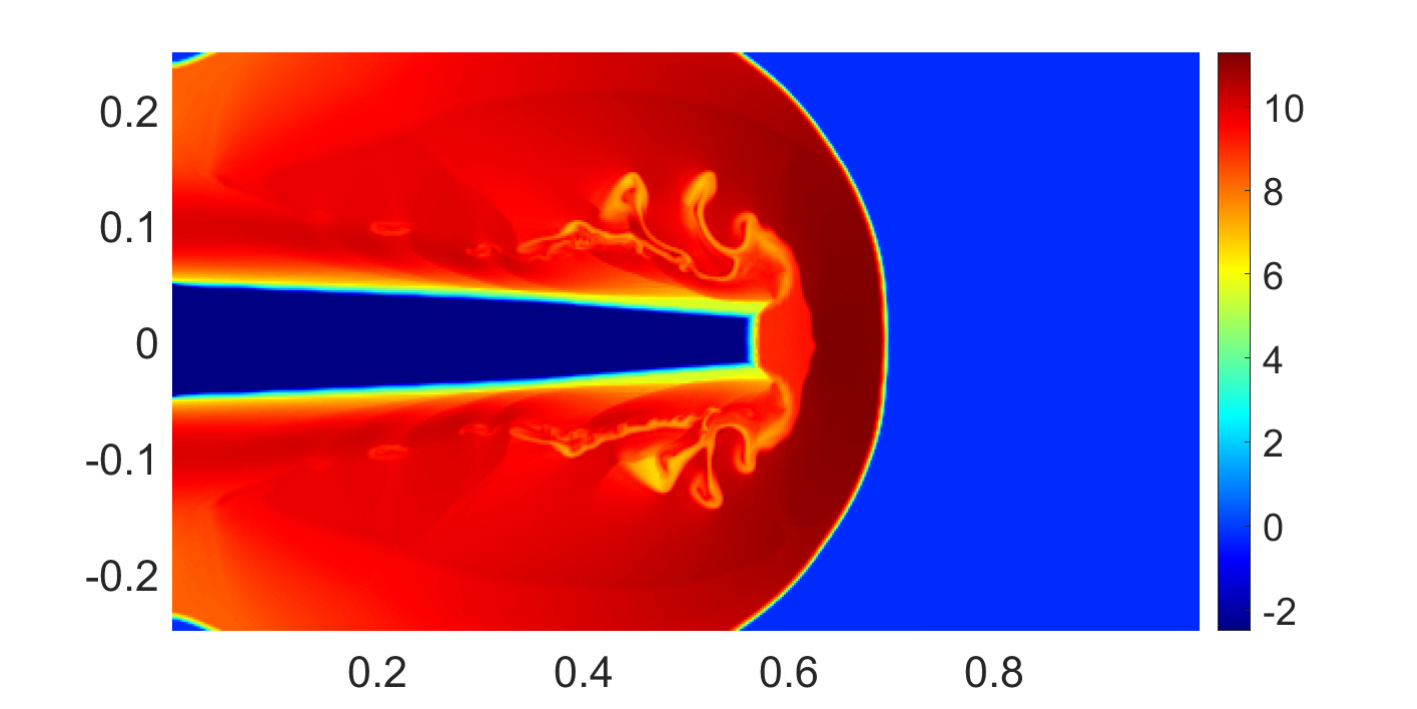}}
			\caption{Example \ref{ex:jet}: Astrophysical jet. The numerical solution  at $T = 0.001$ with $N_x \times  N_y = 512\times 256$. Scales are logarithmic.}
		\label{figjet}
	\end{figure}

	\section{Concluding remarks}\label{sec6}
	
	In this paper, we have developed the first fully-discrete entropy stable explicit DG scheme for hyperbolic conservation laws through a novel limiter-based post-processing approach. The key idea is based on the fact that the cell averages of the classical DG scheme with forward Euler time discretization inherently satisfy an ES-like property, even without any modifications. Building upon this insight, we achieve rigorous entropy stability by compressing the numerical solution toward the cell averages using the Zhang--Shu limiter. High-order temporal accuracy is attained through SSP multistep methods while maintaining entropy stability. Theoretically, we prove the high-order accuracy of the limiter, and establish the Lax--Wendroff-type theorems for the scheme.
	
	Numerical experiments validate the theoretical predictions and demonstrate optimal convergence rates. For the Buckley--Leverett problem, our scheme accurately captures multiple classical discontinuous structures simultaneously by imposing multiple entropy inequalities. For the Euler equations, even without shock limiters, our scheme handles extreme test cases, highlighting the crucial importance of fully-discrete entropy stability over semi-discrete approaches.
	
	Future research directions include extending the framework to SSP-RK time discretization and other conservation laws such as ideal MHD equations.
	
	\appendix

		\section{Lax--Wendroff-type Theorems}
		
		Before the proof, we first recall the boundedness properties of the DG solution, which play a crucial role in the proof of the theorem.
		\begin{proposition}[Boundedness]\label{assu:bnd}
			There exists a constant $C$, such that for any bounded $u_h, v_h\in V_h^k$,
			\begin{align}
				\left|\bar u_i-\bar v_i\right|&\le C\left\|u_h-v_h\right\|_{L^\infty(B_i)},\label{eq:bnd1}
				\\
				\left|\hat {f}_{i\pm 1/2}(u_h)-\hat {f}_{i\pm 1/2}(v_h)\right|&\le C\left\|u_h-v_h\right\|_{L^\infty(B_i)}. \label{eq:bnd2}
			\end{align}
			Here, the constant $C$ depends on the specific ranges of $u_h$ and $v_h$, but is independent of $\Delta x$.
		\end{proposition}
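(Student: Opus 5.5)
The plan is to treat both estimates as elementary consequences of two facts: a cell average is a weighted average of point values, and the numerical flux is Lipschitz. The constant will come from the values taken by $u_h,v_h$, and will not involve the mesh.

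For \eqref{eq:bnd1}, write the average through the $N$-point Gauss--Lobatto rule, which is exact for $\mathbb P^k$:
\[
\bar u_i-\bar v_i=\sum_{q=1}^N\omega_q\bigl(u_h-v_h\bigr)(\hat x_{i,q}).
\]
The weights satisfy $\omega_q>0$ and $\sum_q\omega_q=1$, so
\[
\left|\bar u_i-\bar v_i\right|\le \|u_h-v_h\|_{L^\infty(I_i)}\le \|u_h-v_h\|_{L^\infty(B_i)},
\]
since $I_i\subset B_i$. Thus \eqref{eq:bnd1} holds with $C=1$. Equivalently, one can use $\frac1{\Delta x}\int_{I_i}|u_h-v_h|\,\mathrm dx$ directly.

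For \eqref{eq:bnd2}, recall that $\hat f_{i+1/2}(u_h)=\hat f(u_{i+1/2}^-,u_{i+1/2}^+)$. The two traces come from $I_i$ and $I_{i+1}$, which both lie in $B_i=I_{i-1}\cup I_i\cup I_{i+1}$; likewise the traces at $x_{i-1/2}$ come from $I_{i-1}\cup I_i\subset B_i$. Let $L$ be a Lipschitz constant of $\hat f$ on the compact set $[-M,M]^2$, where $M$ bounds the ranges of $u_h$ and $v_h$. Then
\[
\left|\hat f_{i+1/2}(u_h)-\hat f_{i+1/2}(v_h)\right|\le L\bigl(|u^-_{i+1/2}-v^-_{i+1/2}|+|u^+_{i+1/2}-v^+_{i+1/2}|\bigr)\le 2L\|u_h-v_h\|_{L^\infty(B_i)}.
\]
The one-sided traces are limits of interior values of the polynomials, so each is bounded by the sup norm on the closed cell.

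The only delicate point is the Lipschitz constant for the LF flux \eqref{eq:LF}, because $\alpha=\max_{I(u^-,u^+)}|f'|$ depends on the arguments.

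- On the bounded set $[-M,M]^2$, the function $\alpha(u^-,u^+)$ is Lipschitz. It is a max of the continuous function $|f'|$ over an interval whose endpoints vary Lipschitz-continuously, and $f\in C^2$ makes $|f'|$ Lipschitz on $[-M,M]$.
- It is also bounded there, by $\max_{[-M,M]}|f'|$.
- Hence $\alpha(u^+-u^-)$ is Lipschitz on $[-M,M]^2$, and so is $\hat f$.

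Taking $C=\max(1,2L)$, which depends on $M$ and $f$ but not on $\Delta x$, completes the argument.
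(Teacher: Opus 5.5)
Your argument is correct. The paper does not actually prove this proposition. It ``recalls'' it as a standing property of the DG discretization from the Lax--Wendroff framework of \cite{shi2018local}, and its label suggests it originally served as an assumption. So there is no in-paper proof to compare against.

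Your verification is the natural one, and it uses the same two ingredients the paper uses immediately afterwards to derive Corollary~\ref{cor:bnd} from this proposition:
\begin{itemize}
\item the cell average written as a convex combination of point values, with positive Gauss--Lobatto weights summing to one (exact for $\mathbb P^k$ since $2N-3\ge k$), which gives $C=1$ in \eqref{eq:bnd1};
\item a Lipschitz bound for the two-point flux, with both traces taken from cells contained in $B_i$.
\end{itemize}

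The only step that goes beyond the paper is your check that the LF flux with the local $\alpha$ is Lipschitz. The paper simply postulates Lipschitz continuity of $\hat f$ when it introduces the scheme, so you could cite that instead. Your check is valid, since the maximum of a Lipschitz function over an interval with Lipschitz-varying endpoints is Lipschitz. However, it needs $f'$ to be Lipschitz on $[-M,M]$, for example $f\in C^2$, which the paper never states. With $f$ merely $C^1$, the product $\alpha(u^-,u^+)(u^+-u^-)$ need not be Lipschitz, so state that hypothesis explicitly if you keep this step.
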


		Based on this, we can obtain the boundedness of entropy pair:
		
		\begin{corollary}[Boundedness of entropy pair]\label{cor:bnd}
			There exists a constant $C$, such that for any bounded $u_h,v_h\in V_h^k$, 
			\begin{align}
				\left|\tilde{\mathcal U}_i(u_h)-\tilde{\mathcal U}_i (v_h)\right|&\le C\left\|u_h-v_h\right\|_{L^\infty(B_i)},\label{eq:bndU1}
				\\
				\left|\hat {\mathcal F}_{i\pm 1/2}(u_h)-\hat {\mathcal F}_{i\pm 1/2}(v_h)\right|&\le C\left\|u_h-v_h\right\|_{L^\infty(B_i)}. \label{eq:bndU2}
			\end{align}
			In particular,
			\begin{align}
				\left|\tilde{\mathcal U}_i^n-\mathcal U\left(u_h\left(x_i\right)\right)\right|&\le C\left\|u_h(x)-u_h(x_i)\right\|_{L^\infty(B_i)},\label{eq:bndUbar}
				\\
				\left|\hat{\mathcal F}_{i\pm 1/2}-\mathcal F(u_h(x_i))\right|&\le C\left\|u_h(x)-u_h(x_i)\right\|_{L^\infty(B_i)}.\label{eq:bndF}
			\end{align}
			Here, the constant $C$ depends on the specific range of $u_h$ and $v_h$, but is independent of $\Delta x$.
		\end{corollary}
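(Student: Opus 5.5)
The plan is to derive all four estimates from Proposition~\ref{assu:bnd}, using only that $\mathcal U\in C^2$ and that a proper numerical entropy flux is Lipschitz on bounded sets. Throughout, fix a compact interval $K$ containing the ranges of $u_h$ and $v_h$. On $K$, $\mathcal U$ and $\mathcal U'$ are bounded, say $|\mathcal U'|\le L_K$.

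\textbf{Estimate \eqref{eq:bndU1}.} Each Gauss--Lobatto node $\hat x_{i,q}$ lies in $I_i\subset B_i$. By the mean value theorem,
\[
\left|\tilde{\mathcal U}_i(u_h)-\tilde{\mathcal U}_i(v_h)\right|\le \sum_{q=1}^N\omega_q\left|\mathcal U(u_h(\hat x_{i,q}))-\mathcal U(v_h(\hat x_{i,q}))\right|\le L_K\left\|u_h-v_h\right\|_{L^\infty(B_i)},
\]
since the weights are positive and sum to one.

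\textbf{Estimate \eqref{eq:bndU2}.} The traces $u_{i\pm1/2}^{\pm}$ are values of $u_h$ on $I_{i-1}\cup I_i\cup I_{i+1}=B_i$, taken as one-sided limits, and the same holds for $v_h$. It therefore suffices to show that $\hat{\mathcal F}$ is Lipschitz on $K\times K$. By \eqref{eq:properflux}, $\partial_{u^\pm}\hat{\mathcal F}=\mathcal U'(u^\pm)\,\partial_{u^\pm}\hat f$. The flux $\hat f$ is Lipschitz, so its partial derivatives exist a.e.\ and are bounded on $K\times K$. Hence $\hat{\mathcal F}$ is Lipschitz there with constant at most $L_K$ times the Lipschitz constant of $\hat f$. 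Splitting the increment into one argument at a time gives
\[
|\hat{\mathcal F}(a,b)-\hat{\mathcal F}(c,d)|\le C(|a-c|+|b-d|),
\]
and thus \eqref{eq:bndU2}. For the LF pair \eqref{eq:LFF} this can also be checked directly, treating $\alpha$ as a Lipschitz function of its arguments. In the same spirit one could instead invoke \eqref{eq:bnd2} together with this structure.

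\textbf{Estimates \eqref{eq:bndUbar} and \eqref{eq:bndF}.} Apply the previous two estimates with $v_h\equiv u_h(x_i)$, the constant function on $B_i$, which lies in $V_h^k$ locally. This is legitimate because both quantities depend only on the values of $v_h$ on $B_i$. For this choice $\tilde{\mathcal U}_i(v_h)=\mathcal U(u_h(x_i))$ because the weights sum to one. Likewise $\hat{\mathcal F}_{i\pm1/2}(v_h)=\hat{\mathcal F}(u_h(x_i),u_h(x_i))=\mathcal F(u_h(x_i))$ by consistency. The right-hand sides then become $\|u_h(x)-u_h(x_i)\|_{L^\infty(B_i)}$, and the constant range of $v_h$ stays inside $K$.

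\textbf{Main obstacle.} The only delicate point is the Lipschitz property of $\hat{\mathcal F}$. The LF flux has $\alpha$ depending on its arguments through a max, so it is merely Lipschitz, not $C^1$, and \eqref{eq:properflux} holds only a.e. I would handle this by noting that a function with a.e.\ bounded partial derivatives along each coordinate line, and absolutely continuous along those lines, is Lipschitz. Alternatively, I would verify the bound directly from the explicit formula \eqref{eq:LFF}, using the Lipschitz continuity of $\alpha$, $\mathcal F$ and $\mathcal U$ on $K$.
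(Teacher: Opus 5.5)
Your proposal is correct and follows the paper's proof step by step: a weighted mean value bound with $\max|\mathcal U'|$ over the Gauss--Lobatto nodes gives \eqref{eq:bndU1}, and splitting $\hat{\mathcal F}$ one argument at a time, combined with \eqref{eq:properflux} and the Lipschitz bounds on $\hat f_1,\hat f_2$, gives \eqref{eq:bndU2}. Estimates \eqref{eq:bndUbar}--\eqref{eq:bndF}, which the paper leaves implicit, then follow from the choice $v_h\equiv u_h(x_i)$ together with consistency and $\sum_q\omega_q=1$.

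Your caution about the non-smooth local wave speed $\alpha$ is a real refinement, because the paper applies the mean value theorem to $\hat{\mathcal F}$ as if it were differentiable. Of your two fixes, the direct Lipschitz check from \eqref{eq:LFF} is the safer one. With an endpoint-dependent $\alpha$, the $\partial_{u^\pm}\alpha$ terms carry $\mathcal U(u^+)-\mathcal U(u^-)$ in $\partial_{u^\pm}\hat{\mathcal F}$ but $\mathcal U'(u^\pm)(u^+-u^-)$ in $\mathcal U'(u^\pm)\,\partial_{u^\pm}\hat f$. So \eqref{eq:properflux} can fail on a set of positive measure, not just at kinks.
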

		\begin{proof}
			Assume $u_h,v_h\in [m,M]$, and denote $C_\mathcal U=\max\limits_{\eta\in[m,M]}\left| \mathcal U'(\eta)\right|$. For the entropy average $\tilde{\mathcal U}_i$, utilizing \eqref{eq:bnd1}, we have
			$$\begin{aligned}\left| \tilde{\mathcal{U}}_i\left( u_h \right) -\tilde{\mathcal{U}}_i\left( v_h \right) \right|&=\left| \sum_{q=1}^N{\omega _q\left( \mathcal{U} \left( u_h\left( \hat{x}_{i}^{q} \right) \right) -\mathcal{U} \left( v_h\left( \hat{x}_{i}^{q} \right) \right) \right)} \right|
				\\
				&\le C_\mathcal U \sum_{q=1}^N{\omega _q\left| u_h\left( \hat{x}_{i}^{q} \right) -v_h\left( \hat{x}_{i}^{q} \right) \right|} 
				\\
				&\le C_\mathcal U\sum_{q=1}^N{\omega _q\left\| u_h-v_h \right\| _{L^{\infty}\left( B_i \right)}} = C_\mathcal U\left\| u_h-v_h \right\| _{L^{\infty}\left( B_i \right)}.
			\end{aligned}
			$$
			For the proper numerical entropy flux, using the definition \eqref{eq:properflux}, notice that for any $u^\pm,v^\pm\in[m,M]$, by \eqref{eq:bnd2} we have
			$$\begin{aligned}&\left| \hat{\mathcal{F}}\left( u^-,u^+ \right) -\hat{\mathcal{F}}\left( v^-,v^+ \right) \right|
				\\
				&=\left| \hat{\mathcal{F}}\left( u^-,u^+ \right) -\hat{\mathcal{F}}\left( v^-,u^+ \right) +\hat{\mathcal{F}}\left( v^-,u^+ \right) -\hat{\mathcal{F}}\left( v^-,v^+ \right) \right|
				\\
				&=\left| \hat{\mathcal F}_1(\xi,u^+)(u^--v^-)+\hat{\mathcal F}_2(v^-,\zeta) (u^+-v^+)\right|
				\\
				&=\left|\mathcal U'(\xi)\hat f_1(\xi,u^+)(u^--v^-)+\mathcal U'(\zeta)\hat f_2(v^-,\zeta)(u^+-v^+)\right|
				\\ &\le C_\mathcal UL_1\left|u^--v^-\right|+C_\mathcal UL_2\left|u^+-v^+\right|,
			\end{aligned}
			$$
			where $\xi\in I(v^-,u^-), \zeta\in I(v^+,u^+)$, and $L_1,L_2$ are Lipschitz constants of flux function $\hat f$. Hence,
            \begin{align*} \left|\hat{\mathcal F}_{i\pm 1/2}(u_h)-\hat{\mathcal F}_{i\pm 1/2}(v_h)\right|&\le C_\mathcal UL_1\left|u_{i\pm 1/2}^--v_{i\pm 1/2}^-\right|+C_\mathcal UL_2\left|u_{i\pm 1/2}^+-v_{i\pm 1/2}^+\right|
            \\ &\le C\left\|u_h-v_h\right\|_{L^\infty(B_i)}.  
            \end{align*}
            We complete the proof.
		\end{proof}

		Moreover, under Assumption \ref{assu:conv} and \ref{assu:tvd}, we can further get following useful properties of entropy pair. They are easy to prove by using the mean-value theorem.
		
		\begin{corollary}[Convergence of entropy pair]\label{cor:conv}
			Under Assumption \ref{assu:conv}, for the initial condition, the entropy satisfies
			\begin{equation}\label{eq:convergeicU}
				\int_{\mathbb R}(\mathcal U(u_h^0(x))-\mathcal U(u_0(x)))\phi(x)\mathrm dx\to 0\quad \mathrm{as}\ \ \Delta x\to 0
			\end{equation}
			for any $\phi\in C_0^\infty(\mathbb R)$. And $\mathcal U(u_h),\mathcal F(u_h)$ converges boundedly 
			to $\mathcal U(u^\star(x,t)), \mathcal F(u^\star(x,t))$ in the sense of
			\begin{align}
				\sum\limits_{n}\int_{t^n}^{t^{n+1}}\int_{\mathbb R}(\mathcal U(u_h^{n+1}(x))-\mathcal U(u^\star(x,t)))\phi(x,t)\mathrm dx \mathrm dt&\to 0\quad \mathrm{as}\ \  \Delta x,\Delta t\to 0,\label{eq:convergeU}
				\\ \sum\limits_{n}\int_{t^n}^{t^{n+1}}\int_{\mathbb R}(\mathcal F(u_h^{n+1}(x))-\mathcal F(u^\star(x,t)))\phi(x,t)\mathrm dx \mathrm dt&\to 0\quad \mathrm{as}\ \  \Delta x,\Delta t\to 0 \label{eq:convergeF}
			\end{align}
			for any $\phi\in C_0^\infty(\mathbb R\times\mathbb R^+)$.
		\end{corollary}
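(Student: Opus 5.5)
The plan is to reduce both claims to the mean-value theorem combined with the bounded convergence in Assumption \ref{assu:conv}. The key input is that the numerical solutions are uniformly bounded: bounded convergence gives a compact interval $[m,M]$ containing $u_h^n(x)$ for all $n$, $x$ and mesh sizes, and containing $u^\star$ and $u_0$ almost everywhere. Since $\mathcal U\in C^2$ and $\mathcal F'=\mathcal U' f'$ is continuous, both $\mathcal U$ and $\mathcal F$ are Lipschitz on $[m,M]$. Write $L$ for the common Lipschitz constant. The mean-value theorem then gives pointwise
\[
\mathcal U(u_h^{n+1}(x))-\mathcal U(u^\star(x,t))=\mathcal U'(\xi)\,\bigl(u_h^{n+1}(x)-u^\star(x,t)\bigr),
\]
and the analogous identity for $\mathcal F$.

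The difficulty is that Assumption \ref{assu:conv} is stated only as weak convergence against smooth test functions. The factor $\mathcal U'(\xi)$ depends on $(x,t)$ and on the mesh, so it cannot be absorbed into $\phi$, and weak convergence by itself does not pass through nonlinear functions. I would therefore read "converges boundedly" as it is used in the classical Lax--Wendroff theorem \cite{leveque1992numerical}: uniform boundedness together with $L^1_{\mathrm{loc}}$ convergence of $u_h$ to $u^\star$ on the support of $\phi$. Note that Assumption \ref{assu:tvd}, available in the surrounding context, supplies the compactness that usually underlies this strong convergence.

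Under that reading, for $\phi\in C_0^\infty$ with support in $K$, the estimate is immediate:
\[
\Bigl|\sum_n\int_{t^n}^{t^{n+1}}\!\!\int_{\mathbb R}\bigl(\mathcal U(u_h^{n+1})-\mathcal U(u^\star)\bigr)\phi\,\mathrm dx\,\mathrm dt\Bigr|\le L\|\phi\|_\infty\sum_n\int_{t^n}^{t^{n+1}}\!\!\int_K\bigl|u_h^{n+1}-u^\star\bigr|\,\mathrm dx\,\mathrm dt\to0 .
\]
The same estimate with $\mathcal F$ in place of $\mathcal U$ gives \eqref{eq:convergeF}. For the initial data, $|\mathcal U(u_h^0)-\mathcal U(u_0)|\le L|u_h^0-u_0|$, and the same argument yields \eqref{eq:convergeicU}. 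Uniform boundedness of $\mathcal U(u_h)$ and $\mathcal F(u_h)$ follows from continuity on $[m,M]$, so their convergence is again bounded.

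The main obstacle is justifying the strong $L^1_{\mathrm{loc}}$ convergence. If only the weak form stated in Assumption \ref{assu:conv} is available, I would first try to obtain it from the uniform bound and Assumption \ref{assu:tvd} by a Helly/Kolmogorov-type compactness argument: a subsequence converges in $L^1_{\mathrm{loc}}$, and its limit must equal $u^\star$ by uniqueness of weak limits, so the whole family converges. Failing that, I would simply state explicitly that "converges boundedly" means bounded $L^1_{\mathrm{loc}}$ convergence, as in the classical Lax--Wendroff setting, after which the corollary follows from the mean-value estimate above.
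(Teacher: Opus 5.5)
This is the paper's argument. The paper only remarks that the corollary follows from the mean-value theorem, and, as you correctly point out, that step requires $u_h\to u^\star$ boundedly a.e.\ or in $L^1_{\mathrm{loc}}$ (the classical Lax--Wendroff meaning of ``converges boundedly''), since the weak form displayed in Assumption~\ref{assu:conv} does not pass through the nonlinear maps $\mathcal U$ and $\mathcal F$.

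However, drop the fallback of deriving this from Assumption~\ref{assu:tvd}: that assumption is weaker than TVB and gives no compactness. Piecewise constants alternating between $a\ne b$ on blocks of length $\sqrt{\Delta x}$ inside a fixed bounded interval satisfy both assumptions, yet on that interval $\mathcal U(u_h)$ converges weakly to $\frac{1}{2}(\mathcal U(a)+\mathcal U(b))>\mathcal U(\frac{a+b}{2})=\mathcal U(u^\star)$ for strictly convex $\mathcal U$, so the strong reading must be imposed as a hypothesis rather than derived.
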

		
		\begin{corollary}[TVB-like property of entropy function]
			Under Assumption \ref{assu:conv} and \ref{assu:bnd}, the entropy of numerical solution satisfies
			\begin{equation}\label{eq:tvdU}
				\sup\limits_{n}\Delta x\sum\limits_{i} \max\limits_{x\in B_i}\left|\mathcal U(u_h^n(x))-\mathcal U(u_h^n(x_i))\right|\to 0\quad \mathrm{as}\ \ \Delta x,\Delta t\to 0.
			\end{equation}
		\end{corollary}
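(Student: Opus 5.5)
The estimate \eqref{eq:tvdU} follows from Assumption \ref{assu:tvd} through a pointwise Lipschitz bound on $\mathcal U$; no new cancellation is needed. I read the hypotheses as Assumptions \ref{assu:conv} and \ref{assu:tvd}, together with the boundedness of the numerical solutions. The bounded convergence in Assumption \ref{assu:conv} is precisely what supplies a uniform range $[m,M]$ for $u_h^n$, valid for all $n$ and all mesh sizes.

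\textbf{Step 1 (uniform range).} Let $m\le u_h^n(x)\le M$ for all $x$, $n$, $\Delta x$ and $\Delta t$. Since $\mathcal U\in C^2(\mathcal D)$, the constant
$$C_{\mathcal U}:=\max_{\eta\in[m,M]}|\mathcal U'(\eta)|$$
is finite and independent of $\Delta x$, $\Delta t$, $n$ and $i$.

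\textbf{Step 2 (pointwise Lipschitz bound).} Fix $i$, $n$ and $x\in B_i$. By the mean-value theorem there is $\xi\in I(u_h^n(x),u_h^n(x_i))\subset[m,M]$ with
$$\mathcal U(u_h^n(x))-\mathcal U(u_h^n(x_i))=\mathcal U'(\xi)\bigl(u_h^n(x)-u_h^n(x_i)\bigr).$$
Hence
$$|\mathcal U(u_h^n(x))-\mathcal U(u_h^n(x_i))|\le C_{\mathcal U}\,|u_h^n(x)-u_h^n(x_i)|.$$
Taking the maximum over $x\in B_i$ gives
$$\max_{x\in B_i}|\mathcal U(u_h^n(x))-\mathcal U(u_h^n(x_i))|\le C_{\mathcal U}\max_{x\in B_i}|u_h^n(x)-u_h^n(x_i)|.$$

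\textbf{Step 3 (sum and pass to the limit).} Multiply by $\Delta x$, sum over $i$, and take $\sup_n$. Because $C_{\mathcal U}$ is uniform, the supremum of the left-hand side of \eqref{eq:tvdU} is bounded by $C_{\mathcal U}$ times the quantity in \eqref{eq:tvd}. The latter tends to zero as $\Delta x,\Delta t\to0$ by Assumption \ref{assu:tvd}, which proves the claim.

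\textbf{Main obstacle.} The only delicate point is Step 1: the Lipschitz constant must be uniform in $n$ and in the mesh. Plain $L^1_{\rm loc}$-type convergence would not give this. The uniformity comes from the word ``boundedly'' in Assumption \ref{assu:conv}, which is also how the constants in Proposition \ref{assu:bnd} and Corollary \ref{cor:bnd} are obtained. For systems, or for an entropy defined only on an open set $\mathcal D$, one would additionally need the range to lie in a compact subset of $\mathcal D$, so that $\max|\mathcal U'|$ is finite there. For the scalar case considered here, boundedness of $u_h$ suffices.
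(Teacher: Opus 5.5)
Your proposal is correct and follows the paper's argument, which it only sketches as "easy to prove by using the mean-value theorem": a uniform range $[m,M]$ comes from the bounded convergence in Assumption~\ref{assu:conv}, and the mean-value theorem bounds the entropy oscillation by $\max_{[m,M]}|\mathcal U'|$ times the quantity in Assumption~\ref{assu:tvd}. You were also right to read the hypotheses as Assumptions~\ref{assu:conv} and~\ref{assu:tvd}; the label cited in the statement is evidently a typo, as the sentence just before the corollary confirms.
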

		
		Using these properties, we can give the proof of Theorem \ref{thm:LW1} and \ref{thm:LW2}. The proof follows the technique in \cite{shi2018local}.
		
		\subsection{The proof of theorem \ref{thm:LW1}}\label{app:LW1}

		\begin{proof}
			By the definition of the ES limiter for forward Euler time discretization, we have
			$$\tilde{\mathcal U}_i^{n+1}\le \tilde{\mathcal U}_i^{n}-\lambda\left(\hat{\mathcal F}_{i+1/2}^n-\hat{\mathcal F}_{i-1/2}^n\right).$$
			Multiplying the test function $\phi_i^n:=\phi(x_i,t^n)$ on each side and summing over $i,n$, we get
			$$\sum\limits_{i\in X_\phi}\sum\limits_{n\in T_\phi}\left(\tilde{\mathcal U}_i^{n+1}- \tilde{\mathcal U}_i^{n}\right)\phi_i^n+\lambda\left(\hat{\mathcal F}_{i+1/2}^n-\hat{\mathcal F}_{i-1/2}^n\right)\phi_i^n\le 0.$$
			Here, $X_\phi=\{i:\exists n,\ (x_i,t^n)\in S\}$ and $T_\phi=\{n:\exists i,\ (x_i,t^n)\in S\}$, where $S$ denotes the support set of $\phi$. 
			Summation by part yields
			\begin{equation}\label{eq:SBP}
            \begin{aligned}-\sum\limits_{i\in X_\phi}\Delta x\,\tilde{\mathcal U}_i^0\phi_i^0&-\sum\limits_{i\in X_\phi}\sum\limits_{n\in T_\phi}\Delta x\,\tilde{\mathcal U}_i^{n+1}\left(\phi_i^{n+1}-\phi_i^n\right)
            \\ &+\sum\limits_{i\in X_\phi}\sum\limits_{n\in T_\phi}\Delta t\left(\hat{\mathcal F}_{i+1/2}^n-\hat{\mathcal F}_{i-1/2}^n\right)\phi_i^n\le 0.
			\end{aligned}
            \end{equation}
			We denote the LHS terms as $-T_1-T_2+T_3$. For $T_1$, we have
			\begin{equation}
				\begin{aligned}T_1\,-&\int_{\mathbb R}{\mathcal{U} \left( u_0\left( x \right) \right) \phi \left( x,0 \right) \mathrm{d}x}
					\\
					=&\sum_{i\in X_\phi}{\int_{I_i}{\left(\tilde{\mathcal{U}}_{i}^{0}- \mathcal{U} \left( u_0\left( x \right) \right)  \right) \phi \left( x,0 \right) \mathrm{d}x}}+\sum_{i\in X_\phi}\int_{I_i}\tilde{\mathcal U}_i^0(\phi_i^0-\phi(x,0))\mathrm dx
					\\
					=&\sum_{i\in X_\phi}{\int_{I_i}{\left(\tilde{\mathcal{U}}_{i}^{0}- \mathcal{U} \left( u_0\left( x \right) \right)  \right)\phi \left( x,0 \right) \mathrm{d}x}}+o\left( 1 \right) 
					\\
					=&\sum_{i\in X_\phi}{\int_{I_i}{ \left(\tilde{\mathcal{U}}_{i}^{0} -\mathcal U\left(u_h^0\left( x \right)\right) +\mathcal U\left(u_h^0\left( x \right)\right) -\mathcal U\left( u_0\left( x \right)\right) \right)\phi \left( x,0 \right) \mathrm{d}x}}+o\left( 1 \right).
				\end{aligned}
			\end{equation}
			Utilizing \eqref{eq:convergeicU}, we have
			$$
			\sum_{i\in X_\phi}\int_{I_i} \left(\mathcal U(u_h^0(x))-\mathcal U\left( u_0\left( x \right) \right)\right) \phi \left( x,0 \right)\mathrm dx\to 0.
			$$
			On the other hand, since $u_h$ converges boundedly to $u^\star$, we know there exists $m$ and $M$ such that $u_h\in[m,M]$ uniformly for each $\Delta x,\Delta t$, thus the premise of Proposition \ref{assu:bnd} and Corollary \ref{cor:bnd} are satisfied. By utilizing \eqref{eq:bndUbar}, \eqref{eq:tvd} and \eqref{eq:tvdU}, we have
			$$\begin{aligned}&\left| \sum_{i\in X_\phi}{\int_{I_i}{\left(\tilde{\mathcal U}_i^0 -\mathcal U\left(u_h^0\left(x\right)\right) \right) \phi \left( x,0 \right) \mathrm{d}x}} \right|
				\\ &\le \left\|\phi(\cdot,0)\right\|_{L^{\infty}(\mathbb R)}\sum_{i\in X_\phi}{\int_{I_i}{\left(\left| \tilde{\mathcal U}_{i}^0 -\mathcal U\left (u_h^0\left( x_i \right)\right)\right|+\left|\mathcal U\left(u_h^0\left(x\right)\right)-\mathcal U\left(u_h^0\left(x_i\right)\right)\right| \right)  \mathrm{d}x}}
				\\ &\le C\Delta x\sum_{i\in X_\phi} \left({\max_{B_i}  \left|u_h^0(x)-u_h^0(x_i)\right|+\max\limits_{B_i}\left| \mathcal U\left(u_h^0\left( x \right)\right) -\mathcal U\left(u_h^0\left( x_i \right)\right) \right|}\right) \\
				& \rightarrow 0.
			\end{aligned}
			$$
			Hence, $T_1\to \displaystyle\int_\mathbb R\mathcal U(u_0(x))\phi(x,0)\mathrm dx$. For $T_2$, we have 
			\begin{equation}
				\begin{aligned}
					T_2\,-&\int_{\mathbb R^+}{\int_{\mathbb R}{\mathcal{U} \left( u^{\star} \right) \phi _t\,\mathrm{d}x\mathrm{d}t}}
					\\
					=&\sum_{i\in X_\phi}{\sum_{n\in T_\phi}{\int_{t^n}^{t^{n+1}}{\int_{I_i}{\left( \tilde{\mathcal{U}}_{i}^{n+1}\frac{\phi _{i}^{n+1}-\phi _{i}^{n}}{\Delta t}-\mathcal{U} \left( u^{\star} \right) \phi _t \right)}}}}\mathrm{d}x\mathrm{d}t
					\\
					=&\sum_{i\in X_\phi}{\sum_{n\in T_\phi}{\int_{t^n}^{t^{n+1}}{\int_{I_i}{\left( \tilde{\mathcal{U}}_{i}^{n+1}-\mathcal{U} \left( u^{\star} \right) \right) \phi _t\,}}}}\mathrm{d}x\mathrm{d}t+o\left( 1 \right) 
					\\
					=&\sum_{i\in X_\phi}{\sum_{n\in T_\phi}{\int_{t^n}^{t^{n+1}}{\int_{I_i}{\left( \tilde{\mathcal{U}}_{i}^{n+1}-\mathcal{U} \left( u_{h}^{n+1}\left( x \right) \right) +\mathcal{U} \left( u_{h}^{n+1}\left( x \right) \right) -\mathcal{U} \left( u^{\star} \right) \right) \phi _t\,}}}}\mathrm{d}x\mathrm{d}t
                    \\&+o\left( 1 \right). 
				\end{aligned}
			\end{equation}
			Notice that since $\phi\in C_0^\infty(\mathbb R\times\mathbb R^+)$, we also have $\phi_t\in C_0^\infty(\mathbb R\times\mathbb R^+)$. Thus similarly, using \eqref{eq:convergeU}, \eqref{eq:bndUbar}, \eqref{eq:tvd} and \eqref{eq:tvdU}, we have $T_2\to \displaystyle\int_{\mathbb R^+}\int_\mathbb R \mathcal U(u^\star)\phi_t\,\mathrm dx\mathrm dt $. 
			
			For $T_3$, we can write
			\begin{align*}T_3=&-\sum_{i\in X_\phi}\sum_{n\in T_\phi}\Delta t\mathcal{F} \left( u_{h}^{n}\left( x_i \right) \right) \left( \phi _{i+1/2}^{n}-\phi _{i-1/2}^{n} \right) 
				\\
				&+\sum_{i\in X_\phi}\sum_{n\in T_\phi}\Delta t\left( \hat{\mathcal{F}}_{i+1/2}^{n}
				-\mathcal{F} \left( u_{h}^{n}\left( x_i \right) \right) \right) \left( \phi _{i}^{n}-\phi _{i+1/2}^{n} \right) 
				\\&-\sum_{i\in X_\phi}\sum_{n\in T_\phi}\Delta t\left( \hat{\mathcal{F}}_{i-1/2}^{n}-\mathcal{F} \left( u_{h}^{n}\left( x_i \right) \right) \right) \left( \phi _{i}^{n}-\phi _{i-1/2}^{n} \right)
				\\
				=:&\, -T_{31}+T_{32}-T_{33}.
			\end{align*}
			Using \eqref{eq:convergeF}, we have 
			$$T_{31}=\sum_{i\in X_\phi}\sum_{n\in T_\phi}\int_{t^n}^{t^{n+1}}{\int_{I_i}{\mathcal{F} \left( u_{h}^{n}\left( x_i \right) \right) \frac{\phi _{i+1/2}^{n}-\phi _{i-1/2}^{n}}{\Delta x}\mathrm{d}x\mathrm{d}t}}\rightarrow \int_{\mathbb R^+}{\int_\mathbb R{\mathcal{F} \left( u^{\star} \right) \phi _x\,\mathrm{d}x\mathrm{d}t}}.$$
			For $T_{32}$, by utilizing \eqref{eq:bndF} and \eqref{eq:tvd}, we have
			\begin{align*}
				\left| T_{32} \right|&=\frac{1}{2}\left| \sum_{i\in X_\phi}{\sum_{n\in T_\phi}{\int_{t^n}^{t^{n+1}}{\int_{I_i}{\left( \hat{\mathcal{F}}_{i+1/2}^{n}-\mathcal{F} \left( u_{h}^{n}\left( x_i \right) \right) \right)}\frac{\phi _{i+1/2}^{n}-\phi _{i}^{n}}{\Delta x/2}}}}\mathrm{d}x\mathrm{d}t \right|
				\\
				&\le \frac{1}{2}\sum_{i\in X_\phi}{\sum_{n\in T_\phi}{\int_{t^n}^{t^{n+1}}{\int_{I_i}{\left| \hat{\mathcal{F}}_{i+1/2}^{n}-\mathcal{F} \left( u_{h}^{n}\left( x_i \right) \right) \right|}\left| \phi _x \right|}}}\mathrm{d}x\mathrm{d}t+o\left( 1 \right) 
				\\
				&\le C\sum_{i\in X_\phi}{\sum_{n\in T_\phi}{\int_{t^n}^{t^{n+1}}{\int_{I_i}{\left| \hat{\mathcal{F}}_{i+1/2}^{n}-\mathcal{F} \left( u_{h}^{n}\left( x_i \right) \right) \right|}}}}\mathrm{d}x\mathrm{d}t+o\left( 1 \right) 
				\\
				&\le C\max_{n\in T_\phi} \Delta x\sum\limits_{i\in X_\phi}\max_{B_i} \left| u_{h}^{n}\left( x \right) -u_{h}^{n}\left( x_i \right) \right|+o\left( 1 \right) \rightarrow 0.
			\end{align*}
			Similarly, we have $T_{33}\to 0$. Hence, $ T_3\rightarrow -\displaystyle\int_{\mathbb R^+}{\int_\mathbb R{\mathcal{F} \left( u^{\star} \right) \phi _x\,\mathrm{d}x\mathrm{d}t}}$. Let $\Delta x,\Delta t\to 0$ in \eqref{eq:SBP}, we finally obtain \eqref{eq:ESweak1}. 
		\end{proof}
		
		\subsection{The proof of theorem \ref{thm:LW2}}\label{app:LW2}
		
		\begin{proof}
			The definition of ES limiter with multistep method implies
			$$
			\tilde{\mathcal{U}}_i^{n+1} \le \sum_{l=1}^m{\left( \alpha _l\,\tilde {\mathcal{U}}_{i}^{n+1-l}-\beta _l\lambda \left( \hat{\mathcal{F}}_{i+1/2}^{n+1-l}-\hat{\mathcal{F}}_{i-1/2}^{n+1-l} \right) \right)}.
			$$
			Multiplying the test function $\phi_i^n$ and using $\displaystyle\sum\limits_{l=1}^m\alpha_l=1$, we can get
			$$
			\sum_{l=1}^m{ \alpha _l\,\left(\tilde{\mathcal U}_i^{n+1}-\tilde {\mathcal{U}}_{i}^{n+1-l}\right)\phi_i^n+\beta _l\lambda \left( \hat{\mathcal{F}}_{i+1/2}^{n+1-l}-\hat{\mathcal{F}}_{i-1/2}^{n+1-l} \right)\phi_i^n }\le 0.
			$$
			Summing over $i,n$ yields
			\begin{equation}\label{eq:MSall}
				\sum\limits_{l=1}^m\sum_{i\in X_\phi}\sum_{n\in T_{\phi,m}}\alpha _l\Delta x\left( \tilde{\mathcal{U}}_{i}^{n+1}-\tilde{\mathcal{U}}_{i}^{n+1-l} \right) \phi _{i}^{n}+\beta _l\Delta t \left( \hat{\mathcal{F}}_{i+1/2}^{n+1-l}-\hat{\mathcal{F}}_{i-1/2}^{n+1-l} \right) \phi _{i}^{n}\le 0.
			\end{equation}
			Here, $T_{\phi,m}:=\{n\in T_\phi: n\ge m-1 \}$. For each $1\le l\le m$, The first term satisfies
			\begin{align*}
					\sum_{i\in X_\phi}&\sum_{n\in T_{\phi,m}}\Delta x\left( \tilde{\mathcal{U}}_{i}^{n+1}-\tilde{\mathcal{U}}_{i}^{n+1-l} \right) \phi _{i}^{n}=\sum_{i\in X_\phi}{\sum_{n\in T_{\phi,m}}{\sum_{s=1}^{l}{\Delta x\left( \tilde{\mathcal{U}}_{i}^{n+1-l+s}-\tilde{\mathcal{U}}_{i}^{n-l+s} \right) \phi _{i}^{n}}}}
					\\
					=&\sum_{i\in X_\phi}{\sum_{s=1}^{l}{\sum_{n\in T_{\phi,m}}{\Delta x\left( \tilde{\mathcal{U}}_{i}^{n+1-l+s}-\tilde{\mathcal{U}}_{i}^{n-l+s} \right) \phi _{i}^{n}}}}
					\\
					=&-\sum_{i\in X_\phi}{\sum_{s=1}^{l}{\Delta x\tilde{\mathcal{U}}_{i}^{m-1-l+s}\phi _{i}^{m-1}}}-\sum_{i\in X_\phi}{\sum_{s=1}^{l}{\sum_{n\in T_{\phi,m}}{\Delta x\tilde{\mathcal{U}}_{i}^{n+1-l+s}\left( \phi _{i}^{n+1}-\phi _{i}^{n} \right)}}}
					\\
					=&-\sum_{i\in X_\phi}{\sum_{s=1}^{l}{\Delta x\tilde{\mathcal{U}}_{i}^{0}\phi _{i}^{0}}}-\sum_{i\in X_\phi}{\sum_{s=1}^{l}{\sum_{n\in T_{\phi,m}}{\Delta x\tilde{\mathcal{U}}_{i}^{n+1-l+s}\left( \phi _{i}^{n+1-l+s}-\phi _{i}^{n-l+s} \right)}}}
                    \\ &+o\left( 1 \right) 
					\\
					=&-\sum_{i\in X_\phi}{\sum_{s=1}^{l}{\Delta x\tilde{\mathcal{U}}_{i}^{0}\phi _{i}^{0}}}-\sum_{i\in X_\phi}{\sum_{s=1}^{l}{\sum_{n\in T_{\phi}}{\Delta x\tilde{\mathcal{U}}_{i}^{n+1}\left( \phi _{i}^{n+1}-\phi _{i}^{n} \right)}}}+o\left( 1 \right) 
					\\
					=&\,l\left( -\sum_{i\in X_\phi}{\Delta x\tilde{\mathcal{U}}_{i}^{0}\phi _{i}^{0}}-\sum_{i\in X_\phi}{\sum_{n\in T_{\phi}}{\Delta x\tilde{\mathcal{U}}_{i}^{n+1}\left( \phi _{i}^{n+1}-\phi _{i}^{n} \right)}} \right) +o\left( 1 \right) 
					\\
					\to&\, l\left( -\int_{\mathbb R}{\mathcal{U} \left( u_0\left( x \right) \right) \phi \left( x,0 \right) \mathrm{d}x}-\int_{\mathbb R^+}{\int_{\mathbb R}{\mathcal{U} \left( u^{\star} \right) \phi _t\,\mathrm{d}x\mathrm{d}t}} \right) .
			\end{align*}
			Likewise, for each $1\le l\le m$, the second term satisfies
				$$\sum_{i\in X_\phi}\sum_{n\in T_{\phi,m}}\Delta t\left( \hat{\mathcal{F}}_{i+1/2}^{n+1-l}-\hat{\mathcal{F}}_{i-1/2}^{n+1-l} \right) \phi _{i}^{n}\to -\int_{\mathbb R^+}\int_\mathbb R \mathcal F(u^\star)\phi_x\,\mathrm dx\mathrm dt.$$
			Hence, we have 
			\begin{equation}\label{eq:MSnew}
				\begin{aligned}\left(\sum\limits_{l=1}^ml\alpha_l\right)&\left(-\int_{\mathbb R} \mathcal U(u_0(x))\phi(x,0)\mathrm dx-\int_{\mathbb R^+}\int_\mathbb R\mathcal U(u^\star)\phi_t\,\mathrm dx\mathrm dt \right)
					\\&-\left(\sum\limits_{l=1}^m\beta_l\right)\left(\int_{\mathbb R^+}\int_\mathbb R \mathcal F(u^\star)\phi_x\,\mathrm dx\mathrm dt\right)\le 0.
				\end{aligned}
			\end{equation}
			Recall that the coefficients $\alpha_l,\beta_l$ of multistep method \eqref{eq:MS} satisfy \cite{gottlieb2001strong}
			$$
			\sum\limits_{l=1}^m l\alpha_l=\sum\limits_{l=1}^{m}\beta_l.\label{eq:coeff2}
			$$
			By using this equation and the fact that $\alpha_l\ge 0$, we finally get \eqref{eq:ESweak1}. 
		\end{proof}

	\bibliographystyle{siamplain}
	\bibliography{references}
\end{document}


\maketitle


\section{Entropy conservative flux and entropy stable flux}\label{app}

\subsection{Entropy conservative flux}

For the MHD equations, Chandrashekar and Klingengberg \cite{chandrashekar2016entropy} suggested the following entropy conservative flux:
\begin{align*}
\mathbf{F}_{1}^{S}&=\hat{\rho}\bar{u}_x,
\\
\mathbf{F}_{2}^{S}&=\frac{\bar{\rho}}{2\bar{\beta}}+\bar{u}_x\mathbf{F}_{1}^{S}+\frac{1}{2}\overline{\left\| \mathbf{B} \right\| ^2}-\bar{B}_x\bar{B}_x,
\\
\mathbf{F}_{3}^{S}&=\bar{u}_y\mathbf{F}_{1}^{S}-\bar{B}_x\bar{B}_y,
\\
\mathbf{F}_{4}^{S}&=\bar{u}_z\mathbf{F}_{1}^{S}-\bar{B}_x\bar{B}_z,
\\
\mathbf{F}_{6}^{S}&=0,
\\
\mathbf{F}_{7}^{S}&=\frac{1}{\bar{\beta}}\left( \overline{\beta u_x}\bar{B}_y-\overline{\beta u_y}\bar{B}_x \right), 
\\
\mathbf{F}_{8}^{S}&=\frac{1}{\bar{\beta}}\left( \overline{\beta u_x}\bar{B}_z-\overline{\beta u_z}\bar{B}_x \right),
\\
\mathbf{F}_{5}^{S}&=\frac{1}{2}\left[ \frac{1}{\left( \gamma -1 \right) \hat{\beta}}-\overline{\left\| \mathbf{u} \right\| ^2} \right] \mathbf{F}_{1}^{S}+\bar{u}_x\mathbf{F}_{2}^{S}+\bar{u}_y\mathbf{F}_{3}^{S}+\bar{u}_z\mathbf{F}_{4}^{S}
\\
&+\bar{B}_x\mathbf{F}_{6}^{S}+\bar{B}_y\mathbf{F}_{7}^{S}+\bar{B}_z\mathbf{F}_{8}^{S}-\frac{1}{2}\bar{u}_x\overline{\left\| \mathbf{B} \right\| ^2}+\left( \bar{u}_x\bar{B}_x+\bar{u}_y\bar{B}_y+\bar{u}_z\bar{B}_z \right) \bar{B}_x, 
\end{align*}
where $\overline{(a,b)}=(a+b)/2$ is the arithmetic average, and $\hat{(\cdot)}$ is the logarithmic average of two strictly positive quantities as
$$ \hat\alpha = \frac{\alpha_r-\alpha_l}{\ln\alpha_r-\ln\alpha_l}. $$
The formula of $\mathbf G^S$ is similar.

\subsection{Entropy stable flux}

For entropy stable fluxes, it is well known that the Godunov
flux based on exact Riemann solvers is by definition entropy stable. Meanwhile, many approximate Riemann solvers, such as the HLL
or Lax–Friedrichs fluxes are also entropy stable if the estimates of left and right local wave speed $S_L,\ S_R$ satisfy $S_L\le S_L^{real},\ S_R\ge S_R^{real}$, where $S_{L,R}^{real}$ are the real minimal and maximal wave speed of the Riemann problem at the interface \cite{chen2017entropy}. In particular, the HLL flux is given by
\begin{equation*}
\hat{\mathbf{F}}^{HLL}\left( \mathbf{U}_L,\mathbf{U}_R \right) =\left\{\begin{aligned}
	&\mathbf{F}(\mathbf{U}_L),\quad S_L\ge 0,\\
	&\frac{S_R\mathbf{F}_L-S_L\mathbf{F}_R+S_RS_L\left( \mathbf{U}_R-\mathbf{U}_L \right)}{S_R-S_L},\quad S_L<0<S_R,\\
	&\mathbf{F}(\mathbf{U}_R),\quad S_R\le 0,\\
\end{aligned}\right.
\end{equation*}
If we denote the non-positive part of $S_L$ by $\mathcal S_L=\min\{S_L,0\}$, and the non-negative part of $S_R$ by $\mathcal S_R=\max\{S_R,0\}$, then the HLL flux can be written by
\begin{equation}\label{eq:HLL} \hat{\mathbf F}^{HLL}(\mathbf U_L,\mathbf U_R) = \frac{\mathcal S_R\mathbf F_L-\mathcal S_L\mathbf F_R+\mathcal S_R\mathcal S_L(\mathbf U_R-\mathbf U_L)}{\mathcal S_R-\mathcal S_L}.
\end{equation}

However, the computation of $S_L$ and $S_R$ is non-trivial.  To address this, Toro recommends the two-rarefaction approximate in \cite{toro2013riemann}, then Guermond and Popov \cite{guermond2016fast} prove that the two-rarefaction approximated wave speeds indeed provide the correct bounds for Euler equations with $1<\gamma\le 5/3$. 
However, for MHD system, the Riemann problem becomes more complex and is difficult to analyze. In \cite{bouchut2007multiwave, bouchut2010multiwave}, another ``relaxation" approach is used, and the 3-wave and 7-wave approximate Riemann solver are designed, which are proved to be entropy stable. Further, if we simply use the HLL solver \eqref{eq:HLL}, but the wavespeeds $S_L,\ S_R$ are chosen by the above 3-wave solver, then it is also entropy stable \cite{bouchut2010multiwave}. 

Specifically, for two given state $\mathbf U_L,\mathbf U_R$, the explicit formula of $S_L(\mathbf U_L,\mathbf U_R)$ and $S_R(\mathbf U_L,\mathbf U_R)$ along $x$-direction are given by 
$$
S_L=u_L-\mathcal C_L,\quad   S_R=u_R+\mathcal C_R,
$$
where
$$
\mathcal{C} _L=c_{f,L}^{0}+\alpha \left( \left( u_L-u_R \right) _++\frac{\left( p_R-p_L \right) _+}{\rho _Lc_{f,L}+\rho _Rc_{f,R}} \right), 
$$
$$
\mathcal{C} _R=c_{f,R}^{0}+\alpha \left( \left( u_L-u_R \right) _++\frac{\left( p_R-p_L \right) _+}{\rho _Lc_{f,L}+\rho _Rc_{f,R}} \right), 
$$
$$
c_{f,L}^{0}=\left\{ \frac{1}{2}\left( a_L+\frac{\left\| \mathbf{B}_L \right\| ^2}{\rho _Lx_L}+\sqrt{\left( a_L+\frac{\left\| \mathbf{B}_L \right\| ^2}{\rho _Lx_L} \right) ^2-4a_L\frac{B_{x,L}^{2}}{\rho _Lx_L}} \right) \right\} ^{1/2},
$$
$$
c_{f,R}^{0}=\left\{ \frac{1}{2}\left( a_R+\frac{\left\| \mathbf{B}_R \right\| ^2}{\rho _Rx_R}+\sqrt{\left( a_R+\frac{\left\| \mathbf{B}_R \right\| ^2}{\rho _Rx_R} \right) ^2-4a_R\frac{B_{x,R}^{2}}{\rho _Rx_R}} \right) \right\} ^{1/2},
$$and$$
\begin{aligned}
X_L=&\frac{1}{c_{f,L}}\left[ \left( u_L-u_R \right) _++\frac{\left( p_R-p_L \right) _+}{\rho _Lc_{f,L}+\rho _Rc_{f,R}} \right],
\\
X_R=&\frac{1}{c_{f,R}}\left[ \left( u_L-u_R \right) _++\frac{\left( p_R-p_L \right) _+}{\rho _Lc_{f,L}+\rho _Rc_{f,R}} \right], 
\\
x_L=&1-\frac{X_L}{1+\alpha X_L},\quad x_R=1-\frac{X_R}{1+\alpha X_R}
\end{aligned}$$
with $a=\sqrt{\gamma p/\rho},\ \alpha=(\gamma + 1)/2,\ (\cdot)_+=\max\{\cdot,0\}$, and $c_{f,L},c_{f,R}$ denotes the largest eigenvalue of $\partial\mathbf F(\mathbf U)/\partial \mathbf U$ at $\mathbf U=\mathbf U_L,\mathbf U_R$, respectively.

\section{Two-dimensional HLL Riemann solver}\label{app2} 

For four given stages  
at a vertex $\Lambda$, the two-dimensional HLL flux 
$\tilde{E}_z|_{\Lambda}=\tilde{E}_z\left( 
\mathbf{U}_{LD},\mathbf{U}_{LU},\mathbf{U}_{RD},\mathbf{U}_{RU} 
\right) 
$ is calculated by following \cite{chandrashekar2020constraint}. To help with the presentation, we illustrate the notations of states around a vertex $\Lambda$ in Fig \ref{figflux}.

\begin{figure}[htb!]
    \centering
    \includegraphics[width=0.35\linewidth]{figure/fluxnew}
    \caption{The notations around a vertex $\Lambda$.}
     \label{figflux}
\end{figure}

First, the wave speeds are chosen by
$$\begin{aligned}
S_R|_\Lambda&=\max \left\{ S_R\left( \mathbf{U}_{LU},\mathbf{U}_{RU} \right) ,S_R\left( \mathbf{U}_{LD},\mathbf{U}_{RD} \right) \right\} ,
\\S_L|_\Lambda&=\min \left\{ S_L\left( \mathbf{U}_{LU},\mathbf{U}_{RU} \right) ,S_L\left( \mathbf{U}_{LD},\mathbf{U}_{RD}\right)  \right\} ,
\\
S_U|_\Lambda&=\max \left\{ S_R\left( \mathbf{U}_{RD},\mathbf{U}_{RU} \right) ,S_R\left( \mathbf{U}_{LD},\mathbf{U}_{LU} \right) \right\} ,
\\S_D|_\Lambda&=\min \left\{ S_L\left( \mathbf{U}_{RD},\mathbf{U}_{RU} \right) ,S_R\left( \mathbf{U}_{LD},\mathbf{U}_{LU} \right) \right\}\end{aligned}
$$
to match the wave speeds of the nodal DG scheme 
in the cell. Denote $$\mathcal S_R=\max\{S_R,0\},\quad \mathcal S_U=\max\{S_U,0\}, \quad \mathcal S_L=\min\{S_L,0\},\quad  \mathcal S_D=\min\{S_D,0\},$$
then the value of $\tilde E_z$ is given by
$$\begin{aligned}
\tilde E_z^{HLL}&=\frac{1}{4}\left( E_{z}^{*,R}+E_{z}^{*,L}+E_{z}^{*,U}+E_{z}^{*,D} \right) 
\\
&\quad -\frac{1}{4}\mathcal{S} _U\left( B_{x}^{U,*}-B_{x}^{**} \right) -\frac{1}{4}\mathcal{S} _D\left( B_{x}^{D,*}-B_{x}^{**} \right) 
\\
&\quad +\frac{1}{4}\mathcal{S} _R\left( B_{y}^{R,*}-B_{y}^{**} \right) +\frac{1}{4}\mathcal{S} _L\left( B_{y}^{L,*}-B_{y}^{**} \right), 
\end{aligned}
$$
where
$$
E_{z}^{*,R}=\mathbf{\hat{G}}_{6}^{HLL}\left( \mathbf{U}_{RD},\mathbf{U}_{RU} \right) , \quad 
E_{z}^{*,L}=\mathbf{\hat{G}}_{6}^{HLL}\left( \mathbf{U}_{LD},\mathbf{U}_{LU} \right), 
$$
$$
E_{z}^{*,U}=-\mathbf{\hat{F}}_{7}^{HLL}\left( \mathbf{U}_{LU},\mathbf{U}_{RU} \right) , \quad 
E_{z}^{*,D}=-\mathbf{\hat{F}}_{7}^{HLL}\left( \mathbf{U}_{LD},\mathbf{U}_{RD} \right), 
$$
and
$$\begin{aligned}
B_{x}^{**}=\frac{1}{2\Delta \mathcal S}&\left[ 
	2\mathcal{S} _R\mathcal{S} _UB_{x}^{RU}-2\mathcal{S} _L\mathcal{S} _UB_{x}^{LU}-2\mathcal{S} _R\mathcal{S} _DB_{x}^{RD}+2\mathcal{S} _L\mathcal{S} _DB_{x}^{LD}\right.
 \\ &-\mathcal{S} _R\left( E_{z}^{RU}-E_{z}^{RD} \right) +\mathcal{S} _L\left( E_{z}^{LU}-E_{z}^{LD} \right) 
 \\ &\left.-\left( \mathcal{S} _R-\mathcal{S} _L \right) \left( E_{z}^{*,U}-E_{z}^{*,D} \right) \right], 
 \end{aligned}
$$
$$\begin{aligned}
B_{y}^{**}=\frac{1}{2\Delta\mathcal S}&\left[ 
	2\mathcal{S} _R\mathcal{S} _UB_{y}^{RU}-2\mathcal{S} _L\mathcal{S} _UB_{y}^{LU}-2\mathcal{S} _R\mathcal{S} _DB_{y}^{RD}+2\mathcal{S} _L\mathcal{S} _DB_{y}^{LD}\right.\\
	&+\mathcal{S} _U\left( E_{z}^{RU}-E_{z}^{LU} \right) -\mathcal{S} _D\left( E_{z}^{RD}-E_{z}^{LD} \right)\\
	&\left.+\left( \mathcal{S} _U-\mathcal{S} _D \right) \left( E_{z}^{*,R}-E_{z}^{*,L} \right) \right], \end{aligned}
$$
where $\Delta \mathcal S=(\mathcal S_R-\mathcal S_L)(\mathcal S_U-\mathcal S_D)$.

\section{The reconstruction for small $k$}\label{app4} We give a brief discussion about the LS reconstruction problem in Section 3.3 for small $k$. We rewrite the linear system (3.18) as
$$
\mathbf{G}\left[ \begin{array}{c}
	\mathbf{B}^{rec}\\
	\boldsymbol{\lambda }\\
\end{array} \right] =\left[ \begin{array}{c}
	M_2\mathbf{\tilde{\mathbf B}}\\
	\mathbf{b}_1\\
\end{array} \right], 
$$
then $\mathbf G^{-1}$ can be given by
$$
\mathbf{G}^{-1}=\left[ \begin{matrix}
	M^{-1}-M^{-1}A_1^T\mathbf{S}A_1M^{-1}&		M^{-1}A_1^T\mathbf{S}\\
	-\mathbf{S}A_1M^{-1}&		\mathbf{S}\\
\end{matrix} \right], 
$$
where $\mathbf S=(A_1MA_1^T)^{-1}$ is the Schur complement of $M$. In the following, we will assume $\Delta x=\Delta y$.

To help understand, we first consider the simplest case $k=0$, and the 2 points Gauss-Lobatto quadrature is used. Now we have
$$
M=\left[ \begin{matrix}
	1&		0\\
	0&		1\\
\end{matrix} \right] =I,\quad D=\left[ \begin{matrix}
	0.5&		-0.5\\
	0.5&		-0.5\\
\end{matrix} \right]. 
$$
Assume $$ b_x^+ = a_0^+,\quad b_x^- = a_0^-,\quad b_y^+ = b_0^+,\quad b_y^- = b_0^-. $$
Then, the original linear system of reconstruction (3.16) is
\begin{equation}\label{eq:P0-1}
\left[ \begin{matrix}
	-1&		1&		0&		0&		-1&		0&		1&		0\\
	-1&		1&		0&		0&		0&		-1&		0&		1\\
	0&		0&		-1&		1&		-1&		0&		1&		0\\
	0&		0&		-1&		1&		0&		-1&		0&		1\\
	0&		1&		0&		0&		0&		0&		0&		0\\
	0&		0&		0&		1&		0&		0&		0&		0\\
	1&		0&		0&		0&		0&		0&		0&		0\\
	0&		0&		1&		0&		0&		0&		0&		0\\
	0&		0&		0&		0&		0&		0&		1&		0\\
	0&		0&		0&		0&		0&		0&		0&		1\\
	0&		0&		0&		0&		1&		0&		0&		0\\
	0&		0&		0&		0&		0&		1&		0&		0\\
\end{matrix} \right] \left[ \begin{matrix}
	B_{x,00}\\
	B_{x,10}\\
	B_{x,01}\\
	B_{x,11}\\
	B_{y,00}\\
	B_{y,10}\\
	B_{y,01}\\
	B_{y,11}\\
\end{matrix} \right] =\left[ \begin{matrix}
	0\\
	0\\
	0\\
	0\\
	a_{0}^{+}\\
	a_{0}^{+}\\
	a_{0}^{-}\\
	a_{0}^{-}\\
	b_{0}^{+}\\
	b_{0}^{+}\\
	b_{0}^{-}\\
	b_{0}^{-}\\
\end{matrix} \right],
\end{equation}
which is a $12\times 8$ linear system. To simplify the form, we have multiplied a factor 2 for the first four rows. It is notable that the last 8 rows of $A$ already form a full rank $8\times 8$ matrix, hence the system \eqref{eq:P0-1} has at most one solution. Since the number of equations is larger than DOFs, it seems that there may have ``\,$0 = d$\," rows with $d\ne 0$ in the simplified stepped system, which will lead to the system non-solvable. However, applying some elementary row operations to $A$ yields

$$
\left[ \begin{matrix}
	0&		0&		0&		0&		0&		0&		0&		0\\
	0&		0&		0&		0&		0&		0&		0&		0\\
	0&		0&		0&		0&		0&		0&		0&		0\\
	0&		0&		0&		0&		0&		0&		0&		0\\
	0&		1&		0&		0&		0&		0&		0&		0\\
	0&		0&		0&		1&		0&		0&		0&		0\\
	1&		0&		0&		0&		0&		0&		0&		0\\
	0&		0&		1&		0&		0&		0&		0&		0\\
	0&		0&		0&		0&		0&		0&		1&		0\\
	0&		0&		0&		0&		0&		0&		0&		1\\
	0&		0&		0&		0&		1&		0&		0&		0\\
	0&		0&		0&		0&		0&		1&		0&		0\\
\end{matrix} \right] \left[ \begin{matrix}
	B_{x,00}\\
	B_{x,10}\\
	B_{x,01}\\
	B_{x,11}\\
	B_{y,00}\\
	B_{y,10}\\
	B_{y,01}\\
	B_{y,11}\\
\end{matrix} \right] =\left[ \begin{matrix}
	a_{0}^{-}-a_{0}^{+}+b_{0}^{-}-b_{0}^{+}\\
	a_{0}^{-}-a_{0}^{+}+b_{0}^{-}-b_{0}^{+}\\
	a_{0}^{-}-a_{0}^{+}+b_{0}^{-}-b_{0}^{+}\\
	a_{0}^{-}-a_{0}^{+}+b_{0}^{-}-b_{0}^{+}\\
	a_{0}^{+}\\
	a_{0}^{+}\\
	a_{0}^{-}\\
	a_{0}^{-}\\
	b_{0}^{+}\\
	b_{0}^{+}\\
	b_{0}^{-}\\
	b_{0}^{-}\\
\end{matrix} \right]. 
$$
Hence, the system is solvable if and only if the cell-average constraint is satisfied, which indicates that $$a_0^+-a_0^-+b_0^+-b_0^-=0.$$
Then, the unique solution of \eqref{eq:P0-1} is
\begin{equation}\begin{aligned}\label{eq:P0-solution}
B_{x,00}=B_{x,01}=a_{0}^{-},&\quad B_{x,10}=B_{x,11}=a_{0}^{+},
\\
B_{y,00}=B_{y,10}=b_{0}^{-},&\quad B_{y,01}=B_{y,11}=b_{0}^{+}.
\end{aligned}
\end{equation}
On the other hand, if we directly consider the system (3.18), then we can see $A_1$ is a $8\times 8$ full rank matrix. Since $M=I$, we can derive that $\mathbf S=(A_1^{T})^{-1}A_1^{-1}$, and
$$
\mathbf{G}^{-1}=\left[ \begin{matrix}
	O&		A_1^{-1}\\
	-\left( A_1^T \right) ^{-1}&		(A_1^T)^{-1}A_1^{-1}\\
\end{matrix} \right]. 
$$
Therefore, we have
$$
\left[ \begin{array}{c}
	\mathbf{B}^{rec}\\
	\mathbf{\lambda }\\
\end{array} \right] =\left[ \begin{matrix}
	O&		A_1^{-1}\\
	-\left( A_1^T \right) ^{-1}&		(A_1^T)^{-1}A_1^{-1}\\
\end{matrix} \right] \left[ \begin{array}{c}
	\mathbf{B}^{\mathrm{DG}}\\
	\mathbf{b}_1\\
\end{array} \right], 
$$
so $\mathbf B^{rec}= A_1^{-1}\mathbf b_1$, it is also the solution of $A\mathbf B=\mathbf b$, thus we recover \eqref{eq:P0-solution}. This is also consistent with the conclusion in \cite{balsara2021globally}.

However, for $k> 0$, the system has become very complicated.  When $k=1$, the size of $A$ is a $21\times 18$, and the size of $\mathbf G$ is $35\times 35$. For $k=2$, the sizes are respectively $32\times 32$ and $60\times 60$. For $k=3$, the sizes are respectively $ 45\times 50 $ and $91\times 91$. Therefore, it is difficult to study the underlying mechanism or give the explicit formulation like \eqref{eq:P0-solution} for the reconstruction equation with general $k$. In Fig \ref{figmat}, we present the structure of nonzero elements in $\mathbf G^{-1}$ for $k=1,2,3$ as a reference. 

\begin{figure}[htbp!]
    \centering
    \subfigure[$k=1$.]{
        \includegraphics[width=0.31\linewidth]{figure/P1rec}}
    \subfigure[$k=2$.]{
        \includegraphics[width=0.31\linewidth]{figure/P2rec}}
    \subfigure[$k=3$.]{
        \includegraphics[width=0.31\linewidth]{figure/P3rec}}
    \caption{The structure of nonzero elements in $\mathbf G^{-1}$. }
    \label{figmat}
\end{figure}

In addition, we give the full express of $\mathbf G^{-1}$ for $k=1$. The first 5 columns of $\mathbf G^{-1}$ are\newpage 
 $$\left[ \begin{matrix} 
 0 &0 &0 &0 &0 \\ 
0 &9/16 &0 &0 &0 \\ 
0 &0 &0 &0 &0 \\ 
0 &0 &0 &0 &0 \\ 
0 &0 &0 &0 &0 \\ 
0 &0 &0 &0 &0 \\ 
0 &0 &0 &0 &0 \\ 
0 &-9/16 &0 &0 &0 \\ 
0 &0 &0 &0 &0 \\ 
0 &0 &0 &0 &0 \\ 
0 &0 &0 &0 &0 \\ 
0 &0 &0 &0 &0 \\ 
0 &-9/16 &0 &0 &0 \\ 
0 &0 &0 &0 &0 \\ 
0 &9/16 &0 &0 &0 \\ 
0 &0 &0 &0 &0 \\ 
0 &0 &0 &0 &0 \\ 
0 &0 &0 &0 &0 \\ 
446/10487 &-361/4426 &446/10487 &0 &0 \\ 
0 &-407/5879 &1345/18033 &686/18395 &-407/5879 \\ 
194/2497 &-811/8786 &158/7263 &2699/54280 &-811/8786 \\ 
-293/5856 &482/5241 &-293/5856 &293/2928 &-403/2191 \\ 
31/13608 &-236/56361 &31/13608 &-31/6804 &225/26867 \\ 
-195/1351 &0 &195/1351 &-195/1351 &0 \\ 
1262/6927 &0 &-1262/6927 &-1217/3340 &0 \\ 
5143/432011 &0 &-5143/432011 &-10286/432011 &0 \\ 
-3599/5453 &0 &937/6487 &-115/589 &0 \\ 
0 &0 &-137/1263 &2539/11975 &0 \\ 
0 &0 &336/1285 &609/3007 &0 \\ 
-99/90076 &0 &-1471/2543 &513/7487 &0 \\ 
-338/963 &-265/724 &-338/963 &0 &0 \\ 
152/7003 &181/7664 &152/7003 &-304/7003 &-181/3832 \\ 
-883/3115 &-223/723 &-883/3115 &847/1494 &446/723 \\ 
89/1024 &884/2323 &213/499 &737/2869 &884/2323 \\ 
-413/985 &-469/1397 &-203/5977 &-526/2321 &-469/1397 \\ 
 \end{matrix} \right]. $$ 

\newpage
Column 6 to 10 of $\mathbf G^{-1}$ are
  $$\left[ \begin{matrix} 
 0 &0 &0 &0 &0 \\ 
0 &0 &-9/16 &0 &0 \\ 
0 &0 &0 &0 &0 \\ 
0 &0 &0 &0 &0 \\ 
0 &0 &0 &0 &0 \\ 
0 &0 &0 &0 &0 \\ 
0 &0 &0 &0 &0 \\ 
0 &0 &9/16 &0 &0 \\ 
0 &0 &0 &0 &0 \\ 
0 &0 &0 &0 &0 \\ 
0 &0 &0 &0 &0 \\ 
0 &0 &0 &0 &0 \\ 
0 &0 &9/16 &0 &0 \\ 
0 &0 &0 &0 &0 \\ 
0 &0 &-9/16 &0 &0 \\ 
0 &0 &0 &0 &0 \\ 
0 &0 &0 &0 &0 \\ 
0 &0 &0 &0 &0 \\ 
0 &-446/10487 &361/4426 &-446/10487 &446/10487 \\ 
686/18395 &1345/18033 &-407/5879 &0 &-158/7263 \\ 
2699/54280 &158/7263 &-811/8786 &194/2497 &1345/18033 \\ 
293/2928 &-293/5856 &482/5241 &-293/5856 &-31/13608 \\ 
-31/6804 &31/13608 &-236/56361 &31/13608 &-293/5856 \\ 
195/1351 &-195/1351 &0 &195/1351 &-195/1351 \\ 
1217/3340 &1262/6927 &0 &-1262/6927 &-5143/432011 \\ 
10286/432011 &5143/432011 &0 &-5143/432011 &1262/6927 \\ 
51/881 &739/2742 &0 &-96/3349 &491/831 \\ 
723/6241 &5078/11975 &0 &946/2781 &-1395/8672 \\ 
-162/1069 &625/1543 &0 &-2033/3601 &-341/1901 \\ 
-809/2711 &255/1846 &0 &-77/4190 &524/3107 \\ 
0 &338/963 &265/724 &338/963 &-338/963 \\ 
-304/7003 &152/7003 &181/7664 &152/7003 &-883/3115 \\ 
847/1494 &-883/3115 &-223/723 &-883/3115 &-152/7003 \\ 
737/2869 &213/499 &884/2323 &89/1024 &-203/5977 \\ 
-526/2321 &-203/5977 &-469/1397 &-413/985 &-213/499 \\ 
 \end{matrix} \right]. $$ 

\newpage

Column 11 to 15 of $\mathbf G^{-1}$ are
 $$\left[ \begin{matrix} 
 0 &0 &0 &0 &0 \\ 
0 &0 &-9/16 &0 &9/16 \\ 
0 &0 &0 &0 &0 \\ 
0 &0 &0 &0 &0 \\ 
0 &0 &0 &0 &0 \\ 
0 &0 &0 &0 &0 \\ 
0 &0 &0 &0 &0 \\ 
0 &0 &9/16 &0 &-9/16 \\ 
0 &0 &0 &0 &0 \\ 
0 &0 &0 &0 &0 \\ 
0 &0 &0 &0 &0 \\ 
0 &0 &0 &0 &0 \\ 
0 &0 &9/16 &0 &-9/16 \\ 
0 &0 &0 &0 &0 \\ 
0 &0 &-9/16 &0 &9/16 \\ 
0 &0 &0 &0 &0 \\ 
0 &0 &0 &0 &0 \\ 
0 &0 &0 &0 &0 \\ 
0 &-446/10487 &-361/4426 &0 &361/4426 \\ 
-2699/54280 &-194/2497 &811/8786 &811/8786 &811/8786 \\ 
686/18395 &0 &-407/5879 &-407/5879 &-407/5879 \\ 
31/6804 &-31/13608 &236/56361 &-225/26867 &236/56361 \\ 
293/2928 &-293/5856 &482/5241 &-403/2191 &482/5241 \\ 
-195/1351 &-195/1351 &0 &0 &0 \\ 
10286/432011 &-5143/432011 &0 &0 &0 \\ 
-1217/3340 &1262/6927 &0 &0 &0 \\ 
551/2084 &-169/2723 &0 &0 &0 \\ 
-823/16086 &301/5142 &0 &0 &0 \\ 
-3234/139709 &461/3464 &0 &0 &0 \\ 
-572/2423 &-1841/2873 &0 &0 &0 \\ 
0 &338/963 &-265/724 &0 &265/724 \\ 
847/1494 &-883/3115 &-223/723 &446/723 &-223/723 \\ 
304/7003 &-152/7003 &-181/7664 &181/3832 &-181/7664 \\ 
-526/2321 &-413/985 &-469/1397 &-469/1397 &-469/1397 \\ 
-737/2869 &-89/1024 &-884/2323 &-884/2323 &-884/2323 \\ 
 \end{matrix} \right]. $$ 

\newpage
Column 16 to 20 of $G^{-1}$ are
 $$\left[ \begin{matrix} 
 0 &0 &0 &446/10487 &0 \\ 
0 &0 &0 &-361/4426 &-407/5879 \\ 
0 &0 &0 &446/10487 &1345/18033 \\ 
0 &0 &0 &0 &686/18395 \\ 
0 &0 &0 &0 &-407/5879 \\ 
0 &0 &0 &0 &686/18395 \\ 
0 &0 &0 &-446/10487 &1345/18033 \\ 
0 &0 &0 &361/4426 &-407/5879 \\ 
0 &0 &0 &-446/10487 &0 \\ 
0 &0 &0 &446/10487 &-158/7263 \\ 
0 &0 &0 &0 &-2699/54280 \\ 
0 &0 &0 &-446/10487 &-194/2497 \\ 
0 &0 &0 &-361/4426 &811/8786 \\ 
0 &0 &0 &0 &811/8786 \\ 
0 &0 &0 &361/4426 &811/8786 \\ 
0 &0 &0 &446/10487 &-194/2497 \\ 
0 &0 &0 &0 &-2699/54280 \\ 
0 &0 &0 &-446/10487 &-158/7263 \\ 
446/10487 &0 &-446/10487 &-23/1712 &0 \\ 
-194/2497 &-2699/54280 &-158/7263 &0 &-287/6896 \\ 
0 &686/18395 &1345/18033 &0 &0 \\ 
-31/13608 &31/6804 &-31/13608 &0 &-94/5141 \\ 
-293/5856 &293/2928 &-293/5856 &0 &199/7414 \\ 
195/1351 &195/1351 &195/1351 &0 &0 \\ 
5143/432011 &-10286/432011 &5143/432011 &0 &0 \\ 
-1262/6927 &1217/3340 &-1262/6927 &0 &0 \\ 
1022/16221 &31/2753 &-251/6200 &0 &67/10675 \\ 
357/604 &401/8908 &-731/1459 &0 &-27/6463 \\ 
646/2335 &961/2904 &3219/8357 &0 &63/14102 \\ 
222/2929 &120/917 &3951/21250 &0 &56/50269 \\ 
-338/963 &0 &338/963 &-90/2261 &0 \\ 
-883/3115 &847/1494 &-883/3115 &0 &-65/1076 \\ 
-152/7003 &304/7003 &-152/7003 &0 &144/3743 \\ 
-413/985 &-526/2321 &-203/5977 &0 &199/1660 \\ 
-89/1024 &-737/2869 &-213/499 &0 &62/2495 \\ 
 \end{matrix} \right]. $$ 

 \newpage
 The 21 to 25 columns of $\mathbf G^{-1}$ are
  $$\left[ \begin{matrix} 
 0 &0 &0 &446/10487 &0 \\ 
0 &0 &0 &-361/4426 &-407/5879 \\ 
0 &0 &0 &446/10487 &1345/18033 \\ 
0 &0 &0 &0 &686/18395 \\ 
0 &0 &0 &0 &-407/5879 \\ 
0 &0 &0 &0 &686/18395 \\ 
0 &0 &0 &-446/10487 &1345/18033 \\ 
0 &0 &0 &361/4426 &-407/5879 \\ 
0 &0 &0 &-446/10487 &0 \\ 
0 &0 &0 &446/10487 &-158/7263 \\ 
0 &0 &0 &0 &-2699/54280 \\ 
0 &0 &0 &-446/10487 &-194/2497 \\ 
0 &0 &0 &-361/4426 &811/8786 \\ 
0 &0 &0 &0 &811/8786 \\ 
0 &0 &0 &361/4426 &811/8786 \\ 
0 &0 &0 &446/10487 &-194/2497 \\ 
0 &0 &0 &0 &-2699/54280 \\ 
0 &0 &0 &-446/10487 &-158/7263 \\ 
446/10487 &0 &-446/10487 &-23/1712 &0 \\ 
-194/2497 &-2699/54280 &-158/7263 &0 &-287/6896 \\ 
0 &686/18395 &1345/18033 &0 &0 \\ 
-31/13608 &31/6804 &-31/13608 &0 &-94/5141 \\ 
-293/5856 &293/2928 &-293/5856 &0 &199/7414 \\ 
195/1351 &195/1351 &195/1351 &0 &0 \\ 
5143/432011 &-10286/432011 &5143/432011 &0 &0 \\ 
-1262/6927 &1217/3340 &-1262/6927 &0 &0 \\ 
1022/16221 &31/2753 &-251/6200 &0 &67/10675 \\ 
357/604 &401/8908 &-731/1459 &0 &-27/6463 \\ 
646/2335 &961/2904 &3219/8357 &0 &63/14102 \\ 
222/2929 &120/917 &3951/21250 &0 &56/50269 \\ 
-338/963 &0 &338/963 &-90/2261 &0 \\ 
-883/3115 &847/1494 &-883/3115 &0 &-65/1076 \\ 
-152/7003 &304/7003 &-152/7003 &0 &144/3743 \\ 
-413/985 &-526/2321 &-203/5977 &0 &199/1660 \\ 
-89/1024 &-737/2869 &-213/499 &0 &62/2495 \\ 
 \end{matrix} \right]. $$ 

 \newpage
Column 26 to 30 of $\mathbf G^{-1}$ are
 $$\left[ \begin{matrix} 
 5143/432011 &-3599/5453 &0 &0 &-99/90076 \\ 
0 &0 &0 &0 &0 \\ 
-5143/432011 &937/6487 &-137/1263 &336/1285 &-1471/2543 \\ 
-10286/432011 &-115/589 &2539/11975 &609/3007 &513/7487 \\ 
0 &0 &0 &0 &0 \\ 
10286/432011 &51/881 &723/6241 &-162/1069 &-809/2711 \\ 
5143/432011 &739/2742 &5078/11975 &625/1543 &255/1846 \\ 
0 &0 &0 &0 &0 \\ 
-5143/432011 &-96/3349 &946/2781 &-2033/3601 &-77/4190 \\ 
1262/6927 &491/831 &-1395/8672 &-341/1901 &524/3107 \\ 
-1217/3340 &551/2084 &-823/16086 &-3234/139709 &-572/2423 \\ 
1262/6927 &-169/2723 &301/5142 &461/3464 &-1841/2873 \\ 
0 &0 &0 &0 &0 \\ 
0 &0 &0 &0 &0 \\ 
0 &0 &0 &0 &0 \\ 
-1262/6927 &1022/16221 &357/604 &646/2335 &222/2929 \\ 
1217/3340 &31/2753 &401/8908 &961/2904 &120/917 \\ 
-1262/6927 &-251/6200 &-731/1459 &3219/8357 &3951/21250 \\ 
0 &0 &0 &0 &0 \\ 
0 &67/10675 &-27/6463 &63/14102 &56/50269 \\ 
0 &-74/64347 &-111/20717 &-45/9634 &95/18561 \\ 
0 &53/12731 &-68/6223 &-45/20756 &152/19417 \\ 
0 &-90/9571 &21/29873 &-61/5987 &37/11708 \\ 
-149/3989 &0 &0 &0 &0 \\ 
0 &-49/1496 &110/8839 &81/1768 &145/3054 \\ 
-2/15 &429/14929 &-61/5587 &-91/2264 &-142/3409 \\ 
429/14929 &-797/5375 &107/6728 &56/3313 &182/5961 \\ 
-61/5587 &107/6728 &-297/2248 &-112/8103 &8/11573 \\ 
-91/2264 &56/3313 &-112/8103 &-425/2513 &-175/4867 \\ 
-142/3409 &182/5961 &8/11573 &-175/4867 &-1250/7501 \\ 
0 &0 &0 &0 &0 \\ 
0 &-332/6139 &59/9990 &-285/4963 &92/5557 \\ 
0 &131/5963 &-80/1293 &-155/11002 &277/6160 \\ 
0 &106/3253 &-178/6237 &87/4607 &179/15250 \\ 
0 &632/47779 &239/9859 &216/7151 &-429/16265 \\ 
 \end{matrix} \right]. $$

 \newpage
Column 31 to 35 of $\mathbf G^{-1}$ are
  $$\left[ \begin{matrix} 
 -338/963 &152/7003 &-883/3115 &89/1024 &-413/985 \\ 
-265/724 &181/7664 &-223/723 &884/2323 &-469/1397 \\ 
-338/963 &152/7003 &-883/3115 &213/499 &-203/5977 \\ 
0 &-304/7003 &847/1494 &737/2869 &-526/2321 \\ 
0 &-181/3832 &446/723 &884/2323 &-469/1397 \\ 
0 &-304/7003 &847/1494 &737/2869 &-526/2321 \\ 
338/963 &152/7003 &-883/3115 &213/499 &-203/5977 \\ 
265/724 &181/7664 &-223/723 &884/2323 &-469/1397 \\ 
338/963 &152/7003 &-883/3115 &89/1024 &-413/985 \\ 
-338/963 &-883/3115 &-152/7003 &-203/5977 &-213/499 \\ 
0 &847/1494 &304/7003 &-526/2321 &-737/2869 \\ 
338/963 &-883/3115 &-152/7003 &-413/985 &-89/1024 \\ 
-265/724 &-223/723 &-181/7664 &-469/1397 &-884/2323 \\ 
0 &446/723 &181/3832 &-469/1397 &-884/2323 \\ 
265/724 &-223/723 &-181/7664 &-469/1397 &-884/2323 \\ 
-338/963 &-883/3115 &-152/7003 &-413/985 &-89/1024 \\ 
0 &847/1494 &304/7003 &-526/2321 &-737/2869 \\ 
338/963 &-883/3115 &-152/7003 &-203/5977 &-213/499 \\ 
-90/2261 &0 &0 &0 &0 \\ 
0 &-65/1076 &144/3743 &199/1660 &62/2495 \\ 
0 &144/3743 &65/1076 &62/2495 &-199/1660 \\ 
0 &-17/3218 &573/3355 &298/4075 &-217/3070 \\ 
0 &573/3355 &17/3218 &-217/3070 &-298/4075 \\ 
0 &0 &0 &0 &0 \\ 
0 &0 &0 &0 &0 \\ 
0 &0 &0 &0 &0 \\ 
0 &-332/6139 &131/5963 &106/3253 &632/47779 \\ 
0 &59/9990 &-80/1293 &-178/6237 &239/9859 \\ 
0 &-285/4963 &-155/11002 &87/4607 &216/7151 \\ 
0 &92/5557 &277/6160 &179/15250 &-429/16265 \\ 
-202/581 &0 &0 &0 &0 \\ 
0 &-1189/1092 &0 &473/1203 &356/931 \\ 
0 &0 &-1189/1092 &-356/931 &473/1203 \\ 
0 &473/1203 &-356/931 &-1321/1514 &0 \\ 
0 &356/931 &473/1203 &0 &-1321/1514 \\ 
 \end{matrix} \right]. $$

\bibliographystyle{siamplain}
\bibliography{references}